\documentclass[english]{amsart}
\usepackage[english]{babel}

\usepackage[hidelinks]{hyperref}

\usepackage{verbatim}
\usepackage{amsfonts,amssymb,mathtools}
\usepackage{amsmath}
\usepackage{bbm}
\usepackage[foot]{amsaddr}

\usepackage{todonotes}

\usepackage{graphicx,xcolor}
\definecolor{green}{RGB}{89,169,58}
\definecolor{red}{RGB}{224,61,42}
\definecolor{blue}{RGB}{63,78,181}

\graphicspath{{fig/}}
\usepackage{subfigure,tikz}
\usetikzlibrary{arrows}

\usepackage[style=numeric,maxcitenames=2,maxbibnames=10,doi=false,isbn=false,url=false,natbib=true,giveninits=true,hyperref,bibencoding=utf8,backend=biber]{biblatex}
\AtEveryBibitem{%
  \clearfield{note}%
  \clearfield{issn}%
  \clearlist{language}%
  }

\newcommand{\N}{\ensuremath{\mathbb{N}}}
\newcommand{\R}{\ensuremath{\mathbb{R}}}

\newcommand{\E}{\ensuremath{\mathbb{E}}}
\renewcommand{\P}{\ensuremath{\mathbb{P}}}

\newcommand{\ind}[1]{\ensuremath{\mathbbm{1}_{\left\{#1\right\}}}}
\newcommand{\diff}{\mathop{}\mathopen{}\mathrm{d}}
\newcommand{\cal}[1]{\ensuremath{\mathcal{#1}}}
\newcommand\croc[1]{\left\langle #1\right\rangle}
\newcommand\steq[1]{\stackrel{\text{\rm #1.}}{=}}

\def\eps{\varepsilon} 
\def\cadlag{c\`adl\`ag }
\def\card{{\rm Card}}
\newcommand\Pois[1]{{\rm Pois}\left[#1\right]}

\newtheorem{proposition}{Proposition}
\newtheorem{definition}[proposition]{Definition}
\newtheorem{lemma}[proposition]{Lemma}

\newtheorem{theorem}[proposition]{Theorem}

\newtheorem{corollary}[proposition]{Corollary}

\numberwithin{proposition}{section}

\title[Thermodynamic Limits of CRN]{Thermodynamic Limits of Stochastic Chemical Reaction Networks with Phosphorylation}

\date{\today}
\author[L. Laurence]{Lucie Laurence}
\email{Lucie.Laurence@unibe.ch}
\address[L. Laurence]{Institute of Mathematical Statistics and Actuarial Science, Department of Mathematics and Statistics, University of Bern, Alpeneggstrasse 22, 3012 Bern, Switzerland}
 \address[Ph. Robert]{INRIA Paris, 48, rue Barrault, CS 61534, 75647 Paris Cedex, France}
\author[Ph. Robert]{Philippe Robert}
\email{Philippe.Robert@inria.fr}
\urladdr{http://www-rocq.inria.fr/who/Philippe.Robert}

\begin{document}

\begin{abstract}
  In this paper we investigate the stability properties of a fundamental mechanism of biological cells called phosphorylation. The system is a chemical reaction network  (CRN) for which  a chemical species, {\em the substrate},  can be sequentially transformed into two phosphorylated forms, by the activity of two types of enzymes, one type for phosphorylation, the other for dephosphorylation. We investigate a stochastic representation of this model, under the mass action kinetics. The total mass of the substrate is fixed at $N$, while the total mass of enzymes scales proportionally to $N$. The asymptotic behavior, when $N$ is large, of the concentrations of all chemical species is studied. 
	
We investigate the possible {\em stable} subsets of chemical species for the kinetics of the law of mass action. A stable subset is such that, with a convenient initial state, the number of copies of the  species of this subset remains $O(1)$ on any finite time interval as $N$ gets large. The role of the twelve reaction rate constants, {\em the catalytic constants} of the CRN,  is investigated from this point of view. An averaging principle of the corresponding Markov process is established for several regimes of the CRN. It is shown in particular that there exists a regime with three equilibrium points, with two of them stable. The proofs of the results rely on stochastic calculus with Poisson processes, convenient couplings of subsets of coordinates of the Markov process, technical results on $M/M/\infty$ queues, and a stability analysis of a dynamical system in $\R_+^4$. 
\end{abstract}
\maketitle

 \vspace{-5mm}

\bigskip

\hrule

\vspace{-3mm}

\tableofcontents

\vspace{-1cm}

\hrule

\bigskip

\section{Introduction}
The model we investigate  is a chemical reaction network (CRN) for which some chemical species called {\em substrat}, has $d$ possible states ${S}_i$, $i{=}1$, \ldots, $d$. The transitions between these states are achieved via catalytic reactions involving {\em enzymes}. Enzymes of species ${A}$ operate  upward transformations ${ S}_i{\rightharpoonup}{ S}_{i+1}$, $1{\le}i{<}d$, in two steps, while enzymes of species ${ B}$ are for  downward transformations ${ S}_i{\rightharpoonup}{ S}_{i-1}$, $1{<}i{\le}d$.
These reactions are done via the creation of complexes with substrat and enzymes, ${ AS}_i$, $1{\le}i{<}d$ and  ${ BS}_i$, $1{<}i{\le}d$. See Section~\ref{PhosphoSec} for a quick presentation of the biological background of these important chemical reaction networks. The chemical reaction network analyzed in this paper is for $d{=}3$. See Graph~\eqref{CRN}.

\medskip
\hrule
\begin{equation}\label{CRN}
\begin{cases*}
        A{+}S_1\mathrel{\mathop{\xrightleftharpoons[\alpha_{1}^-]{\alpha_{1}}}} AS_1\stackrel{\lambda_1}{\rightharpoonup} A{+}S_2 \mathrel{\mathop{\xrightleftharpoons[\alpha_{2}^-]{\alpha_{2}}}}  AS_2\stackrel{\lambda_2}{\rightharpoonup}A{+}S_3\\
         B{+}S_1\stackrel{\mu_2}{\leftharpoonup} BS_2\mathrel{\mathop{\xrightleftharpoons[\beta_{2}]{\beta_{2}^-}}}  B{+}S_{2} \stackrel{\mu_1}{\leftharpoonup}BS_3\mathrel{\mathop{\xrightleftharpoons[\beta_{1}]{\beta_{1}^-}}}   B{+}S_3.
\end{cases*}
\end{equation}
\hrule

\medskip
The sets of chemical species ${\cal C}_P$ and of reaction rates ${\cal R}_P$ are given by 
\begin{align}
{\cal C}_P&\steq{def} \{  { S}_1, { S}_2,{ S}_3, { A}, { AS}_1, { AS}_2,{ B}, { BS}_2, { BS}_3\},\label{CS}\\
{\cal R}_P&\steq{def} \{ \alpha_1,\alpha_1^-,\alpha_2,\alpha_2^-,\beta_1,\beta_1^-,\beta_2,\beta_2^-,\lambda_1,\lambda_2,\mu_1,\mu_2\}.\label{RR}
\end{align}
Using the biological representation of these systems,
\begin{enumerate}
\item the chemical species ${ S}_2$ and ${ S}_3$  are the {\em phosphorylated} versions of species ${ S}_1$, the {\em substrat}.
\item The complex ${ AS_1}$, resp. ${ AS_2}$, is the species formed by an enzyme of type ${ A}$ bound to a macromolecule of ${ S}_1$, resp. ${ S}_2$, and similarly for enzymes of type ${ B}$ with ${ S}_2$ and  ${ S}_3$.
\end{enumerate}

\subsection{Phosphorylation Mechanisms}\label{PhosphoSec}
In a biological context, the substrat is a protein which has several sites potentially accepting an additional phosphate group. This addition is done via a catalytic reaction involving  an enzyme called {\em kinase}. The reverse operation, the removal of a phosphate group, is done with an other enzyme called {\em phosphatase}. This is a very important and general mechanism for a large class of eukaryotic cells, where 30\% of proteins are thought to be phosphorylated. In the human genome, there are more than 1000  types of protein kinases and 500  types of protein phosphatases, and the number of possible phosphorylation sites of some proteins can be over 100. See~\citet{Krebs1993}, \citet{Cohen2000}, or~\citet{Thomson2009}.

A protein with one or more additional phosphate group is a {\em phosphorylated version} of the protein. A phosphorylated protein will have in general different functional properties within the cell from the original protein.  Depending on the concentrations of kinases and phosphatases and on the environment of the cell,  there will be  different populations of phosphorylated versions of given protein and therefore different global functional properties.  Phosphorylation can be seen as a way of getting multiple functional agents via the production of one type of protein, without resorting to the costly production of numerous dedicated proteins. The different populations of phosphorylated proteins are a kind of  encoding of a protein function in a cellular system. See~\citet{Gunawardena2005}. 

The modifications of the phosphorylation sites of a protein, with either kinases or phosphatases,  may be achieved in a sequential way,  or in a random way. If the enzyme
is released after every modification, the mechanism is called {\em distributive}, whereas if the enzyme remains bound it is referred
to as {\em processive}. For a more detailed description of the terminology, see~\citet{Suwanmajo}, \citet{Salazar2009} or~\citet{Conradi2018}.  By combining these possibilities, this gives overall a quite large number of possible phosphorylation systems.  The CRN investigated in this paper can be described as a {\em dual distributive sequential phosphorylation system}.

\subsection{Mathematical Models}
They describe the time evolution of its state, a vector $v$ associated with the nine chemical species,
\[
v{=}(x_1,x_2,x_3,u_A,a_1,a_2,u_B,b_1,b_2).
\]
The coordinates of $v$ denote the concentration, or the number of copies,  of the species
\begin{itemize}
	\item $x_j$, for  ${ S}_j$, $j{\in}\{1, 2, 3\}$;
	\item $u_A$, resp. $u_B$,  for  ${ A}$, resp. ${ B}$;
	\item $a_{1}$, resp. $a_2$, for ${ AS}_{1}$, resp. ${ AS}_{2}$;
	\item $b_{1}$, resp. $b_2$, for ${ BS}_{3}$, resp. ${ BS}_{2}$.
\end{itemize}
Since there is no {\em ex-nihilo} creation or  definitive destruction of chemical species, the following conservation relations 
\begin{equation}\label{eqCons}
\begin{cases}
x_1+x_2+x_3+a_1+a_2+b_1+b_2=M_S,\\
E=u_A{+}a_1{+}a_2 \quad \text{and} \quad F=u_B{+}b_1{+}b_2,
\end{cases}
\end{equation}
hold, where
\begin{itemize}
\item $M_S$ is the total mass of substrat (phosphorylated/bound with enzymes or not);
\item $E$, resp. $F$,  is the total mass of enzymes of species ${ A}$, resp. ${ B}$.
\end{itemize}
The quantities $M_S$, $E$ and $F$ are assumed to be fixed. 

The kinetics considered for these networks is the classical {\em law of mass action}. See~\citet{guldberg1864studies} and~\citet{Voit2015} for example. 

\subsubsection{Deterministic Models of CRNs with Phosphorylation}\label{DetSec}

In this case, the vector of the concentrations of the chemical species is the solution of a dynamical system in $\R_+^9$ satisfying Relations~\eqref{eqCons}. It describes the time evolution of the CRN. Due to the kinetics of the law of mass action,  this is a set of nine ordinary differential equations with polynomial terms. We pick two examples of them to illustrate the law of mass action in this context,
\[
\begin{cases}
\dot{a}_1(t)&=\alpha_1 u_A(t)x_1(t)-(\lambda_1{+}\alpha_1^-)a_1(t),\\
\dot{x}_2(t)&=\lambda_1a_1(t)+\alpha_2^-a_2(t)-\alpha_2u_A(t)x_2(t) +\mu_1b_1(t)+\beta_2^-b_2(t)-\beta_2u_B(t)x_2(t).
\end{cases}
\]
This is the model, or a close variant of it, investigated in~\citet{Conradi2024,Feliu2022,Feliu2024}.

With the conservation relations~\eqref{eqCons}, the ODE in $\R_+^9$ can be reduced to an ODE in $\R_+^6$. Still, this is a complex dynamical system.

\subsubsection{Stochastic Models of CRNs with Phosphorylation}\label{StocSec}

With stochastic models, the state of the CRN is given by the number of copies of the nine chemical species. The time evolution is described by a Markov jump process with values in $\N^9$.  The law of mass action defines its infinitesimal generator and the $Q$-matrix of jump rates. For example, in state
$v{=}(x_1,x_2,x_3,u_A,a_1,a_2,u_B,b_1,b_2)$, for the two reactions with rates $\alpha_1$ and $\alpha_1^-$ in the CRN~\eqref{CRN}, the corresponding transitions are given by:
\begin{align*}
  v&\longrightarrow v{+}e_{A_1}{-}e_{X_1}{-}e_{U_A} &\text{with rate}\quad & \alpha_1x_1u_A\\
  v&\longrightarrow v{-}e_{A_1}{+}e_{X_1}{+}e_{U_A} &\text{with rate}\quad & \alpha_1^-a_1,
\end{align*}
where $e_z$ is the unit vector associated to the index of the chemical species $z$ (with the slight abuse of notation, we use $\{X_1, X_2, X_3, U_A, A_1, A_2, U_B, B_1, B_2\}$ for $\cal{C}_P=\{S_1, S_2, S_3, A, AS_1, AS_2, B, BS_1, BS_2\}$, to have a clear correspondence between the process and the coordinate). See Section~\ref{ModelSec}. Similarly to the deterministic model, with the conservation relations~\eqref{eqCons}, there is a possible Markovian description in dimension six. 

\subsection*{Objectives of Mathematical Models}
Because of the biological background, see Section~\ref{PhosphoSec},  the main variables of interest are the concentrations/numbers of copies $(x_1,x_2,x_3)$ of the three phosphorylated versions of substrat given by the chemical species ${ S}_1$, ${ S}_2$ and ${ S}_3$.

For deterministic CRNs  a significant part of the mathematical literature has been devoted to
\begin{enumerate}
\item The conditions on the reaction rates and the topology for the existence of multiple equilibrium points of the solution of the ODE. See, for example, \citet{Barab}, \citet{Bazzani2012}, \citet{Feliu2022}, \citet{Holstein}, \citet{Flockerzi2014}, \citet{Markevich2004}, \citet{Salazar2009}, or \citet{Thomson2009}.
\item The existence of oscillatory behavior, i.e. when the dynamical system exhibits a Hopf-bifurcation, see \citet{Conradi2019,Conradi2024} \citet{Feliu2024}.
\end{enumerate}
Some approximations such as the quasi-steady state approximation have been used to further simplify these ODES. It consists essentially of speeding-up some chemical reactions which may justify the removal of chemical species ${ AS}_1$, ${ AS}_2$, ${ BS}_3$ and ${ BS}_2$. The resulting ODE is not anymore a polynomial dynamical system. It is expressed in terms of Hill functions: the ``Michaelis-Menten kinetics''. The validity of such approximation has to be discussed in practice and also justified from a theoretical point of view. See Salazar and H\"ofer~\cite{Salazar2009,Salazar2006}, and~\citet{Gunawardena2014}.

There are few studies of stochastic models of CRNs with phosphorylation up to now. They are mainly devoted to simulations of specific systems, see~\citet{Steijaert}, \citet{Lopez2018}, or~\citet{Zippo2025}.

\subsection{Thermodynamic Limits}
In this paper we study a stochastic model of the CRN~\eqref{CRN} when the total number of copies of substrat $M_S{=}N$, interpreted as a {\em volume} variable,  is large and  
the total number of copies $E_N$, resp. $F_N$, of  enzymes of type ${ A}$, resp. ${ B}$ are proportional to $N$,
\begin{equation}\label{ScalIntr}
\lim_{N\to+\infty}\left(\frac{E_N}{N},\frac{F_N}{N}\right)=(e,f) \in (0, +\infty)^2.
\end{equation}
This is a reasonable biological assumption, see \citet{Albe} and~\citet{Thomson2009} for example. 

The full state descriptor  $(V_N(t))$ of the time evolution of our CRN has values in $\N^9$,
\[
(V_N(t))=(X_1^N(t),X_2^N(t),X_3^N(t),U_A^N(t),A_1^N(t),A_2^N(t),U_B^N(t),B_1^N(t),B_2^N(t)).
\]

The deterministic picture of the CRN in Section~\ref{DetSec} is {\em not} an asymptotic limit of the process of concentration $(V_N(t)/N)$ as $N$ gets large.
The main reason is that there are reactions of the order of $N^2$, for the formation of complexes ${ AS}_i$, ${ BS}_i$, $i{=}1$, $2$, while all the other reactions would be of the order $N$. Such a limiting result would hold but with the following change: For $i{=}1$, $2$, the reaction rates $\alpha_i$, resp. $\beta_i$,  have to be replaced by $\alpha_i/N$, resp. $\beta_i/N$. In this way, some chemical reactions are slowed down.  This is changing in fact the very structure of the CRN when $N$ is large. 

This is a common feature of deterministic CRNs viewed as an asymptotic description of discrete models of chemical reactions: The total number of copies of every chemical species is ``large'', of the order of the volume of the reaction.  See~\citet{Mozgunov} and~\citet{LR23} for example. In our paper, we consider a fixed topology of a CRN with phosphorylation,  the reaction rates do not depend on $N$. 

\subsection{Identification of Stable Regimes}
From the point of view of thermodynamic limits of CRNs, a key difference between deterministic and stochastic models is that  deterministic models implicitly assume that, at the microscopic level,  all chemical species are of the same order of magnitude $N$. This is not necessarily true for stochastic models. This is an important motivation of stochastic models for biological systems where the number of copies of some  chemical species with high activity  may be small.

In a stochastic context,  it may happen that if some species are of the order of $N$, others are ``$O(1)$'' random variables. 
The main contribution of our paper is of investigating the possible ``correct'' orders of magnitude for the coordinates of $(V_N(t))$ of the $CRN$ with phosphorylation. To the best of our knowledge, these important aspects do not seem to have been investigated for CRNs with phosphorylation. We give a  heuristic, rough,  formulation of the results we investigate.

If $H$ is a subset of ${\cal C}_P$ and  $|H|{=}\card(H)$, we denote $(V^H_N(t))$ as the vector  in $\N^{|H|}$  with the coordinates of $(V_N(t))$ whose indices are in $H$. $H^c$ denotes ${\cal C}_P{\setminus}H$. 

Throughout the paper,  $H$ will stand for the subset of ${\cal S}_P$ indicating the indices of the coordinates which are $O(1)$, and the other coordinates,  in $H^c$, are  of the order of $N$.  See  Definition~\ref{Order-Def} for a rigorous formulation. 
The regime associated to a subset $H$ of ${\cal C}_P$ is said to be {\em stable} if the following properties are satisfied: Under the assumptions that,
\begin{itemize}
\item[---]
  the values of the sequence $(V^{H}_N(0))$ are in a fixed finite subset of $\N^{|H|}$;
\item[---]
  The sequence $(V^{H^c}_N(0)/N)$  is converging to an arbitrary element $y_0$ of a non-empty open subset of $(0,{+}\infty)^{9-|H|}$,
\end{itemize}
then, the coordinates of $(V^{H}_N(t))$ are $O(1)$  and, for the convergence in distribution,  the relation
\begin{equation}\label{Dynv}
\lim_{N\to+\infty} \left(\frac{V^{H^c}_N(t)}{N}\right)=(v(t))
\end{equation}
holds where $(v(t))$ is a non-trivial dynamical system starting at $y_0$. 

The associated dynamical system $(v(t))$, solution of a deterministic ODE, is in a state space of dimension $|H^c|$  strictly less than $9$,  the dimension of the ODE in the deterministic case defined in Section~\ref{DetSec}. The downside of the stochastic case is that the associated ODE is not anymore expressed in terms of polynomials in the state variable, but with rational functions. Note that rational functions also arise in some deterministic models in the literature, particularly when quasi-steady state approximations are used, leading to the so-called Michaelis-Menten kinetics, as in~\citet{Markevich2004}.

In our paper, depending on the parameters of the system, the sets of chemical species H that {\em can be} stable are
\[
\{{ S}_2,{ S}_3, { A},  { AS}_2, { BS}_3\},\quad
\{  { S}_1, { S}_2,  { AS}_1,{ B},  { BS}_2\},\quad 
\{   { A}, { B}\},\quad
\{  { S}_1, { S}_2,{ S}_3\}.
\]
It should be noted that, depending on the conditions on $e$ and $f$ of Relations~\eqref{ScalIntr} and on the set ${\cal R}_P$ of reaction rates, such a convergence in distribution to a dynamical system $(v(t))$ is valid for several of the above sets $H$, but on a finite time interval $[0,t_0)$, with $t_0{>}0$. The difficulty is to find the conditions for which the subset $H$ is stable, i.e. when the convergence holds for any finite interval $[0, T]$, $T{>}0$.

 The interplay between these ``large'' and ``small'' coordinates is at the core of the difficulty of these mathematical models. This is essentially related to the proof of a convenient averaging principle, see~\citet{Kurtz1992}, and also to the proof of a stability result of some of the equilibrium points of the deterministic dynamical system $(v(t))$ of Relation~\eqref{Dynv}.

 \subsection*{Contributions}
 In the study of deterministic networks, a significant effort has been devoted to find conditions on the twelve reactions rates, the {\em catalytic constants},  to have multiple  stable points, or an oscillating behavior. See for example~\citet{Conradi2014}, \citet{Conradi2019}  and~\citet{Conradi2024}. Due to the number of parameters, this is a complicated issue, there are many open questions in this domain. 

 The role of the twelve reaction rates of ${\cal R}_P$  and of the values of the total concentrations $e$ and $f$ of enzymes of species ${ A}$ and ${ B}$  is discussed in our setting from the point of view of the stability property described above. We now give a sketch of our results. 

 \subsection{Condition $e{+}f{<}1$} There are less enzymes than substrat. See Section~\ref{M1Sec} and Appendix~\ref{M1RSec}.  This is a common biological assumption, see~\citet{Albe}. In this case, the stability properties of the CRN depend only on the parameters $e$, $f$ and the reaction rates $\lambda_1$, $\lambda_2$, $\mu_1$ and $\mu_2$. See Theorem~\ref{ThM1M} and Proposition~\ref{StabM1M} for example. These reaction rates are the rates of break-up of the complexes ${ AS}_1$, ${ AS}_2$, ${ BS}_3$ and ${ BS}_2$ to produce the next phosphorylated (or un-phosphorylated) version of the substrat with the species ${ A}$ or ${ B}$. Table~\ref{Tab1} below summarizes the results on the stability of three possible subsets $H$ of indices.

 \begin{table}[h]
\begin{tabular}{|c|c|c|c|c|}
  \hline Set $H$ of $O(1)$ species   &$\{{ S}_2,{ S}_3, { A},  { AS}_2, { BS}_3\}$&$\{  { S}_1, { S}_2,  { AS}_1,{ B},  { BS}_2\}$&$\{   { A}, { B}\}$\\
  \hline $\lambda_1e<\mu_2f$, $\mu_1f>\lambda_2e$& stable & $\emptyset$ & $\emptyset$ \\
  \hline $\lambda_1e>\mu_2f$, $\mu_1f<\lambda_2e$&$\emptyset$&stable & $\emptyset$ \\
  \hline $\lambda_1e>\mu_2f$, $\mu_1f>\lambda_2e$& $\emptyset$ &$\emptyset$& stable${}^*$\\
  \hline $\lambda_1e<\mu_2f$, $\mu_1f<\lambda_2e$&stable&stable &unstable${}^*$ \\
  \hline
\end{tabular}
\vspace{4mm}

\caption{Stability Properties of the CRN when $e{+}f{<}1$}\label{Tab1}
 \end{table}

 The symbol $\emptyset$ is used when there does not exist a positive equilibrium inside the state space of the CRN for the  associated dynamical system $(v(t))$ of Relation~\eqref{Dynv}\footnote{The ${}^*$ in the table indicates that the result has been obtained when  all $\alpha_{i}^-$, $\beta_{i}^{-}$, $i{=}1$, $2$, are either $0$ (circular topology) or $1$}. In this later case, this amounts to the fact that the subset $H$ of $O(1)$ species is not the ``right'' one. Some coordinates have to change their order of magnitude. 

\begin{enumerate}
 \item Under the conditions $\lambda_1e{-}\mu_2f{<}0$, $H{=}\{{ S}_2,{ S}_3, { A},  { AS}_2, { BS}_3\}$, is stable.  In this regime, the weight of the system is ``on the left'' of the CRN, $X_1, A_1, B_2$ and $U_B$ are $O(N)$, and the species ``on the right'' of the CRN are $O(1)$.
  \item Under the conditions $\mu_1 f{-}\lambda_2e{<}0$, $H{=}\{  { S}_1, { S}_2,  { AS}_1,{ B},  { BS}_2\}$, is stable.  This is the symmetrical regime of the previous one: the weight of the system is ``on the right'', $X_3, B_1, A_2$ and $U_A$ are $O(N)$, and the species ``on the left'' of the CRN are $O(1)$.
	\item The set $H{=}\{ { A}, { B}\}$ corresponds to an interesting regime, where the number of free enzymes are $O(1)$: almost all species ${ A}$ and ${ B}$ are bound to substrat. Under the condition $\lambda_1e{-}\mu_2f{>}0$ and $\mu_1 f{-}\lambda_2e{<}0$, it is stable, whereas when $\lambda_1e{-}\mu_2f{>}0$ and $\mu_1 f{-}\lambda_2e{<}0$ hold, it is shown to be  unstable. This has been proved when the values of $\alpha_{i}^-$, $\beta_{i}^{-}$, $i{=}1$, $2$ are all  $0$ (circular topology) or $1$. This restriction is due to the limitation of symbolic computations used in Section~\ref{M1MSec}. We conjecture that the property also holds for arbitrary  $\alpha_{i}^-$, $\beta_{i}^{-}$, $i{=}1$, $2$
 \end{enumerate}

The condition $\lambda_1\mu_1{-}\lambda_2\mu_2{>}0$ of ~\citet{Conradi2024} plays also a role in our classification. This condition holds in the third row of Table~\ref{Tab1}.  Interestingly, it has been shown in this reference that, under this condition, there is a possibility of an oscillating regime for the deterministic version of the CRN. The two mathematical models of CRNs being quite different, the connection with our results is not clear.

It should be noted that we did not study the (hard) problem of determining the {\em basin of attractions} of stable equilibrium points, neither the eventual behavior of the sample paths starting in the neighborhood of an unstable equilibrium point.

\subsection{Condition $e{+}f{>}1$}
The initial motivation of this work was, for biological reasons, the case $e{+}f{<}1$.  See~\citet{Albe}. To have an overview of the general properties of these mathematical models, we have also investigated the set of possible equilibrium points (not their stability properties) under the assumption $e{+}f{>}1$. See Section~\ref{SatSec}.

Since there is, a priori, an oversupply of enzymes, the regime investigated is when the number of copies of the species ${ S}_1$, ${ S}_2$ and ${ S}_3$ are $O(1)$. The situation has some similarities with the previous case, but it is more delicate for some aspects. The sign of $\lambda_1\mu_1{-}\lambda_2\mu_2$ plays, directly this time,  an important role for the existence and uniqueness of an equilibrium point. A multi-stability property nevertheless may occur  depending on the parameters $e$, $f$ and $\lambda_1$, $\lambda_2$, $\mu_1$ and $\mu_2$ as before, but also  on the parameters $\alpha_2$ and $\beta_2$. See Proposition~\ref{M2ExFP}. 
 
In both cases  $e{+}f{<}1$ and  $e{+}f{>}1$, the reaction rates $\alpha_1$, $\alpha_1^-$, $\alpha_2^-$, $\beta_1$, $\beta_1^-$ and $\beta_2^-$ do not seem to have an impact on the equilibrium properties of these networks, even if some  equilibrium characteristics, like the limits of occupation measures,  depend on these parameters.

\subsection{A Technical Overview}
The convergence results of the paper are related to the proof of an averaging principle for a sequence of Markov processes. See the general references~\citet{Khasminskii1}, \citet{Papanicolaou} or Chapter~7 of~\citet{Freidlin}. In the context of jump process, a useful, comprehensive, approach is presented in~\citet{Kurtz1992}. This is a classical ingredient of the analysis of stochastic CRNs, see \citet{BallKurtz}, \citet{KangKurtz}, \citet{Kim2017},  and~\citet{LR24-2,LR23,LR24}.

Classically, in this setting, the coordinates of the Markov process are partitioned into two subsets, one for ``slow'' processes and the others are fast process. Note that this is a rough description since this separation of variables may be a little more subtle sometimes. The standard approach consists in
\begin{enumerate}
\item Proving  tightness properties of the associated sequence of occupation measures of the fast processes;
  \item Proving the tightness of the slow processes. 
\end{enumerate}
Step~(1) can be quite complicated to achieve, especially when the dimension of the state space of fast processes is greater than $2$.

For our CRN, the main initial difficulty is of proving that for a subset $H$, if the initial state $V_N(0)$ is such that the coordinates of $V_N^H(0)$ are $O(1)$ and the coordinates of $V_N^{H^c}(0)$ are $O(N)$, as $N$ gets large, then this property is valid on a non-empty time interval. As it is well-known, explosion mechanisms of CRNs may change the orders of magnitude of some coordinates very quickly. The fact that the dimension of the slow variables of the order of $N$ is at least 4 complicates the ``control'' of the coordinates of the Markov process. In~\citet{FRZ25}, also in a multi-dimensional setting, the control of the $O(1)$ processes is achieved via a different approach,  with upper-bounds on the fluctuations of fast process on finite time intervals. We review several technical steps of our approach. 

\begin{enumerate}
\item[a)] {\sc Couplings}\\
We  introduce several couplings of subsets of the coordinates. In order to use classical stochastic calculus with Poisson processes including the other coordinates, the couplings have to be defined carefully on the initial probability space. See the proof of Theorem~\ref{ThM1L}. These couplings use networks of $M/M/\infty$ queues. Several technical results are used in this domain, Lemma~\ref{MMILower} and Proposition~\ref{MMIHit}, or derived, like Proposition~\ref{MMNet}. See Section~\ref{MMISec}.

Our convergence results are for the sequence of the scaled slow variables to the solution of a deterministic ODE. They also give the explicit expression of the limiting value of the occupation measures of the $O(1)$ variables. In two regimes of this CRN, this limit is a {\em random } measure and not a deterministic one as it is classically the case in most of averaging principle results. 

\item [b)] {\sc Slow $O(1)$ variables}. \\
In most of averaging principles proved for CRNs, the $O(1)$ variables, if any, are fast variables. This is not the case for two regimes of these CRNs. 

  In Section~\ref{M1LSec}  three $O(1)$ coordinates $(A_2^N(t),X_3^N(t),B_1^N(t))$ of $(V_N^H(t))$ are slow, the transition rate of their positive jumps are bounded.They have not  been added to the slow variables but kept in the occupation measure for the main reason that,  if there is likely a convergence for the Skorohod topology of $(A_2^N(t),B_1^N(t))$,  it does not hold for $(X_3^N(t))$ (towards $(0)$), although it would probably hold for a weaker topology. For simplicity, to avoid annoying technicalities with limited interest, we have chosen to consider only the convergence of their occupation measures. There is also a (simpler) case of this situation in  Proposition~6 of~\citet{LR24}.

  It should be noted that the invariant distribution of fast variables do not have necessarily a product form. See Section~\ref{M1MSec}.
  
\item[c)] {\sc Stability Properties}.\\
  The stability properties of the equilibrium of the dynamical systems $(v(t))$ of Relation~\eqref{Dynv} are easy to establish for the regimes of Section~\ref{M1LSec}  and of Section~\ref{M1RSec} of the appendix. For one of the regimes of  Section~\ref{M1MSec}, the dynamical system is an ODE in dimension~4 with one equilibrium point $v_*$. Its stability properties are more delicate to handle. The numerator of the characteristic polynomial $P_*$ of the Jacobian matrix at the point $v_*$ has degree~4, but some of its coefficients are polynomials of degree~37  with respect to the parameters of the CRN: The twelve reaction rates and $e$ and $f$ of Relation~\eqref{ScalIntr}. 

We use a classical criterion to show that all roots of a polynomial have or not a negative real part: The Routh-Hurwitz Criterion, a (nice) result of the 19th century (!). One has to check the sign of five constants expressed with the coefficients of $P_*$, under the conditions of the regime of Section~\ref{M1MSec}  defined by three inequalities for reaction rates and  $e$ and $f$. Note that a variant of the Routh-Hurwitz Criterion has been used in~\citet{Conradi2019} to prove a Hopf-bifurcation of  a class of deterministic CRNs with phosphorylation.

Given the complexity of the coefficients of the polynomial $P_*$, in view of the  Routh-Hurwitz Criterion, the main ingredient of the proof of our main stability result, Proposition~\ref{StabM1M}, uses a convenient parametrization of four constants (two reaction rates and $e$ and $f$) of the CRN with four variables. The three inequalities defining the regime are equivalent to the fact that these four variables are positive. The next step consists in getting the expression of Routh-Hurwitz constants with the help of the symbolic computation language Maple\texttrademark~\cite{Maple}. Because of the parametrization, it turns out that the sign of each of them is then quite easy to obtain.  
\end{enumerate}

\subsection{Organization of the Paper}
Section~\ref{ModelSec} introduces the general notations and definitions and results on networks of $M/M/\infty$ queues are presented. 

Section~\ref{M1Sec}  investigates the case $e{+}f{<}1$, for $e$ and $f$ of Relation~\eqref{ScalIntr}. The total mass of enzymes is strictly less than the total mass of substrat. Section~\ref{M1LSec} analyses the regime $H{=}\{{ S}_2,{ S}_3, { A},  { AS_2}, { BS_3}\}$ and
in Section~\ref{M1MSec},  $H{=}\{{ A}, { B}\}$. The case $H{=}\{{ S}_1, { S}_2,  { AS_1},{ B},  { BS_2}\}$, similar to the regime of Section~\ref{M1LSec}, is briefly presented in Section~\ref{M1RSec} of the Appendix. 

Section~\ref{M1Sec} considers the case $e{+}f{>}1$. The possible equilibrium points of the regime $H{=}\{{ S}_1,{ S}_2,{ S}_3\}$ are investigated.

\section{Stochastic Model}\label{ModelSec}
\subsection{Notations and Definitions}
If $E$ is a locally compact space ${\cal B}(E)$ is the set of Borelian subsets of $E$ and ${\cal C}(E)$, resp. ${\cal C}_c(E)$, denotes the set real-valued continuous functions on $E$, resp. continuous functions with compact support on $E$. The set of Radon measures on $E$ is denoted by ${\cal M}(E)$, it is endowed with the topology of weak convergence. Finally, ${\cal M}_P(E)$ denotes the subset of ${\cal M}(E)$ of probability distributions on $E$. See~\citet{Rudin}. 

A \cadlag process $(X(t))$ is a stochastic process whose sample paths are almost surely continuous on the right with left limit $X(s{-})$ at every point $s{>}0$. 

\subsection*{Probability space}
For $a{>}0$, $\Pois{a}$ denotes the Poisson distribution on $\N$ with parameter $a$. 

It is assumed that, for any element $\kappa$ of the set of reaction rates ${\cal R}_P$, see Relation~\eqref{RR}, there is a Poisson process  ${\cal P}_\kappa$ on $\R_+^2$ with intensity measure $\diff s{\otimes}\diff t$ on $\R_+^2$. See Chapter~1 of~\citet{Robert}. The Poisson processes ${\cal P}_\kappa$, $\kappa{\in}{\cal R}_P$, are independent.  

For $a{\ge}0$ and $\kappa{\in}{\cal R}_P$, we will use  the differential notation, 
\[
  {\cal P}_\kappa((0,a),\diff t) =\int_{\R_+}\ind{s{\le}a}{\cal P}_\kappa(\diff s,\diff t), 
\]
i.e., for $f{\in}{\cal C}_c(\R_+^2)$,
\[
\croc{{\cal P}_\kappa((0,a), \cdot),f}=\int_{\R_+^2}\ind{s{\le}a}f(t){\cal P}_\kappa(\diff s,\diff t).
\]
The associated filtration is $({\cal F}_t)$, where, for $t{\ge}0$,  ${\cal F}_t$ is the completed  $\sigma$-field generated by the set of random variables
\begin{equation}\label{SDEFilt}
\left\{{\cal P}_\kappa(A{\times}[0,s)), \kappa{\in}{\cal R}_P, s{\le}t, A{\in}{\cal B}(\R_+)\right\}. 
\end{equation}
Throughout the paper, the notions of adapted, optional processes, of martingale and stopping time are implicitly with respect to this filtration. See~\citet{Rogers2} for general definitions  and results of stochastic calculus. 

\subsection{The Markov Process}
The complete state descriptor of the CRN~\eqref{CRN} is given by the vector $(x_1,x_2,x_3,u_A,a_1,a_2,u_B,b_1,b_2)$. The conservation relations~\eqref{eqCons} give three relations which, in principle,  reduce the dimension of the state space from nine to six. For the simplicity of presentation, of stable sets in particular, we will nevertheless represent the state space ${\cal S}_P$ of our process as a subset of $\N^9$, 
\[
{\cal S}_P\steq{def} \left\{v=(x_1,x_2,x_3,u_a,a_1,a_2,u_b,b_1,b_2){\in}\N^6: \substack{\displaystyle x_1{+}x_2{+}x_3{+}a_1{+}a_2{+}b_1{+}b_2{=}M_S\\\ \\\displaystyle u_a{+}a_1{+}a_2{=} E,\, u_b{+}b_1{+}b_2{=}F}\right\}.
\]
For $v{=}(x_1,x_2,x_3,u_a,a_1,a_2,u_b,b_1,b_2){\in}{\cal S}_P$, we will also use  the representation
\[
v=(v_{z}, z{\in}{\cal C}_P),
\]
so that $v_{X_i}{=}x_i$, $i{=}1$, $2$, $3$ and $v_{U_A}{=}u_A$, $v_{U_B}{=}u_B$, and, for $i{=}1$, $2$, $v_{A_i}{=}a_i$, $v_{B_{i}}{=}b_{i}$, with the slight abuse of notation ${A_i}{=}{AS_i}$ and $B_{4-i}{=}B_{i}{=}BS_{4-i}$, $i{=}1$, $2$.

\subsection*{Law of Mass Action}\label{MassSec}
The kinetics of CRN with phosphorylation is assumed to be the law of mass action. For the associated Markov process, starting from $v{=}(x_1,x_2,x_3,u_A,a_1,a_2,u_B,b_1,b_2){\in}{\cal S}_P$, the possible jumps with the corresponding transition rates are 
\[
    \begin{cases}
\scriptstyle{e_{A_1}{-}e_{U_A}{-}e_{X_1}}, &\scriptstyle{\alpha_1x_1u_A},\\
\scriptstyle{{-}e_{A_1}{+}e_{U_A}{+}e_{X_1}}, & \scriptstyle{\alpha_1^-a_1},\\
\scriptstyle{e_{B_1}{-}e_{U_B}{-}e_{X_3}}, & \scriptstyle{\beta_1x_3u_B},\\
\scriptstyle{{-}e_{B_1}{+}e_{U_B}{+}e_{X_3}}, & \scriptstyle{\beta_1^-b_1},
\end{cases}
\begin{cases}
\scriptstyle{e_{A_2}{-}e_{U_A}{-}e_{X_2}}, &\scriptstyle{\alpha_2x_2u_A},\\
\scriptstyle{{-}e_{A_2}{+}e_{U_A}{+}e_{X_2}}, &\scriptstyle{\alpha_2^-a_2},\\
\scriptstyle{e_{B_2}{-}e_{U_B}{-}e_{X_2}}, &\scriptstyle{\beta_2x_2u_B},\\
\scriptstyle{{-}e_{B_2}{+}e_{U_B}{+}e_{X_2}}, &\scriptstyle{\beta_2^-b_2},
\end{cases}
\begin{cases}
\scriptstyle{{-}e_{A_1}{+}e_{U_A}{+}e_{X_2}}, &\scriptstyle{\lambda_1a_1},\\
\scriptstyle{{-}e_{A_2}{+}e_{U_A}{+}e_{X_3}}, &\scriptstyle{\lambda_2a_2},\\
\scriptstyle{{-}e_{B_1}{+}e_{U_B}{+}e_{X_2}}, &\scriptstyle{\mu_1b_1},\\
\scriptstyle{{-}e_{B_2}{+}e_{U_B}{+}e_{X_1}}, &\scriptstyle{\mu_2b_2},
\end{cases}
\]
with the notations $(e_z, z{\in}{\cal C}_p)$ of Section~\ref{StocSec}.
\begin{multline*}
(V(t))\steq{def} (X_1(t), X_2(t), X_3(t), U_A(t), A_1(t), A_2(t), U_B(t), B_1(t), B_2(t))\\=(V_{z}(t), z{\in}\cal{C}_P)
\end{multline*}

\subsection*{Stochastic Differential Equations (SDEs)}
The coordinates of the process $(V(t))$ can be represented with the solution $(X_1(t),X_3(t),A_1(t),A_2(t),B_1(t),B_2(t))$ of the set of SDEs,   for $t{\ge}0$ and $i{\in}\{1,2\}$,
\begin{equation}\label{SDE}
	\begin{cases}
	  \diff X_1(t)&= \cal{P}_{\alpha_1^-}((0, \alpha_1^-A_1(t{-})), \diff t) {+}\cal{P}_{\mu_2}((0, \mu_2B_2(t{-})), \diff t)\\
                &\hspace{4cm}{-}\cal{P}_{\alpha_1}((0,\alpha_1U_A(t{-})X_1(t{-})), \diff t),\\ 
		\diff X_3(t)&= \cal{P}_{\beta_1^-}((0,\beta_1B_1(t{-})), \diff t) {+}\cal{P}_{\lambda_2}((0,\lambda_2A_2(t{-})), \diff t)\\
                &\hspace{4cm}{-}\cal{P}_{\beta_1}((0,\beta_1U_B(t{-})X_3(t{-})), \diff t), \\
		\diff A_i(t)&= \cal{P}_{\alpha_i}((0,\alpha_iU_A(t{-})X_i(t{-})), \diff t)\\
                      &\hspace{2cm}{-}\cal{P}_{\alpha_i^-}((0,\alpha_iA_i(t{-})), \diff t){-}\cal{P}_{\lambda_i}((0,\lambda_iA_i(t{-})), \diff t),\\
		\diff B_i(t)&=\cal{P}_{\beta_i}((0,\beta_iU_B(t{-})X_{4{-}i}(t{-})), \diff t)\\
&\hspace{2cm}{-}\cal{P}_{\beta_i^-}((0,\beta_i^-B_i(t{-})), \diff t){-}\cal{P}_{\mu_i}((0,\mu_iB_i(t{-})), \diff t),\\
	\end{cases}       
\end{equation}
and the processes $(U_A(t))$, $(U_B(t))$ and $(X_2(t))$ are defined by the relations of mass conservation ,
\begin{align}
X_1(t)+X_2(t)+X_3(t)+A_1(t)+A_2(t)+B_1(t)+B_2(t)=M_S,\label{eqCons1}\\
U_A(t)=E{-}A_1(t){-}A_2(t) \text{ and } U_B(t)=F{-}B_1(t){-}B_2(t).\label{eqCons2}
\end{align}
See Appendix~A-1 of~\citet{LR23} for more details on this type of representation.

If $H$ is a subset  of ${\cal C}_P$, recall that
\[
(V^H(t))\steq{def} (V_z(t),z{\in}H),
\]
the random measure $\Lambda^H$ on $\R_+{\times}\N^{|H|}$ is the occupation measure associated to the process $(V^H(t))$,  the measure defined by, for $f{\in}{\cal C}_c(\R_+{\times}\N^{|H|})$,
\[
\croc{\Lambda^H,f}=\int_0^{+\infty} f\left(t,(V^H(t))\right)\diff t,
\]
and, for $T{>}0$, the occupation measure $\Lambda^{H,T}$ associated to the  process $(V^H(t))$ killed at $T$ is defined as
\begin{equation}\label{OccT}
\croc{\Lambda^{H,T},f}=\int_0^{T} f\left(t,(V^H(t))\right)\diff t.
\end{equation}
See~\citet{Dawson} for a general presentation of random measures.

\subsection{Thermodynamic Limits Framework}\label{ScalingC}
Throughout the paper it will be assumed that the total mass of substrat $M_S$ is $N$, the ``volume'' of the chemical reaction and the total masses $E_N$ and $F_N$ of the two enzyme species ${ A}$ and ${B}$ satisfy the relations
\begin{equation}\label{TLimits}
\lim_{N\to+\infty}\left(\frac{E_N}{N},\frac{F_N}{N}\right)=(e,f) \in (0, +\infty)^2.
\end{equation}
The corresponding process $(V(t))$ is denoted by
\[
(V_N(t))=(X^N_1(t), X^N_2(t), X^N_3(t),U_A^N(t),A^N_1(t),A^N_2(t),U_B^N(t),B^N_1(t),B^N_2(t)),
\]
and with similar notations,  for  $(V_N^H(t))$ and $(\Lambda_N^H)$, for $H{\subset}{\cal C}_P$. We define the scaled process
\begin{equation}\label{ScalNot}
\left(\overline{V}^H_{N}(t)\right)\steq{def}\left(\frac{V_z^N(t)}{N},z{\in}H\right).
\end{equation}
We will study the asymptotic behavior of this model for a convenient subset $H$ of indices. The scaling results obtained give the order of magnitude in $N$ of the different chemical species. A subset $H$ is said to be {\em stable} when the coordinates in $H$ stay $O(1)$ and the others stay of the order of $N$, on the whole time axis $\R_+$.  Mathematically, this is expressed rigorously by the following definition.

\begin{definition}\label{Order-Def}
A subset  $H{\subset}{\cal C}_P$  is said to be {\em stable} for the sequence of Markov processes $(V_N(t))$ if  there exists a non-empty open subset $O_P$ in the interior of $\R_+^{|H^c|}$ and a deterministic function $v{:}\R_+{\to}O_p$, such that, for any $v_0^{H^c}{\in}O_P$ and $v^H_0{\in}\N^{|H|}$, if  
\begin{equation}\label{CVIn}
\lim_{N\to+\infty} \left(\frac{V^{H^c}_{N}(0)}{N}\right)=v_0^{H^c}\text{ and } v_N^U(0){=}v^H_0, \forall N{\ge}1,
\end{equation}
then the convergence in distribution, 
\begin{equation}\label{eqCVM}
\lim_{N\to+\infty}  \left(\left(\overline{V}^{H^c}_{N}(t)\right), \Lambda^H_{N}\right)=((v(t)), \Lambda^H_\infty),
\end{equation}
holds, where $\Lambda^H_\infty$ is a random measure on $\R_+{\times}\N^{|H|}$, such that, almost surely,  $\Lambda^H_{\infty}(\diff t, \N^{|H|})$ is the Lebesgue measure on $\R_+$.
\end{definition}

The condition that the subset $O_P$  is in the interior of $\R_+^{|H^c|}$ ensures that the coordinates of $(V_N^{H^c}(t))$ are exactly of the order of $N$. The condition that $\Lambda^H_{\infty}(\diff t, \N^{|H|})$ is the Lebesgue measure on $\R_+$ implies that, for the occupation measure,  the coordinates of  $(V_N^{H}(t))$ remains in a finite neighborhood of $0{\in}\N^H$ with high probability. This is the rigorous formulation of the ``$O(1)$ property'' for $(V_N^{H}(t)$).

The convergence~\eqref{eqCVM} is equivalent to  the convergence in distribution
\begin{equation}\label{eqCVt0}
  \lim_{N\to+\infty}  \left(\left(\overline{V}^{H^c}_{N}(t), t{<}T\right), \Lambda^{H,T}_{N}\right)=((v(t),t{<}T), \Lambda^{H,T}_\infty), 
\end{equation}
for all $T{>}0$, where  $\Lambda^{H,T}_{N}$ is defined by Relation~\eqref{OccT}. 

The proof of the stability of $H$ is achieved in two steps. First prove that a convergence~\eqref{eqCVt0} holds for some $T{=}t_0{>}0$. The second step investigates the stability properties of an equilibrium of $(v(t))$ in $O_P$. The following intuitive proposition shows that this is enough to prove the stability of $H$. 
\begin{proposition}\label{StabProp}
For a subset $H$ of  ${\cal C}_P$, if there exist a non-empty open subset $O_P$ in the interior of $\R_+^{|H^c|}$ and $t_0{>}0$  such that,
  \begin{enumerate}
    \item For any $v_0^{H^c}{\in}O_P$ and $v^H_0{\in}\N^{|H|}$, if the initial conditions of $(V_N(t))$ satisfy Relation~\eqref{CVIn}, then the convergence in distribution 
\[
\lim_{N\to+\infty}  \left(\left(\overline{V}^{H^c}_{N}(t),t{<}t_0\right), \Lambda^{H,t_0}_{N}\right)=((v(t),t<t_0), \Lambda^{t_0}_\infty),
\]
holds, where $v{:}[0,t_0){\to}O_P$ is the solution of an ODE and  $\Lambda_\infty$ is a random measure on $\R_+{\times}\N^{|H|}$, such that, almost surely,  $\Lambda^{t_0}_{\infty}(\diff t, \N^{|H|})$ is the Lebesgue measure on $[0,t_0]$;
\item There exists a {\em stable} equilibrium point of the dynamical system $(v(t))$ in $O_P$;
  \end{enumerate}
then $H$ is stable for the sequence of Markov processes $(V_N(t))$
\end{proposition}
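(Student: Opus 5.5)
We first fix the open set and the limiting trajectory. Let $v_*\in O_P$ be the stable equilibrium given by~(2). Stability gives a neighbourhood $O'_P\subset O_P$ of $v_*$ such that every solution of the ODE started in $O'_P$ stays in a compact subset $K\subset O_P$ for all times. Replacing $O_P$ by a smaller open set if necessary, we may assume $O'_P$ is forward invariant: take a sublevel set of a Lyapunov function, or for asymptotic stability the union of the forward orbits of a small ball. The candidate set in Definition~\ref{Order-Def} is then $O'_P$, and the candidate limit is the global solution $(v(t))$ of the ODE. This solution exists on all of $\R_+$ because it remains in $K$.

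The core step is an induction over successive time windows, using the Markov property at deterministic times. The ODE vector field is locally Lipschitz on $O_P$, and $K$ is compact. We want the time $t_0$ of~(1) to be uniform over starting points in a neighbourhood of $K$. So the first step is to check that the proof giving~(1) yields a uniform $t_0$, or to deduce uniformity from~(1) by compactness. This uniformity is the main obstacle. The $t_0$ in~(1) may a priori depend on $v_0^{H^c}$ and on the initial value $v^H_0\in\N^{|H|}$, whereas in the induction the $O(1)$ coordinates at time $t_0$ are random, only tight, and not deterministic. I would handle this in two moves.

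First, the convergence of the occupation measure, with a limit whose projection on time is Lebesgue measure, gives the following. For $\eps>0$ there is a finite set $F\subset\N^{|H|}$ and times $s\in[t_0/2,t_0]$ with $\P(V_N^H(s)\notin F)\le\eps$ for large $N$. Indeed,
\[
\E\int_{t_0/2}^{t_0}\ind{V_N^H(u)\notin F}\diff u
\]
is small, so there exist good times $s$. Second, we restart at such a random-but-tight state. We condition on $V_N^H(s)=w\in F$, and $\overline V_N^{H^c}(s)$ converges in probability to $v(s)\in K$. By the strong Markov property, and using~(1) for each of the finitely many $w\in F$ together with a diagonal or subsequence argument for initial scaled states converging to $v(s)$, we obtain convergence on $[s,s+t_0)$. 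If $t_0$ is not uniform, I would cover $K$ by finitely many neighbourhoods, each with its own $t_0$. This uses continuity of the convergence in the initial condition, which I would justify by a coupling or a tightness-plus-uniqueness argument: any limit point solves the ODE with initial value $v(s)$.

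Finally we glue the windows. There are finitely many windows of length at least $t_0/2$ covering $[0,T]$. Convergence in distribution of the slow coordinates holds on each window, and the limit is deterministic, so it is convergence in probability. This gives convergence in probability on $[0,T]$ in the Skorohod (indeed uniform) topology. The occupation measures $\Lambda^{H,T}_N$ are tight, since their time marginals are Lebesgue and the $\N^{|H|}$-mass is tight window by window. Their limit points restrict on each window to the limits obtained there, so the limit's time projection is Lebesgue on $[0,T]$. We let $\eps\to0$ and apply this for every $T$. This gives~\eqref{eqCVt0} for all $T$, hence~\eqref{eqCVM}, and $H$ is stable.
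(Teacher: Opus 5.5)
Your proposal is essentially the paper's argument. The paper also uses the stability of $v_*$ to get a forward-invariant neighbourhood $U_0$, extracts from the Lebesgue time-marginal of $\Lambda^{t_0}_\infty$ a finite set $K_0$ that is visited during $(s_0,t_0)$ with high probability, and then restarts via hypothesis~(1) and induction; the only difference is that it restarts at the stopping time $\nu_N=\inf\{t>s_0: V_N^H(t)\in K_0\}$ using the strong Markov property, rather than at deterministic times. If you keep deterministic restart times, let them depend on $N$: the bound on $\E\int_{t_0/2}^{t_0}\ind{V_N^H(u)\notin F}\diff u$ gives good times for each $N$ separately, not one time $s$ that is good for all large $N$. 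Note also that the uniformity of $t_0$ you single out as the main obstacle is already part of hypothesis~(1), since $t_0$ is chosen there before $v_0^{H^c}$ and $v_0^H$.
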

\begin{proof}
If $v_*$ is a stable point of $(v(t)$ in $O_P$, the stability property gives that  there exists an open neighborhood $U_0$ of $v_*$ such that if $v^0{\in}U_0$ then $v(t){\in}U_0$ for all $t{\ge}0$. See Definition~1 of Chapter~9 of~\citet{Hirsch} and Exercise~1 page~191, for example. \\
The condition $V^H_{N}(0){=}v^H{\in}\N^{|H|}$, for any $N{\ge}1$, for the convergence can clearly be  replaced by   $V^H_{N}(0){\in}K_0$, for any $N{\ge}1$, where $K_0$ is an arbitrary subset of $\N^{|H|}$.\\
We assume that $v_*{\in}U_0$ and we fix $s_0{\in}(0,t_0)$. Since, almost surely,
\[
\Lambda_\infty([s_0,t_0],\N^{|H|}){=}(t_0{-}s_0){>}0,
\]
there exists a finite subset $K_0$ of $\N^{|H|}$ and $\eta{>}0$ such that 
\[
\lim_{N\to+\infty} \P\left(\int_{s_0}^{t_0}\ind{V_N^{H}(s){\in}K_0}\diff s  > \eta\right)=1.
\]
Let
\[
\nu_N=\inf\{t>s_0: V^H_{N}(t){\in} K_0\},
\]
then the above relation gives that the sequence $(\P(\nu_N{<}t_0))$ converges to $1$. The state of $(V_N(t))$ at time $\nu_N$ satisfies the assumptions on the initial state for the convergence in distribution of the killed scaled process and the killed occupation measure. By the strong Markov property, by considering the process $(V_N(\nu_N{+}t))$, the convergence in distribution can be extended on the time interval $(0,2s_0)$. By induction, the convergence is then extended on $[0,{+}\infty)$. The proposition is proved. 
\end{proof}

\subsection{Networks of $M/M/\infty$ queues}\label{MMISec}
We introduce a classical birth and death process, the $M/M/\infty$ queue.  some technical results for this process play an important role in the proof of our convergence results.  See Chapter~6 of~\citet{Robert}. In Section~\ref{M1Sec}, the non-degenerate part of the fast processes of two cases is represented as a network of two $M/M/\infty$ queues. As it will be seen at the end of this section, its invariant distribution does not have a product form expression, nevertheless it has an explicit formulation. 
\begin{definition}\label{defMMI}
The ${M/M/\infty}$ queue with input rate $a{\ge}0$ and service rate $b{>}0$ is a  Markov process  on $\N$ with transition rates, for $x{\in}\N$,
\[
x\longrightarrow
\begin{cases}
x{+}1&   a,\\
x{-}1&   b x.
\end{cases}
\]
Its invariant distribution is $\Pois{a/b}$, a Poisson distribution with parameter $a/b$.
\end{definition}
An ${M/M/\infty}$ queue with input rate $a$ and service rate $b$ can be represented as
\begin{center}
\begin{figure}[h]
  \resizebox{4cm}{!}{%
    \begin{tikzpicture}[->,node distance=10mm]

\node(M) at (-2,0){$\emptyset$};
\node[black,rectangle,draw](L) at (0,0){${S}$};
\node(N) at (2,0){};

\path (M) edge [black,left,midway,above] node {$a$} (L);
\path (L) edge [black,left,midway,above] node {$b$} (N);
\end{tikzpicture}}
\caption{$M/M/\infty$ queue with input rate $a$ and service rate $b$}\label{FigF0}
\end{figure}
\end{center}
The state of an $M/M/\infty$ queue corresponds to the state of the  basic CRN
\[
\emptyset\mathrel{\mathop{\xrightleftharpoons[b]{a}}} {S}
\]
We now state some useful technical results on the $M/M/\infty$ queue. We begin with the following elementary lemma. 
\begin{lemma}\label{MMILower}
  If $(L(t))$ is the process of an ${M/M/\infty}$ queue with input rate $a{\ge}0$ and service rate $b{>}0$, if $L(0){=}N$, then there exists some $\eta{>}0$ and $t_0{>}0  $ such that
\begin{equation}\label{EqLemMMI}
\lim_{N\to+\infty} \P\left(\inf_{s\leq t_0}\frac{L(s)}{N}\ge \eta \right)=1.
\end{equation}
\end{lemma}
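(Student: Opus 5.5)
The idea is to ignore the arrivals and compare $(L(t))$ from below with a pure death process. Since arrivals only increase the queue, I would build a coupling in which $L(t)\ge D(t)$ for all $t\ge 0$. Here $(D(t))$ is the number of initial customers still present at time $t$. In the $M/M/\infty$ queue each of the $N$ initial customers leaves independently after an exponential time with parameter $b$. Hence
\[
D(t)=\sum_{i=1}^N \ind{E_i>t},
\]
with $(E_i)$ i.i.d.\ exponential random variables with parameter $b$.

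The same coupling can be written with the Poisson-process representation of the paper. Let ${\cal P}_a$ and ${\cal P}_b$ be independent Poisson processes on $\R_+^2$, and set
\[
\diff L(t)={\cal P}_a((0,a),\diff t)-{\cal P}_b((0,bL(t-)),\diff t),
\qquad
\diff D(t)=-{\cal P}_b((0,bD(t-)),\diff t).
\]
By induction on the jump times, $D(t)\le L(t)$ holds for all $t$. Indeed, as long as $D\le L$, every death of $D$ is a point under $bD(t-)\le bL(t-)$. Either it is also a death of $L$, or $D$ is strictly smaller than $L$, so the order is preserved.

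Since $D(t)$ is non-increasing, we get
\[
\inf_{s\le t_0} L(s)\ge D(t_0)
=\sum_{i=1}^N \ind{E_i>t_0}
\sim \mathrm{Bin}\bigl(N,e^{-bt_0}\bigr).
\]
By the law of large numbers, $D(t_0)/N$ converges in probability to $e^{-bt_0}$. Fix any $t_0>0$ and take $\eta=e^{-bt_0}/2$. Then
\[
\P\left(\inf_{s\le t_0} \frac{L(s)}{N}\ge\eta\right)\ge \P\left(\frac{D(t_0)}{N}\ge \frac{e^{-bt_0}}{2}\right)\longrightarrow 1,
\]
and Chebyshev's inequality gives the rate if needed. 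This proves Relation~\eqref{EqLemMMI}. In fact it holds for every $t_0>0$, with $\eta$ depending on $t_0$.

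The only delicate point is justifying the monotone coupling. The cleanest route is the explicit construction of the $M/M/\infty$ queue as independent customers with exponential service times, which makes $L\ge D$ hold pathwise by definition. Everything else is elementary.
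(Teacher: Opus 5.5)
Your proof is correct and is essentially the paper's argument: bound $\inf_{s\le t_0}L(s)$ from below by the number of initial customers whose exponential service times exceed $t_0$, then apply the law of large numbers with any $\eta<e^{-bt_0}$. One wording fix in your Poisson-process coupling: the case split should be on deaths of $L$, not of $D$. A death of $D$ is automatically a death of $L$, and a death of $L$ alone can only occur when $D(t-)<L(t-)$, so $D\le L$ is preserved.
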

\begin{proof}
  If $(E_i,i{=}1,\ldots,N)$ is a vector of i.i.d. exponential random variables with parameter $b$, corresponding to the service times of $N$ initial customers, then
  \[
  \inf_{s\leq t_0}\frac{L(s)}{N}\ge \frac{1}{N}\sum_{i=1}^N\ind{E_i{\ge} t_0},
  \]
  and the lemma follows from the law of large numbers if $t_0$ and $\eta$ are chosen so that $\eta{<}\exp({-}bt_0)$. 
\end{proof}
The following classical proposition on the hitting times of the $M/M/\infty$ queue  is an important result that we will used repeatedly in the article. See Proposition~6.10 of~\citet{Robert}.
\begin{proposition}\label{MMIHit}
Let $(L(t))$ be the process of an $M/M/\infty$ queue with input rate $a{>}0$ and service rate $b{>}0$ with $L(0){=}\ell{\in}\N$, then for any  $\delta{>}0$, $T{>}0$,
    \[
    \lim_{N\to+\infty}\P\left(\sup_{t\leq T}\frac{L(N^\delta t)}{N}\ge 1\right)=0.
    \]
\end{proposition}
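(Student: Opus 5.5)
To prove Proposition~\ref{MMIHit}, I will bound the probability that $(L(t))$ reaches level $N$ before time $N^\delta T$ by a union bound over excursions from a fixed low level. The key input is the invariant distribution $\Pois{\rho}$ with $\rho{=}a/b$: under it, level $N$ carries mass $\rho^N e^{-\rho}/N!$, which decays superexponentially. A polynomial time horizon $N^\delta T$ therefore cannot compensate for it.

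\textbf{Reduction to a stationary process.} By the monotonicity of the $M/M/\infty$ queue in its initial state, I couple so that $L(t)\le \ell{+}L_0(t)$. Here $(L_0(t))$ is the queue started empty, and the $\ell$ initial customers only ever decrease. Since $\ell$ is fixed, it suffices to show that $\P(\sup_{t\le N^\delta T} L_0(t)\ge N/2)\to 0$.

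Alternatively, I start $L_0$ from its stationary law $\Pois{\rho}$ and dominate the empty-start queue by the stationary one. This domination holds because $0$ is stochastically smallest and the dynamics are monotone.

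\textbf{Counting upcrossings.} Let $K{=}\lceil N/2\rceil$. Each time the stationary process exceeds level $K{-}1$ during $[0,N^\delta T]$, there is an up-jump from $K{-}1$ to $K$, or the process starts at or above $K$. By stationarity and the compensator formula, the expected number of jumps $K{-}1\to K$ in $[0,t]$ equals $a\,t\,\pi(K{-}1)$, with $\pi{=}\Pois{\rho}$. Hence
\[
\P\Bigl(\sup_{s\le N^\delta T}L_0(s)\ge K\Bigr)\le \pi([K,\infty))+aN^\delta T\,\pi(K{-}1).
\]
Stirling's formula gives $\pi(K{-}1)\le \rho^{K-1}/(K{-}1)!\le \exp(-cN\log N)$ for large $N$. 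The tail $\pi([K,\infty))$ is bounded in the same way. Both terms therefore vanish even after multiplication by $N^\delta$, which proves the claim.

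\textbf{Main obstacle.} The only delicate point is justifying the expected count of upcrossings, i.e.\ the Lévy-system or compensator identity $\E[\#\{s\le t: L(s{-}){=}K{-}1,\ L(s){=}K\}]=a\int_0^t\P(L(s){=}K{-}1)\diff s$. This is standard for Poisson-driven representations like Relation~\eqref{SDE}: the up-jumps come from a Poisson process of rate $a$, independent of the past state. I will state it directly in that form.
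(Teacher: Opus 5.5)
Your proof is correct, but it takes a different route from the paper. The paper gives no argument of its own. It refers to Proposition~6.10 of~\citet{Robert}, that is, to the asymptotics of the hitting time of level $N$ by the $M/M/\infty$ queue, which is superexponentially large, of the order of $(N{-}1)!/\rho^{N}$. The statement follows because the polynomial horizon $N^\delta T$ is negligible compared with this time.

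You replace this with a first-moment estimate. Your two couplings compose; they are not alternatives. Together they give $L(t)\le \ell{+}\widetilde{L}(t)$ with $\widetilde{L}$ stationary, which you can also obtain in one step by adding independent $\Pois{\rho}$ initial customers to the same arrival stream. The jumps are $\pm1$ and $\ind{\widetilde{L}(u-)=K-1}$ is predictable with respect to the rate-$a$ Poisson arrival process. Hence $\P(\sup_{u\le t}\widetilde{L}(u)\ge K)\le \pi([K,\infty)){+}a\,t\,\pi(K{-}1)$, and Stirling's formula finishes the proof.

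What your route buys:
\begin{enumerate}
\item It is self-contained and uses only the Poisson invariant law and the compensator of a Poisson process.
\item It gives an explicit, non-asymptotic bound. Taking $K{=}N{-}\ell$ instead of $\lceil N/2\rceil$, it holds for any horizon $t_N$ with $t_N\rho^{N-\ell-1}/(N{-}\ell{-}1)!\to0$.
\item It applies verbatim to any process dominated by a stationary one whose invariant law has superexponentially small tails.
\end{enumerate}
The cited hitting-time result gives finer information, namely the precise order of magnitude of the hitting time of level $N$. That is more than this proposition requires.
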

The following network of $M/M/\infty$ queues plays an important role in two cases of our study. See Section~\ref{M1Sec} and also in the case of~\ref{M1RSec} of the Appendix. It is associated to the fast processes of the averaging principle proved in these sections. 
\begin{center}
\begin{figure}[h]
\resizebox{4cm}{!}{%
\begin{tikzpicture}[->,node distance=20mm]
\node(L0) at (-2,0){};
\node[scale=0.04](L1) at (-1,0){};
\node(L2) at (-2,-1){};
\node(L3) at (-2,1){};
\node[black,rectangle,draw](X2) at (0,1){${ S}$};
\node[scale=0.75] (X2O) at (2,1){};
\node[black,rectangle,draw](UA) at (0,-1){${ E}$};
\node (UAO) at (2,-1){};
\path (L0) edge [black,left,above] node {$a_I$} (L1);
\path (L1) edge [black,midway,above] node {} (X2);
\path (L1) edge [black,midway,above] node {} (UA);
\path (L2) edge [black,left,above] node {$a_E$} (UA);
\path (L3) edge [black,left,above] node {$a_S$} (X2);
\path (X2) edge [black,left,above] node {$b_S$} (X2O);
\path (UA) edge [black,left,above] node {$b_E$} (UAO);
\end{tikzpicture}}
\caption{The $M/M/\infty$ Network}\label{FigF1}
\end{figure}
\end{center}
\begin{proposition}\label{MMN}[Network of $M/M/\infty$ queues]\label{MMNet}
For $C{=}(a_I,a_S,b_S,a_E,b_E){\in}(0,{+}\infty)^5$, the invariant probability distribution $\pi_C$ of the Markov process $(L(t)){=}(L_S(t),L_E(t))$ on $\N^2$ whose $Q$-matrix is given by, for $x{=}(x_S,x_E){\in}\N^2$, 
\begin{equation}\label{QMat}
x{\longrightarrow} x{+}
\begin{cases}
e_S{+}e_E,& a_I,\\
 e_S,&a_S,\\
 {-}e_S,&b_Sx_S,\\
\end{cases}
\qquad x{+}
\begin{cases}
e_E,& a_E,\\
{-}e_E,&b_Ex_E,\\
\end{cases}
\end{equation}
is such that
\begin{multline}\label{InvDistMMN}
  \croc{\pi_C,F}=\int_{\N^3}F(u+w,v+w)\\\Pois{\frac{a_S}{b_S}{+}\frac{a_I}{b_S}\frac{b_E}{(b_E{+}b_S)}}(\diff u)\Pois{\frac{a_E}{b_E}{+}\frac{a_I}{b_E}\frac{b_S}{(b_E{+}b_S)}}(\diff v)\Pois{\frac{a_I}{b_E{+}b_S}}(\diff w)
\end{multline}
for any $F{\in}{\cal C}_c(\N^2)$. In particular,
\begin{equation}\label{MMIExp}
  \int_{\N^2} x_S\pi_{C}(\diff x_S,\diff x_E)=\frac{a_I{+}a_S}{b_S}.
\end{equation}
\end{proposition}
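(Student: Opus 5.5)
The plan is to construct the stationary process explicitly, by splitting customers according to their type, rather than by solving the balance equations directly. The network of Figure~\ref{FigF1} is a system of independent infinite-server stations fed by Poisson flows. Three kinds of arrivals occur:
\begin{enumerate}
\item arrivals at rate $a_S$ that join only $S$;
\item arrivals at rate $a_E$ that join only $E$;
\item paired arrivals at rate $a_I$ that put one customer in $S$ and one in $E$ simultaneously.
\end{enumerate}
Each customer then leaves independently after an exponential time, with parameter $b_S$ in $S$ and $b_E$ in $E$. Since there is no interaction, the state at time $t$ started from the empty state is a sum of independent contributions from these three Poisson input streams. This gives a Poisson-process (marking/thinning) description of the transient distribution, and letting $t\to\infty$ identifies the invariant law. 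Because the $Q$-matrix~\eqref{QMat} is that of a non-explosive irreducible chain on $\N^2$, this limit is the unique invariant distribution $\pi_C$.

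For the single-type streams, the number of $a_S$-customers present at stationarity is $\Pois{a_S/b_S}$, by Definition~\ref{defMMI}, and similarly the number of $a_E$-customers is $\Pois{a_E/b_E}$. For the paired stream, each pair arriving at time $-s$ (in stationary time reversal) is marked by $(\ind{\text{S-part present}},\ind{\text{E-part present}})$. The two parts are present independently, with probabilities $e^{-b_Ss}$ and $e^{-b_Es}$. By the marking theorem for Poisson processes, the numbers of pairs with both parts present, with only the $S$-part present, and with only the $E$-part present are independent Poisson variables. Their means are
\[
a_I\int_0^\infty e^{-(b_S+b_E)s}\diff s=\frac{a_I}{b_S+b_E},\qquad a_I\int_0^\infty e^{-b_Ss}(1-e^{-b_Es})\diff s=\frac{a_I}{b_S}\frac{b_E}{b_S+b_E},
\]
and symmetrically $\frac{a_I}{b_E}\frac{b_S}{b_S+b_E}$ for the $E$-only pairs.

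Summing the independent Poisson components, $L_S=u+w$ and $L_E=v+w$. Here $w$ counts the pairs with both parts present. The variable $u$ collects the $a_S$-customers together with the $S$-only pair remnants, so it is Poisson with the parameter stated in~\eqref{InvDistMMN}; similarly for $v$. All three are independent, which gives~\eqref{InvDistMMN}. Relation~\eqref{MMIExp} then follows by summing means: $\frac{a_S}{b_S}+\frac{a_I b_E}{b_S(b_S+b_E)}+\frac{a_I}{b_S+b_E}=\frac{a_S+a_I}{b_S}$.

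The main point requiring care is the rigorous justification that the construction is a version of the Markov chain with $Q$-matrix~\eqref{QMat}, together with uniqueness of the invariant law. As a fallback, one can verify directly that the proposed $\pi_C$ satisfies $\pi_C Q=0$ via generating functions. The stationary generating function $G(z_1,z_2)=\E[z_1^{L_S}z_2^{L_E}]$ must satisfy
\[
a_I(z_1z_2-1)G+a_S(z_1-1)G+a_E(z_2-1)G=b_S(z_1-1)\partial_{z_1}G+b_E(z_2-1)\partial_{z_2}G.
\]
One then checks that $G=\exp\bigl(\rho_u(z_1-1)+\rho_v(z_2-1)+\rho_w(z_1z_2-1)\bigr)$, with the three parameters above, solves it. Writing $z_1z_2-1=(z_1-1)+(z_2-1)+(z_1-1)(z_2-1)$ and matching the coefficients of $(z_1-1)$, $(z_2-1)$ and $(z_1-1)(z_2-1)$ gives three linear identities, which I expect to hold by the computations above. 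Positive recurrence follows, for instance, from the Lyapunov function $x_S+x_E$.
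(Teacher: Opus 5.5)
Your proposal is correct and follows essentially the paper's route: the paper also splits $(L(t))$ into the paired-arrival process plus two independent $M/M/\infty$ queues, and represents the paired part by a Poisson process with independent exponential marks shifted to $(-\infty,0]$ (coupling from the past). The paper finishes by computing the Laplace transform of this marked Poisson functional; your marking-theorem argument on the three disjoint regions (both parts present, $S$-only, $E$-only) is the same computation in a more transparent form, and your generating-function check is a valid independent verification.
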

The probability distribution $\pi_C$ does not has a product form but has nevertheless an explicit representation.
\begin{proof}
  Let $(L_I(t)){=}(L_1^I(t),L_2^I(t))$ be the Markov process starting from $(0,0)$, whose $Q$-matrix is given by Relation~\eqref{QMat} when $a_S{=}a_E{=}0$. Let $(L_S^+(t)$ and $(L_E^+(t))$ be two independent processes independent of $(L_I(t))$ and such that $(L_S^+(t))$, resp. $(L_E^+(t))$,  is an $M/M/\infty$ queue starting from $0$  with input rate $a_S$, resp.  $a_E$, and service rate $b_S$, resp. $b_E$. The invariant distribution of $(L_S^+(t))$, resp. $(L_E^+(t))$, is Poisson with parameter $a_S/b_S$, resp $a_E/b_E$. 

It is easily seen that, if $L(0){=}(0,0)$, then the process  $(L(t))$ has the same distribution as $(L_1^I(t){+}L_S^+(t),L_2^I(t){+}L_E^+(t))$,  by comparing the respective $Q$-matrices of these two Markov processes,. We are thus left to express the invariant distribution of $(L_I(t))$.

We will use the {\em coupling-from-the-past method}. The idea consists, via a convenient probabilistic representation of these stochastic processes,  in starting the process $(L(s))$ at time ${-}t$ and to study its state at time $0$. In this way, the distribution of the shifted process at time $0$ has the same distribution as the original process at time $t$. With an appropriate representation and provided that the process has convenient properties, like monotonicity for example, the state at time $0$ of the shifted process may converge {\em almost surely} as $t$ goes to inﬁnity.  In this case an explicit representation of the limit, and therefore of the limiting distribution,  is obtained.  See~\citet{Loynes} for one of the early use of this method, \citet{Peres} for a presentation of a variant, the {\em Propp-Wilson} algorithm,  and~\citet{Rob2019} for a use similar to the one done in this paper.   

Let ${\cal N}{=}(t_n,u_n,v_n)$ be a Poisson process on $\R{\times}\R_+^2$ with intensity measure
  \[
  a_I\diff t\otimes b_S e^{-b_Su}\diff u\otimes b_E e^{-b_Ev}\diff v.
  \]
The sequence $(t_n)$ is a Poisson process on $\R$ with rate  $a_I$ and $(u_n)$, resp. $(v_n)$,  is a sequence of i.i.d. exponential random variables with parameter $b_S$, resp. $b_E$. These three sequences are independent. See Proposition~1.11 of~\citet{Robert}.
Since $L(0)=(0,0)$ then, clearly, the relation
\begin{multline*}
  L_I(t)=\left(\sum_{n}\ind{0\le t_n\le t< t_n{+}u_n},\sum_{n}\ind{0\le t_n\le t< t_n{+}v_n}\right)\\
  \left(\int_{[0, t]\times \R_+^2}\ind{0\le s\le t< s{+}u} {\cal N}(\diff s,\diff u, \diff v),\int_{[0, t]\times \R_+^2}\ind{0\le s\le t< s{+}v} {\cal N}(\diff s,\diff u, \diff v)\right)
\end{multline*}
holds. For $t{>}0$ fixed, by  invariance of the distribution of the Poisson process ${\cal N}$ by the mappings $(s,u,v){\mapsto}(s{-}t,u,v)$ and $(s,u,v){\mapsto}({-}s,u,v)$, we have 
\begin{multline*}
  L_I(t)\steq{dist}  \left(\int_{[-t, 0]\times \R_+^2}\hspace{-4mm}\ind{ s\le 0< s{+}u} {\cal N}(\diff s,\diff u, \diff v),\int_{[-t, 0]\times \R_+^2}\hspace{-4mm}\ind{s\le 0< s{+}v} {\cal N}(\diff s,\diff u, \diff v)\right)\\
  \stackrel{t\to+\infty}{\longrightarrow} 
  \left(\int_{(-\infty, 0]\times \R_+^2}\hspace{-4mm}\ind{ s\le 0< s{+}u} {\cal N}(\diff s,\diff u, \diff v),\int_{(-\infty, 0]\times \R_+^2}\hspace{-4mm}\ind{s\le 0< s{+}v} {\cal N}(\diff s,\diff u, \diff v)\right)\\
  \steq{dist}
    L_I^\infty \steq{def}\left(\int_{[0, +\infty)\times \R_+^2}\hspace{-4mm}\ind{  0\le s< u} {\cal N}(\diff s,\diff u, \diff v),\int_{[0, +\infty)\times \R_+^2}\hspace{-4mm}\ind{0\le s< v} {\cal N}(\diff s,\diff u, \diff v)\right).
\end{multline*}
See Proposition~1.12 of~\citet{Robert}. The law of the random variables of the right-hand side of the last expression is therefore the invariant distribution of $(L_I(t))$. The proof is concluded by calculating the Laplace transform of $L_I^\infty$, by using the formula for the Laplace transform of the Poisson process ${\cal N}$: If $f$ is a non-negative Borelian function on $\R{\times}\R_+^2$, then
\begin{multline*}
  \E\left(\exp\left(-\int_{\R{\times}\R_+^2}f(s,u,v){\cal N}(\diff s,\diff u,\diff v)\right)\right)\\
  =\exp\left(-\int_{\R{\times}\R_+^2}\left(1{-}e^{-f(s,u,v)}\right)  a_I\diff s b_S e^{-b_Su}\diff u b_E e^{-b_Ev}\diff v\right).
\end{multline*}
See Proposition~1.5 of~\citet{Robert}.
\end{proof}
\section{Underloaded Case, $e{+}f{<}1$}\label{M1Sec}
In this section we consider three cases for the subset $H$
\begin{enumerate}
\item $H{=}\{{ S}_2,{ S}_3, { A},  { AS_2}, { BS_3}\}$;
\item $H{=}\{{ A}, { B}\}$;
\item $H{=}\{{ S}_1, { S}_2,  { AS_1},{ B},  { BS_2}\}$;
\end{enumerate}
For each of these regimes, a convergence theorem on a convenient finite time interval $[0,t_0)$ is established as well as a stability property of an equilibrium of the limiting dynamical system. The stability of $H$ is a consequence of Proposition~\ref{StabProp}. Only a proof of the theorem of case~(1) is carried out. The other theorems are proved with similar arguments. 

Section~\ref{M1LSec} is for case~(1), Section~\ref{M1MSec} is for case~(2) and, since it is very similar to case~(1),  Section~\ref{M1RSec} of the appendix is just a quick formulation for the case~(3).

\subsection{Weight on the Left}\label{M1LSec}
\addcontentsline{toc}{section}{ \ \thesubsection\quad Weight on the Left}
We will show  that the set $H{=}\{{ S}_2,{ S}_3, { A},  { AS_2}, { BS_3}\}$ is stable under the condition $\lambda_1e{<}\mu_2 f$. 

The following theorem establishes a convergence in distribution  result on a finite time interval  for the set $H$.
\begin{theorem}\label{ThM1L}
Under the condition $e{+}f{<}1$, if $H{=}\{{ S}_2,{ S}_3, { A},  { AS_2}, { BS_3}\}$ and the initial condition is 
\[
V_N^H(0)=(x_2,x_3,u_A,a_2,b_1)\in\N^5, \quad V_N^{H^c}(0)=(x_1^N,a_1^N,u_B^N,b_2^N),
\]
such that 
\[
\lim_{N\to+\infty}\frac{V_N^{H^c}(0)}{N}=(x_1^0,a_1^0,u_B^0,b_2^0)\steq{def}(1{-}e{-}b_2^0,e,f{-}b_2^0,b_2^0),
\]
for some $b_2^0{\in}(0,f)$, then there exists $t_0{>}0$ such that, for the convergence in distribution
\begin{multline}\label{CVWL}
\lim_{N\to+\infty}\left(\left(\frac{V_N^{H^c}(t)}{N},t{<}t_0\right),\Lambda^{H,t_0}_N\right) \\=\left((v(t),t{<}t_0),\Lambda^{t_0}_{\infty}\right)=
\left(\left((1{-}e{-}{b}_2(t),e,f{-}{b}_2(t),{b}_2(t)), t{<}t_0\right),\Lambda^{t_0}_\infty\right),
\end{multline}
where
\[
{b}_2(t)=b_2^0\exp(-\mu_2 t)+\frac{\lambda_1 e}{\mu_2}\left(1{-}\exp(-\mu_2 t)\right),
\]
 and $\Lambda_\infty$ is the random measure defined by, for $F{\in}{\cal C}_c(\R_+{\times}\N^5)$, 
\begin{multline*}
\int_{\R_+{\times}\N^5} F(s,\underbrace{(x_2,x_3,u_A,a_2,b_1)}_{v^H})\Lambda_{\infty}(\diff s, \diff v^H)\\ =\int_{\R_+{\times}\N^3} F(s,(x_2,0,u_A,L_{1}(s),L_{2}(s)))\pi_{C(s)}(\diff x_2,\diff u_A)\diff s
\end{multline*}
holds almost surely, where, for $C{\in}(0,{+}\infty)^5$,  $\pi_{C}$ is the invariant measure of Proposition~\ref{MMNet}, and, for $t{\ge}0$, 
\begin{equation}\label{eqCt}
C(t){=}\left(\rule{0mm}{5mm}\lambda_1e,\beta_2^-{b}_2(t),\beta_2(f{-}{b}_2(t)),\alpha_1^-e,\alpha_1(1{-}e{-}{b}_2(t))\right).
\end{equation}
The stochastic process $(L_1(t)),L_2(t))$ is associated to a {\em non-homogeneous network} of $M/M/\infty$ queues, with initial state $(a_2,b_1)$ and in state $\ell{=}(\ell_1,\ell_2)$ at time $t$, its transition rates are 
\begin{equation}\label{SlowTrans}
    \ell{\longrightarrow}   \ell {+} \begin{cases}
\scriptstyle{e_{1}}, &\scriptstyle{\alpha_2r_L(t)},\\
\scriptstyle{{-}e_{1}}, &\scriptstyle{\alpha_2^-\ell_1},\\
\scriptstyle{{-}e_{2}}, &\scriptstyle{\mu_1\ell_2},\\
\scriptstyle{e_{2}{-}e_{1}}, &\scriptstyle{\lambda_2\ell_1},
   \end{cases}
\qquad   \text{ with }   r_L(t)=\int_{\N^2} x_2u_A\pi_{C(t)}(\diff x_2,\diff u_A).
\end{equation}
\end{theorem}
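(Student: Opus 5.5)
The proof is an averaging principle in the spirit of \citet{Kurtz1992}, carried out up to a stopping time on which the $O(1)$ coordinates are controlled by couplings with $M/M/\infty$ queues.

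\textbf{Step 1: localisation.} Fix $\eps>0$ small and $K>0$. Define $\tau_N$ as the first time one of the following happens:
\begin{itemize}
\item[---] $X_1^N(t)/N$, $A_1^N(t)/N$, $U_B^N(t)/N$ or $B_2^N(t)/N$ leaves an $\eps$-neighborhood of its initial limit;
\item[---] $A_2^N(t)+B_1^N(t)+X_3^N(t)$ exceeds $N^{\delta}$ for some fixed $\delta\in(0,1)$.
\end{itemize}
Lemma~\ref{MMILower} applied to the queues $A_1^N$, $B_2^N$, $X_1^N$ and $U_B^N$ gives a $t_0>0$ with $\P(\tau_N>t_0)\to1$, as far as the large coordinates are concerned. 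Each of them dominates an $M/M/\infty$ queue started at order $N$, since each loses mass at a rate at most linear in itself times a constant of order $N$ or $1$.

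\textbf{Step 2: couplings on $[0,\tau_N\wedge t_0]$.} On this interval the rates are controlled.
\begin{itemize}
\item[---] The pair $(X_2^N,U_A^N)$ is stochastically dominated by the network of Proposition~\ref{MMNet}. Its parameters are the worst case of $C(t)$ over the $\eps$-box, with extra arrivals at rate $O(N^\delta)$ coming from $\mu_1B_1$ and $\alpha_2^-A_2$. The service rates $b_S=\alpha_2U_A+\beta_2U_B\ge\beta_2 (f-b_2^0-\eps)N$ and $b_E=\alpha_1X_1\ge c N$ are of order $N$. Arrivals are also of order $N$: $\lambda_1A_1$ is joint, $\beta_2^-B_2$ feeds $S$, and $\alpha_1^-A_1$ feeds $E$.
\item[---] After a time change by $N$, $(X_2^N,U_A^N)$ is a fast process. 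Proposition~\ref{MMIHit} shows it stays below $N^{\delta}$, so $\tau_N$ is not triggered by it.
\item[---] $X_3^N$ has arrivals at rate $\lambda_2A_2+\beta_1^-B_1=O(N^\delta)$ and is killed at rate $\beta_1U_B\ge cN$. Hence $\int_0^t X_3^N(s)\diff s\to0$, which gives the $0$ coordinate in $\Lambda_\infty$.
\item[---] $(A_2^N,B_1^N)$ receives jumps at rate $\alpha_2X_2^NU_A^N$, a product of two $O(1)$ quantities. These coordinates are therefore slow $O(1)$ variables, dominated by a queue with bounded input. They stay $O(1)$ on $[0,t_0]$, which closes the localisation.
\end{itemize}

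\textbf{Step 3: tightness and identification of limits.} Write the SDEs~\eqref{SDE} for the large coordinates. For instance, $B_2^N/N$ has drift $\beta_2U_B^NX_2^N/N-(\beta_2^-+\mu_2)B_2^N/N$, and the conservation laws give $\mu_2$-decay towards $\lambda_1e/\mu_2$ after averaging. The martingale parts vanish by Doob's inequality.
\begin{itemize}
\item[---] The terms involving $X_2^N$ and $U_A^N$ are handled by averaging. The occupation measure of $(X_2^N(Nt),U_A^N(Nt))$ is tight thanks to the Step 2 domination. Its limit is characterised through the generator on functions of $(x_2,u_A)$: at scale $N$ the dynamics is exactly the network $Q$-matrix~\eqref{QMat} with parameters $C(t)$. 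Uniqueness of the invariant law in Proposition~\ref{MMNet} then identifies $\pi_{C(t)}$.
\item[---] Relation~\eqref{MMIExp} gives $\int x_2\,\diff\pi_{C(t)}$. The balance equation
\[
\beta_2(f-b_2)\E_\pi(x_2)\approx \lambda_1 e+\beta_2^- b_2
\]
then yields $\dot b_2=\lambda_1e-\mu_2b_2$.
\item[---] $A_1^N/N\to e$ because $U_A^N,A_2^N=O(1)$.
\end{itemize}

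\textbf{Step 4: the slow $O(1)$ pair.} For $(A_2^N,B_1^N)$, the jump rate $\alpha_2X_2^NU_A^N$ converges in the averaged sense to $\alpha_2r_L(t)$. The pair converges, jointly with the rest, to the non-homogeneous network~\eqref{SlowTrans}. This produces the random component of $\Lambda_\infty$.

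\textbf{Main obstacle.} I expect the hardest step to be the Step 2 coupling. The dominating $M/M/\infty$ networks must be defined on the same Poisson space as~\eqref{SDE}, using the Poisson processes $\mathcal P_\kappa$ with thinned intensities, so that stochastic calculus still applies to the other coordinates. Closing the bootstrap between $\tau_N$ and the $N^\delta$ bounds is the delicate part of this.
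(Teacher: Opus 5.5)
Most of your plan follows the paper's proof. The shared steps are:
\begin{itemize}
\item the stopping time $\tau_N$;
\item domination of $(X_2^N,U_A^N)$ by the network of Proposition~\ref{MMNet} sped up by $N$;
\item control of $(A_2^N,X_3^N,B_1^N)$ through the number of $\alpha_2$-reactions;
\item identification of the fast occupation measure as $\pi_{C(t)}\,\diff t$, and of $\dot b_2=\lambda_1e-\mu_2b_2$ via Relation~\eqref{MMIExp};
\item a jump-by-jump argument for the slow pair.
\end{itemize}

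The step that fails as written is Step~1 for $X_1^N$ and $U_B^N$. Lemma~\ref{MMILower} needs input and per-customer service rates that do not depend on $N$. It does apply to $A_1^N$ and $B_2^N$, whose per-capita loss rates are the constants $\lambda_1{+}\alpha_1^-$ and $\mu_2{+}\beta_2^-$. But $X_1^N$ is consumed at per-capita rate $\alpha_1U_A^N(t)$, and $U_B^N$ at per-capita rate $\beta_2X_2^N(t){+}\beta_1X_3^N(t)$. Here $U_A^N$ and $X_2^N$ are fast $O(1)$ processes that are not bounded by any constant on $[0,t_0]$: they make order-$N$ excursions, and their suprema tend to infinity with $N$. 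So there is no dominating queue with a fixed service rate. If you bound them by $E_N$ or $F_N$, the service rate is of order $N$, and the lemma says nothing on a fixed interval. The lemma also gives no upper bounds, which your $\eps$-neighbourhood form of $\tau_N$ requires (e.g.\ $B_2^N$ grows at rate $\beta_2U_B^NX_2^N$).

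What controls these two coordinates is a time integral, not a pointwise rate.
\begin{itemize}
\item For $X_1^N$, the paper couples $U_A^N\le W_A^N=Q_0(N\cdot)$ on $[0,\tau_N)$ and applies the ergodic theorem. This bounds the loss of $X_1^N$ on $[0,t]$ by $O(Nt)$, giving a linear lower bound of the form in Relation~\eqref{Coupeq1}.
\item For $U_B^N$, it uses $I_1^N\le s_0{+}S_N^0$ and $I_2^N\le J_2^N$, with $J_2^N(t)/N\to C_Ut$.
\end{itemize}
In this form the bound must be proved on $[0,\tau_N]$ jointly with your Step~2 domination, since the service rate of the dominating queue for $U_A^N$ needs $X_1^N\ge\eps N$. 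So Step~1 cannot be settled before Step~2.

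A simpler repair is to count reactions. Each $\alpha_1$-reaction consumes a free ${ A}$, and free ${ A}$'s are released at total rate at most a constant times $E_N$. Hence $X_1^N$ loses at most $u_A$ plus a Poisson variable of mean $O(Nt)$. Likewise, each $\beta_2$-reaction, resp.\ $\beta_1$-reaction, consumes an ${ S}_2$, resp.\ ${ S}_3$, and these are produced at total rate $O(N)$. This handles $U_B^N$ and all the upper bounds with no coupling.

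With this repair the rest of your argument goes through as in the paper. Your $N^\delta$ clause in $\tau_N$ is harmless but unnecessary. The paper simply bounds the extra inputs $\mu_1B_1^N$, $\alpha_2^-A_2^N$, $\lambda_2A_2^N$ by $O(N)$, and the dominating network absorbs them.
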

The expression of the limit $\Lambda_\infty$ involves a random component, the process $(L_1(t),L_2(t))$.
\begin{proof}

We fix some $\eps{>}0$, which will be determined later, and define $\tau_N$ as the first instant when one of the coordinates of $(V_N^{H^c}(t))$,
is below $\eps N$: 
  \[
  \tau_N\steq{def}\inf\{t{>}0: \min(X_1^N(t),A_1^N(t),U_B^N(t),B_2^N(t))<\eps N\}.
  \]
The strategy of the proof is simple. It consists in the construction of couplings up to time $\tau_N$ of several sets of coordinates with specific, easier to study, processes. 
One of the difficulties is of taking care of the various dependencies implied by these couplings. Recall that there are {\em twelve} Poisson processes driving the time evolution of $(V_N(t))$. We will also use stochastic calculus arguments, for this reason these couplings have to be constructed explicitly in the current probability space.

These couplings will give that there exists some $t_0{>}0$ such that the relation $\tau_N{\ge}t_0$ holds with high probability when $N$ gets large, and that the sequence of  random variables $((\overline{V}_N^{H^c}(t{\wedge}t_0)),(\Lambda_N^{H,t_0}))$ is tight for the convergence in distribution. There is an averaging principle to be proved for which the fast variables are  $(U_A^N(t),X_2^N(t))$.
 The ``slow'' and $O(1)$ variables $(A_2^N(t),X_3^N(t),B_1^N(t))$ are studied separately.
The proof is concluded with the help of Proposition~\ref{MMNet}  and standard arguments to identify the possible limits.

To ease the reading of the proof (hopefully), we have divided it as a set of eight subsections on specific issues. The last part of the theorem on the process  $(L_1(t),L_2(t))$ as an asymptotic description of the slow $O(1)$ processes $(A_2^N(t),B_1^N(t))$ is proved in Section~\ref{O1SSec} of the appendix.

\subsubsection{The process $(U_A^N(t))$}\label{CoupUASec}
Two couplings are achieved for this process. The first one is carried out in full details, and is used to illustrate the future couplings arguments of the article. It is used in the next Section to study the growth of the process $(X_1^N(t))$. See Section~\ref{CoupX1Sec}. The second one controls controls the sequence of two-dimensional processes $(X_2^N(t), U_A^N(t))$, and is carried out with less details. See Section~\ref{CoupX2Sec}.

Let $\delta_0=\max(\alpha_1^-,\lambda_1,\alpha_2^-,\lambda_2)$ and $(W_A^N(t))$ the solution of the SDE
\begin{multline}\label{WA}
\diff W_A^N(t)=\cal{P}_{\lambda_1}((0, \delta_0 A_1^N(t{-})), \diff t) 
{+}\cal{P}_{\alpha_1^-}((0, \delta_0 A_1^N(t{-})), \diff t) \\
{+}\cal{P}_{\lambda_2}((0, \delta_0 A_2^N(t{-})), \diff t)
{+}\cal{P}_{\alpha_2^-}((0, \delta_0 A_2^N(t{-})), \diff t)\\
{+}\cal{P}_{\delta_0}((0, 2\delta_0 (E_N{-}A_1^N(t{-}){-}A_2^N(t{-}))), \diff t)
{-}\cal{P}_{\alpha_1}((0,\eps\alpha_1NW_A^N(t{-})), \diff t),
\end{multline}
with $W_A^N(0){=}u_A$, where $\cal{P}_{\delta_0}$ is a Poisson process independent of the Poisson processes $({\cal P}_\kappa, \kappa{\in}{\cal R}_P)$, and with the same distribution. 

It is easily seen that $(W_A^N(t)){=}(Q_{0}(Nt))$, where $(Q_0(t))$ is the process of an $M/M/\infty$ queue with input rate $2\delta_0$ and service rate $\eps\alpha_1$.  By induction on the instants of successive jumps of the process $(V_N(t),W_A^N(t))$, we get that the relation $U_A^N(t){\le}W_A^N(t)$ holds for all $t{<}\tau_N$.

\subsubsection{The  process $(X_1^N(t))$}\label{CoupX1Sec}
For $t{<}\tau_N$, the coupling of Section~\ref{CoupUASec} and the SDEs~\eqref{SDE} give the relation
\[
  X_1^N(t)\ge x_1^N-\int_{(0,t]}{\cal P}_{\alpha_1}((0,\eps \alpha_1NU_A^N(s)),\diff s)\ge  Y_1^N(t)
      \]
      with
\[
 Y_1^N(t)\steq{def} x_1^N{-}\int_{(0,t)}\cal{P}_{\alpha_1}((0,\eps\alpha_1NW_A^N(s{-})), \diff s).
\]
Since $(W_A^N(t))$ is an adapted \cadlag process, the process 
\[
 \left(\frac{1}{N}\int_{(0,t)}\left(\cal{P}_{\alpha_1}((0,\eps\alpha_1NW_A^N(s{-})), \diff t)-\eps\alpha_1 N W_A^N(s)\diff s\right)\right)
\]
is a martingale whose previsible increasing process is
\[
\left(\frac{\eps\alpha_1}{N^2}\int_0^{Nt} Q_0(s)\diff s\right).
\]
The ergodic theorem for the positive recurrent Markov process $(Q_0(t))$, with Doob's Inequality, gives  the convergence in distribution,
\begin{equation}\label{Coupeq1}
\lim_{N\to+\infty}  \left(\frac{Y_1^N(t)}{N}\right)=\left(x_1^0-2\delta_0t\right).
\end{equation}

\subsubsection{The process $(X_2^N(t))$}\label{CoupX2Sec}
We now proceed with the control of the second coordinate in the same way as for $(U_A^N(t))$. Because of the dependence of some of the jumps of $(U_A^N(t))$ and $(X_2^N(t))$, via the reaction with rate $\lambda_1$ for example, we will achieve it by a coupling of both coordinates. The coupling construction follows the same method as for $(U_A^N(t))$. Let $(Q_1^N(t),Q_2^N(t))$ be a Markov process on $\N^2$ starting from $(u_A,x_2)$ with the following set of transitions and rates
\[
(q_1,q_2)\longrightarrow
\begin{cases}
  (q_1+1,q_2)&(\alpha_1^-{+}\alpha_2^-{+}\lambda_2)N\\
  (q_1-1,q_2)&\eps\alpha_1N q_1\\
  (q_1+1,q_2+1)&\lambda_1 N
\end{cases}
\qquad
\begin{cases}
  (q_1,q_2+1)&(\mu_1+\beta_2^-)N\\
  (q_1,q_2-1)&\eps\beta_2N q_2.
\end{cases}
\]
By checking the possible jumps of both processes, one can construct a coupling  of $(Q_1^N(t),Q_2^N(t))$ such that
$U_A^N(t){\le} Q_1^N(t)$ and $X_2^N(t){\le}Q_2^N(t)$ for all $t{\le}\tau_N$. Note that
$(Q_1^N(t),Q_2^N(t)){=}(Q_1(Nt),Q_2(Nt)))$ where $(Q_1(t),Q_2(t))$ is the Markov process associated to the network of $M/M/\infty$ queues of Proposition~\ref{MMNet} for the parameter $C_Q{=}(\lambda_1,\alpha_1^-{+}\alpha_2^-{+}\lambda_2,\alpha_1\eps,\mu_1+\beta_2^-,\beta_2\eps)$.

\subsubsection{The other $O(1)$ processes}\label{CoupO1Sec}
In view of Relations~\eqref{SDE}, for $t{>}0$,  the random variables $A_2^N(t)$, $X_3^N(t)$ and  $B_1^N(t)$ can be upper-bounded by $s_0{=}a_2{+}x_3{+}b_1$, plus the number of jumps of the chemical reaction $A{+}S_2{\rightharpoonup}AS_2$ between $0$ and $t$. Hence, on the event $\{t{<}\tau_N\}$,  the relation
\begin{multline}\label{CoupIneq1}
\max(A_2^N(t),X_3^N(t),B_1^N(t))\\\leq s_0{+}\int_{(0,t]}\cal{P}_{\alpha_2}((0,U_A^N(s{-})X_2^N(s{-})), \diff s)\le  s_0{+}S^0_N(t),
\end{multline}
with
\[
    S^0_N(t)\steq{def} \int_{(0,t]}\hspace{-4mm}\cal{P}_{\alpha_2}((0,\alpha_2Q_1(Ns{-})Q_2(Ns{-})), \diff s).
\]
holds. 

The Markov process  $(Q_1(t),Q_2(t))$ being positive recurrent with invariant distribution $\pi_{C_Q}$, see Proposition~\ref{MMNet}, it is easily checked that
\[
\int_{\N^2}u_Ax_2\pi_{C_Q}(\diff u_A,\diff x_2) < {+}\infty.
\]
Since,
\[
\E\left(S_N^0(t)\right)=\alpha_2\E\left(\frac{1}{N}\int_0^{Nt}Q_1(s)Q_2(s)\diff s\right),
\]
the ergodic theorem for Markov processes gives the relation, for $T{>}0$, 
\[
\limsup_{N\to+\infty} \sup_{t\le T} \E\left(S_N^0(t)\right)\le\alpha_2T\int_{\N^2}u_Ax_2\pi_{C_Q}(\diff u_A,\diff x_2).
\]
By gathering the above estimations, we  have thus obtained that the constant
\begin{equation}\label{CoupO1Eq}
C_0(T)\steq{def}\sup_{N}\sup_{t\leq T} \E\left(\left(U_A^N(t){+}X_2^N(t){+}A_2^N(t){+}X_3^N(t){+}B_1^N(t)\right) \ind{T<\tau_N}\right) 
\end{equation}
is finite. 
\subsubsection{The processes $(A_1^N(t))$ and $(B_2^N(t))$}\label{CoupA1Sec}
It is easily seen that there is a coupling of these two processes with two $M/M/\infty$ queues $(Q_3(t))$ and $(Q_4(t))$ with input rate $0$ and with respective service rates  $\lambda_1{+}\alpha_1^-$ and $\mu_2{+}\beta_2^-$, such that, $A_1^N(t){\ge}Q_3(t)$ and $A_1^N(t)\ge Q_4(t)$ holds for all $t{\ge}0$. 
\subsubsection{The process $(U_B^N(t))$}
For $t{<}\tau_N$, the SDEs~\eqref{SDE} give the relation, for $t{<}\tau_N$,
\begin{multline}\label{EqCoupUB}
U_B^N(t) \ge u_B^N-\int_{(0,t]}{\cal P}_{\beta_1}((0, \beta_1U_B^N(s{-})  X_3^N(s{-})),\diff s)\\
-\int_{(0,t]}{\cal P}_{\beta_2}\left(\left(0,\beta_2U_B^N(s{-}) X_2^N(s{-})\right),\diff s\right).
\end{multline}
The first integral $(I_1^N(t))$ of the right-hand side of the last relation is the number of jumps due the Poisson process ${\cal P}_{\beta_1}$ between $0$ and $t$. With the same argument as in Section~\ref{CoupO1Sec}, we have
\[
I_1^N(t)\steq{def}\int_{(0,t]}{\cal P}_{\beta_1}((0, \beta_1U_B^N(s{-})  X_3^N(s{-})),\diff s)\le s_0{+}  S_N^0(t),
  \]
  where $(S_N^0(t))$  is defined by Relation~\eqref{CoupIneq1}.
  It is not difficult to prove that $(S_N^0(t)/N)$ converges in distribution to $0$.

  For the second integral $(I_2^N(t))$,
\begin{multline*}
  I_2^N(t)\steq{def}\int_{(0,t]}{\cal P}_{\beta_2}\left(\left(0,\beta_2U_B^N(s{-}) X_2^N(s{-})\right),\diff s\right)\\ 
\le  J_2^N(t)\steq{def} \int_{(0,t]}{\cal P}_{\beta_2}\left(\left(0,\beta_1E_N Q_2(Ns{-})\right),\diff s\right).
\end{multline*}
   With the same arguments as in Section~\ref{CoupX1Sec}, there exists some constant $C_U$ such that, for the convergence in distribution,
   \[
   \lim_{N\to+\infty} \left(\frac{J_2^N(t)}{N}\right)=(C_Ut). 
   \]
   Now we gather the above estimations based on couplings valid up to time $t$. On the time interval $[0,\tau_N]$ we have stochastic lower bounds of
   \begin{itemize}
   \item $(X_1^N(t))$ by $(Y_1^N(t))$ and Relation~\eqref{Coupeq1};
   \item $(A_1^N(t))$ and $(B_2^N(t))$ in Section~\ref{CoupA1Sec} by using Lemma~\ref{MMILower}, and Relation~\eqref{EqCoupUB} for $(U_B^N(t))$.
   \end{itemize}
We can therefore find $\eps{>}0$ and $t_0>0$ depending only on $(x_1^0,a_1^0,u_B^0,b_2^0)$ such that all these stochastic lower bounds are all greater than $\eps$ on the time interval $[0,t_0]$ with high probability as $N$ gets large. For example for $(X_1^N(t))$, due to Relation~\eqref{Coupeq1}, the condition $t_0{\in}(0,(x_1^0{-}\eps)/4\delta_0)$ is sufficient. We obtain that, for a convenient $t_0{>}0$,  the sequence $(\P(\tau_N{\ge}t_0))$ is converging to $1$ as $N$ gets large.
\subsubsection{Tightness Properties}\label{CoupFast}
   We first prove that the sequence $(\Lambda_N^{H,t_0})$ is tight. Let, for $K{>}0$,  ${\cal K}{=}[0,K]^5$. Then, for any $t{<}t_0$,
\begin{multline*}
   \E\left(\Lambda_N^{H,t_0}\left([0,t]{\times}{\cal K}^c\right)\right)
   \le t_0\P(\tau_N<t_0)\\+\int_0^{t}\P\left(U_A^N(s){+}X_2^N(s){+}A_2^N(s){+}X_3^N(s){+}B_1^N(s)\ge \frac{K}{5},\tau_N>t_0\right)\diff s
\\   \le t_0\P(\tau_N<t_0)+\frac{5C_0(t_0)}{K}t,
\end{multline*}
by Relation~\eqref{CoupO1Eq} in Section~\ref{CoupO1Sec}. We conclude that $(\Lambda_N^{H,t_0})$ is tight by using Lemma~1.3 of~\citet{Kurtz1992}. Lemma~1.4 of this reference gives that any limiting point $\Lambda_{\infty,t_0}$ of $(\Lambda_N^{H,t_0})$ can be represented as, for $F{\in}{\cal C}_c(\R_+{\times}\N^5)$, 
\begin{equation}\label{GamEq}
\croc{\Lambda_{\infty,t_0},F}=\int_{\R_+{\times}\N^5} F(s,v)\gamma_s(\diff v)\diff s,
\end{equation}
almost surely, where $(\gamma_s)$ is an optional process with values in ${\cal M}_P(\N^5)$. 
To simplify the expressions (to avoid subsequences), we assume that $(\Lambda_N^{H,t_0})$ is converging in distribution to such a random measure $\Lambda_{\infty,t_0}$. 

The SDEs~\eqref{SDE} and straightforward stochastic calculus give, with the notations of Relation~\eqref{ScalNot},  the relation
\begin{equation}\label{B2Eq}
\overline{B}_2^N(t)=\frac{b_2^N}{N}+M_N(t){+}\beta_2\int_0^t\overline{U}_B^N(s)X_2^N(s)\diff s-(\mu_2{+}\beta_2^-)\int_0^t\overline{B}_2^N(s)\diff s,
\end{equation}
for $t\ge 0$, where $(M_N(t))$ is a martingale whose previsible increasing process is given by
\[
\left({M}_N(t)\right)=\left(\frac{1}{N}\left(\beta_2\int_0^t\overline{U}_B^N(s)X_2^N(s)\diff s{+}(\mu_2{+}\beta_2^-)\int_0^t\overline{B}_2^N(s)\diff s\right)\right). 
\]
With Relation~\eqref{eqCons2}, we have $\overline{U}_B^N(t){+}\overline{B}_2^N(t){\le}F_N/N$, the inequality $X_2^N(t){\le} Q_2(Nt)$ is valid for any $t{\le}\tau_N$ of Section~\ref{CoupX2Sec},  and Doob's Inequality give that the martingale $(M_N(t),t{<}t_0)$  is converging in distribution to $0$. With the criterion of the modulus of continuity, see Theorem~7.3 of~\citet{Billingsley}, we get that the sequence of processes $(\overline{B}_2^N(t),t{<}t_0)$ is tight for the convergence in distribution and any of its limiting point  $(b_2(t))$ is a continuous process.

As before, we assume that the sequence of processes $(\overline{B}_2^N(t),t{<}t_0)$ is converging in distribution to some process $(b_2(t))$. 
The relations  $U_A^N(t){\le} Q_1(Nt)$ and $X_2^N(t){\le}Q_2(Nt)$ for all $t{\le}\tau_N$ of Section~\ref{CoupX2Sec}, with Proposition~\ref{MMIHit} and Relation~\eqref{CoupIneq1} give the convergence in distribution
\begin{equation}\label{CoupO1Ineq1}
\lim_{N\to+\infty}\left(\frac{X_2^N(t)}{N},\frac{X_3^N(t)}{N},\frac{U_A^N(t)}{N},\frac{A_2^N(t)}{N},\frac{B_1^N(t)}{N}, 0\le t<t_0\right)=(0).
\end{equation}
The relations of  mass conservation~\eqref{eqCons1} and~\eqref{eqCons2} give therefore the tightness property of $(\overline{V}_N^{H^c}(t))$, and  the convergence
\begin{multline}\label{CvM1LT}
\lim_{N\to+\infty}\left(\left(\overline{X}_1^N(t),\overline{A}_1^N(t),\overline{U}_B^N(t),\overline{B}_2^N(t)\right),t{<}t_0\right)\\=
((1-e-b_2(t),e,f-b_2(t),b_2(t)), t{<}t_0),
\end{multline}
when $(b_2(t))$ is the limiting point of $(\overline{B}_2^N(t))$.

\subsubsection{Fast $O(1)$ Variables}\label{O1FSec}
The $O(1)$ variables $(X_2^N(t), U_A^N(t))$ are ``fast''. For simplicity, we give their transition rates at $t=0$,
\begin{equation}\label{O1SHEq1}
v{\longrightarrow}   v{+} 
\begin{cases}
\scriptstyle{e_{U_A}{+}e_{X_2}}, &\scriptstyle{\lambda_1a_1^N},\\
\scriptstyle{e_{U_A}}, & \scriptstyle{\alpha_1^-a_1^N},\\
\scriptstyle{{-}e_{U_A}}, &\scriptstyle{\alpha_1 u_Ax_1^N},
\end{cases}
\quad
\begin{cases}
\scriptstyle{{-}e_{X_2}}, &\scriptstyle{\beta_2x_2u_B^N},\\
\scriptstyle{e_{X_2}}, &\scriptstyle{\beta_2^-b_2^N}.
\end{cases}
\end{equation}
At time $t$, $a_1^N$ is replaced by $A_1^N(t)$ and similarly for the other slow variables. 

The occupation measure associated to $(X_2^N(t), U_A^N(t))$ is defined as $\Xi_N$ such that, for $G{\in}{\cal C}_c([0,t_0){\times}\N^2)$,
\begin{equation}\label{XiN}
\croc{\Xi_N,G}=\int_0^{t_0} G(s,X_2^N(s), U_A^N(s))\diff s. 
\end{equation}
Provided that Relation~\eqref{CvM1LT} holds, following the standard method of~\citet{Kurtz1992}, we get that
$(\Xi_N)$ is converging in distribution to $\Xi_\infty$ defined by
\[
\croc{\Xi_\infty,G}=\int_0^{t_0} G(s,x_2, u_A)\pi_{C(s)}(\diff x_2, \diff u_A)\diff s,
\]
almost surely, $(C(t))$ is given by relation~\eqref{eqCt}. We give a sketch of the method: If $f$ is a function on $\N^2$ with finite support, with the SDEs~\eqref{SDE}, we  write the It\^o's formula for  $f(X_2^N(t), U_A^N(t))$, this relation is divided by  $N$ and, when $N$  go to infinity, this gives a characterization of the limit $\Xi_\infty$ in terms of the invariant distribution of the Markov process~\eqref{O1SHEq1}. See the proofs of Proposition~14 of~\cite{LR24-2}, or of Theorem~5 of~\cite{LR24} for an illustration of this method. 

This gives a characterization of the image of the process $(\gamma_t)$ defined by Relation~\eqref{GamEq}  by the projection on the coordinates $(x_2, u_A)$ as the continuous process $(\pi_{C(t)})$. 

By letting $N$ go to infinity in Relation~\eqref{B2Eq} and using the convergence in distribution of $(\Xi_N)$, Relation~\eqref{MMIExp} and the identity~\eqref{CvM1LT},
\[
b_2(t)=b_2^0+\lambda_1e t-\mu_2\int_0^t b_2(s)\diff s
\]
holds. This implies the convergence in distribution of $(\overline{B}_2^N(t))$  on the time interval $[0,t_0)$ and therefore of the process $(\overline{V}_N^{H^c}(t))$ by Relation~\eqref{CvM1LT}.
  
To conclude the proof of the theorem, the asymptotic behavior of the random variables  $(X_3^N(t),A_2^N(t),B_1^N(t))$ is investigated in Section~\ref{O1SSec} of the Appendix. 
\end{proof}
\begin{corollary}
Under the conditions of Theorem~\ref{ThM1L} if $\lambda_1e{<}\mu_2 f$ then the subset $H{=}\{{ S}_2,{ S}_3, { A},  { AS_2}, { BS_3}\}$ is stable for the sequence of Markov processes $(V_N(t))$. 
\end{corollary}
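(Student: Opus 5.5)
The plan is to apply Proposition~\ref{StabProp} with the open set
\[
O_P=\left\{(1{-}e{-}b_2,e,f{-}b_2,b_2): b_2\in(0,f),\ 1{-}e{-}b_2>0\right\},
\]
viewed in the relevant coordinates. Since the limiting dynamics live on the one-dimensional curve parametrized by $b_2$, the open set is understood relative to the constraint set fixed by the conservation relations and the limit $A_1^N/N\to e$.

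Condition~(1) of Proposition~\ref{StabProp} is exactly Theorem~\ref{ThM1L}. For any $b_2^0\in(0,f)$ with $1{-}e{-}b_2^0>0$, it gives $t_0>0$ and the convergence in distribution of the scaled slow variables and of the killed occupation measure. The limit $\Lambda^{t_0}_\infty$ has time marginal equal to Lebesgue measure, because $\pi_{C(s)}$ is a probability distribution and $(L_1(s),L_2(s))$ takes values in $\N^2$.

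For condition~(2), I will study the limiting dynamical system, which is governed by the scalar linear ODE
\[
\dot b_2(t)=\lambda_1 e-\mu_2 b_2(t).
\]
Its unique equilibrium is $b_2^*=\lambda_1e/\mu_2$, and it is globally exponentially stable since $\mu_2>0$. I will check that the corresponding point
\[
v_*=\left(1{-}e{-}b_2^*,\,e,\,f{-}b_2^*,\,b_2^*\right)
\]
lies in $O_P$. The bound $b_2^*>0$ is trivial, and $b_2^*<f$ is precisely the hypothesis $\lambda_1e<\mu_2f$. For $1{-}e{-}b_2^*>0$, note that $b_2^*<f$ together with $e{+}f<1$ gives $1{-}e{-}b_2^*>1{-}e{-}f>0$.

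Choose a small interval $I=(b_2^*{-}\eta,b_2^*{+}\eta)$ inside the admissible range. Since $b_2(t)$ moves monotonically toward $b_2^*$, the set $U_0=\{v\in O_P: b_2\in I\}$ is forward invariant, which is the stability property required in the proof of Proposition~\ref{StabProp}.

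The main point needing care is the compatibility between the restarting argument in Proposition~\ref{StabProp} and Theorem~\ref{ThM1L}. At the restart time $\nu_N$, one needs $V_N^{H^c}(\nu_N)/N$ to lie close to a point of $O_P$ of the form required by the theorem, namely with $A_1^N/N\approx e$ and $X_1^N/N\approx 1{-}e{-}B_2^N/N$. I will get this from the convergence~\eqref{CvM1LT} on $[0,t_0)$ together with the forward invariance of $U_0$. This allows the theorem to be reapplied uniformly, with $t_0$ depending only on a compact neighbourhood of $v_*$ inside $O_P$. Uniformity holds because $\eps$ and $t_0$ in the proof of Theorem~\ref{ThM1L} depend only on lower bounds of the initial coordinates.

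The $O(1)$ coordinates at the restart time lie in the finite set $K_0$ used in the proof of Proposition~\ref{StabProp}. Iterating then extends the convergence to all of $\R_+$, so $H$ is stable.
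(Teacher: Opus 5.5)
Your proposal is correct and follows the paper's own argument. You invoke Proposition~\ref{StabProp}, with Theorem~\ref{ThM1L} supplying condition~(1) and the linear ODE $\dot b_2=\lambda_1e-\mu_2b_2$ supplying a stable equilibrium $\lambda_1e/\mu_2\in(0,f)$, so that $b_2(t)$ stays in a compact subset of $(0,f)$ and $t_0$ can be taken uniformly (the paper's ``$t_0$ can be arbitrarily large''). Your remarks on the relative openness of $O_P$, on $1-e-b_2^*>1-e-f>0$, and on the uniformity of $\eps,t_0$ over a compact set of initial points simply make explicit what the paper leaves implicit.
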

\begin{proof}
The proof is straightforward, using Proposition~\ref{StabProp}. Because of the limit of the process $(\overline{U}_B^N(t))$, the constant $t_0$ of the above theorem depends of the fact that $b_2^0{\in}(0,f)$. The function $(b_2(t))$ is converging to $\lambda_1 e/\mu_2{\in}(0,f)$ as $t$ goes to infinity, therefore $t_0$ can be arbitrarily large. 
\end{proof}
\subsection{Few Free Enzymes}\label{M1MSec}
\addcontentsline{toc}{section}{ \ \thesubsection\quad Few Free Enzymes}

In Sections~\ref{M1LSec} and~\ref{M1RSec} in the appendix, the regimes investigated involve one species of enzymes which is fully utilized while free enzymes of the other species are $O(N)$. For the regime investigated in this section, the situation is more balanced: Most of enzymes ${ A}$ and ${ B}$ are bound with substrat.

The analogue of Theorems~\ref{ThM1L} and~\ref{ThM1R} in the appendix  is the following convergence result. The ingredients of its proof are similar  to the ones in the proof of Theorem~\ref{ThM1L}. Note that there is no equivalent of the processes $(L_1(t),L_2(t))$ in the asymptotic  description of the occupation measure. 

\begin{theorem}\label{ThM1M}
Under the condition $e{+}f{<}1$, if $H{=}\{{ A}, { B}\}$ and $V_N(0){=}v_N$,  with
\[
V_N^H(0)=(u_A,u_B)\in\N^2, \quad V_N^{H^c}(0)=(x_1^N,x^N_2,x^N_3, a_1^N,a_2^N,b_1^N,b_2^N)\in\N^7,
\]
such that
\[
\lim_{N\to+\infty}\frac{V_N^{H^c}(0)}{N}=v_0^{H^c}=(x_1^0,1{-}e{-}f{-}x_1^0{-}x_3^0,x_3^0,a_1^0,e{-}a_1^0,b_1^0,f{-}b_1^0),
\]
with $(x_1^0,x_3^0,a_1^0,b_1^0){\in}{\cal E}_P$,
\[
{\cal E}_P\steq{def}\{(x_1,x_3,a_1,b_1){\in}(0,1)^2{\times}(0,e){\times}(0,f): x_1{+}x_3<1{-}e{-}f\},
\]
 then there exists $t_0{>}0$ such that, for the convergence in distribution
\begin{multline}\label{CVWM}
  \lim_{N\to+\infty}\left(\left(\frac{V_N^{H^c}(t)}{N}, t{<}t_0\right),\Lambda_N^{H,t_0}\right)=((v(t),t{<}t_0),\Lambda_\infty^{t_0})\\
\steq{def}\left(\left(\left({x}_1(t),{x}_2(t),{x}_3(t),{a}_1(t),{a}_2(t),{b}_1(t),{b}_2(t)\right), t{<}t_0\right),\Lambda_\infty^{t_0}\right)
\end{multline}
where
\begin{enumerate}
  \item  $({x}_1(t), {x}_3(t), {a}_1(t), {b}_2(t))$ is the solution of the ODE 
\begin{equation}\label{ODEM}
\begin{cases}
\dot{{x}}_1(t)&=\mu_2{b}_2(t){+}\alpha_1^-{a}_1(t){-}\alpha_1{x}_1(t) m_A(t),\\ 
\dot{{x}}_3(t)&= \lambda_2 {a}_2(t){+}\beta_1^-{b}_1(t){-}\beta_1{x}_3(t)m_B(t),\\
\dot{{a}}_1(t)&= \alpha_1m_A(t){x}_1(t){-}(\lambda_1{+}\alpha_1^-){a}_1(t),\\
\dot{{b}}_1(t)&=\beta_1m_B(t){x}_3(t)){-}(\mu_1{+}\beta_1^-){b}_1(t),
\end{cases}
\end{equation}
with  initial state $({x}_1^0, {x}_3^0, {a}_1^0, {b}_2^0)$,  and, for $t{\ge}0$,
\[
m_A(t)\steq{def}\frac{(\lambda_1{+}\alpha_1^-) {a}_1(t)+  (\lambda_2 {+}\alpha_2^-){a}_2(t)}{\alpha_1 {x}_1(t)+  \alpha_2 {x}_2(t)},\quad
m_B(t)\steq{def}\frac{(\mu_1{+}\beta_1^-) {b}_1(t)+  (\mu_2{+}\beta_2^-) {b}_2(t)}{\beta_1 {x}_3(t)+  \beta_2 {x}_2(t)}.
\]
and  $x_2(t){=}1{-}e{-}f{-}{x}_1(t){-}{x}_3(t)$, $a_2(t){=}e{-}{a}_1(t)$, $b_2(t){=}f{-}{b}_1(t)$.\\
\item The measure $\Lambda_\infty$ is defined by, for $F{\in}{\cal C}_c(\R_+{\times}\N^2)$, 
\[
  \croc{\Lambda_\infty,F}=\int_{\R_+{\times}\N^2} F(s,u,v)  \Pois{m_A(s)}(\diff u)\Pois{m_B(s)}(\diff v)\diff s,
  \]
  almost surely.
\end{enumerate}

\end{theorem}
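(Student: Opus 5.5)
The proof follows the scheme of the proof of Theorem~\ref{ThM1L}: localize with a stopping time, control the $O(1)$ coordinates by couplings with $M/M/\infty$ queues, prove tightness, and identify the limit through an averaging principle. Here the fast variables are $(U_A^N(t),U_B^N(t))$ and all seven other coordinates are slow and of order $N$. Fix $\eps{>}0$ and let $\tau_N$ be the first time one of $X_1^N,X_2^N,X_3^N,A_1^N,A_2^N,B_1^N,B_2^N$ falls below $\eps N$. Up to $\tau_N$, the process $(U_A^N(t))$ has positive jumps at rate at most $\delta_0 E_N{\le}C N$ (coming from the reactions $\lambda_i,\alpha_i^-$) and negative jumps at rate at least $\eps(\alpha_1{+}\alpha_2)NU_A^N(t)$. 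As with $(W_A^N(t))$ in the proof of Theorem~\ref{ThM1L}, it is therefore dominated by $(Q_A(Nt))$, where $(Q_A(t))$ is an $M/M/\infty$ queue with input rate $C$ and service rate $\eps(\alpha_1{+}\alpha_2)$. The same holds for $(U_B^N(t))$. Proposition~\ref{MMIHit} then gives $\sup_{t<\tau_N}(U_A^N(t){+}U_B^N(t))/N\to0$, and the ergodic theorem gives a uniform bound on $\E(U_A^N(t){+}U_B^N(t))$ on $\{t{<}\tau_N\}$. This is the analogue of Relation~\eqref{CoupO1Eq}.

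Next we show that $\P(\tau_N{\ge}t_0)\to1$. Each slow coordinate $Z^N$ satisfies an SDE whose negative jumps occur at rate at most $C_1Z^N(t)$, for the complexes, or at rate at most $C_2 N U^N(t)$, for the free substrates, with $U^N$ equal to $U_A^N$ or $U_B^N$. In the first case a lower bound by an $M/M/\infty$ queue with no input, via Lemma~\ref{MMILower}, keeps it of order $N$ on a short interval. In the second case the number of negative jumps up to time $t$ is at most $\int_0^t{\cal P}((0,C_2NQ(Ns{-})),\diff s)$, which behaves like $C_3Nt$ by the martingale argument used for Relation~\eqref{Coupeq1}. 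Since the initial point lies in the open set ${\cal E}_P$, all initial coordinates are bounded below by some $2\eps N$, which yields $t_0$. Tightness of $(\Lambda_N^{H,t_0})$ then follows from Lemma~1.3 of~\citet{Kurtz1992}, exactly as in Section~\ref{CoupFast}. Tightness of the scaled slow processes follows from their SDE representation and Doob's inequality, because every drift term is bounded by $C(1{+}U_A^N{+}U_B^N)$, whose time integral is controlled in $L^1$. The modulus of continuity criterion then shows that all limit points are continuous.

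For the identification, the scaled SDEs give, for instance,
\[
\overline{X}_1^N(t)=\overline{x}_1^N+M_N(t)+\int_0^t\left(\mu_2\overline{B}_2^N+\alpha_1^-\overline{A}_1^N-\alpha_1\overline{X}_1^NU_A^N\right)(s)\diff s .
\]
Here the product $\overline{X}_1^NU_A^N$ involves the fast variable. As the slow variables converge along a subsequence to a continuous $v(t)$, the Itô formula for $f(U_A^N,U_B^N)$, with $f$ finitely supported and divided by $N$, yields $\int_0^t\croc{\gamma_s,{\cal L}_{v(s)}f}\diff s=0$. At frozen slow state $v$, the generator ${\cal L}_v$ describes two independent $M/M/\infty$ queues. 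Queue $A$ has input rate $(\lambda_1{+}\alpha_1^-)a_1{+}(\lambda_2{+}\alpha_2^-)a_2$ and service rate $\alpha_1x_1{+}\alpha_2x_2$, and queue $B$ is analogous. By Definition~\ref{defMMI}, $\gamma_s=\Pois{m_A(s)}\otimes\Pois{m_B(s)}$. Uniform integrability of $U_A^N$, obtained from the domination by $Q_A$ with second moments, justifies passing to the limit in the linear terms $\int\overline{X}_1^NU_A^N\diff s\to\int x_1m_A\diff s$. This gives the ODE~\eqref{ODEM}, while the conservation relations give $x_2,a_2,b_2$. Because the right-hand side of~\eqref{ODEM} is locally Lipschitz on the region where all coordinates are at least $\eps$, the ODE has a unique solution. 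This uniqueness yields convergence of the whole sequence and determines $\Lambda_\infty^{t_0}$.

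The main obstacle is the joint localization. Lower bounds on the free substrates $X_1^N,X_2^N,X_3^N$ are needed to make $U_A^N,U_B^N$ fast and $O(1)$, while $O(1)$ control of $U_A^N,U_B^N$ is needed to keep the free substrates of order $N$. This circularity is broken only through the stopping time $\tau_N$ and through couplings built carefully on the common Poisson space, as in Sections~\ref{CoupUASec} and~\ref{CoupX1Sec}. The construction must also handle the fact that $X_2^N$ gains mass from both $A$- and $B$-reactions.
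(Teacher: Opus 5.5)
Your proposal is correct and follows the route the paper indicates; the paper skips this proof as analogous to, and simpler than, that of Theorem~\ref{ThM1L}. As there, you localize with $\tau_N$, dominate $(U_A^N,U_B^N)$ by time-accelerated $M/M/\infty$ queues, keep the order-$N$ coordinates bounded below on $[0,t_0]$, get tightness from Kurtz's lemmas, and identify the fast occupation measure as $\Pois{m_A(s)}\otimes\Pois{m_B(s)}$, with no slow $O(1)$ component needing the separate treatment of the appendix.
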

\begin{proof}
The proof is analogous, even simpler, to the proof of Theorem~\ref{ThM1L}. It is therefore skipped. 
It should be noted that the $O(1)$ processes are all ``fast''  in this case, contrary to Theorem~\ref{ThM1L} and Theorem~\ref{ThM1R} of the appendix. In this case their transitions are, at time $t=0$ and starting from $V_N(0)$,
\[
v^H\longrightarrow v^H{+}
\begin{cases}
e_{U_A}&(\alpha_1^-{+}\lambda_1)a_1^N+(\alpha_2^-{+}\lambda_2)a_2^N,\\
{-}e_{U_A}&(\alpha_1x_1^N{+}\alpha_2x_2^N)u_A,\\
e_{U_B}&(\beta_1^-{+}\mu_1)a_1^N+(\beta_2^-{+}\mu_2)a_2^N,\\
{-}e_{U_B}&(\beta_1x_3^N{+}\beta_2x_2^N)u_B.
\end{cases}
\]
and then evolve slowly, keeping the same order of magnitude, see comments of Relation~\eqref{O1SHEq1}. 
\end{proof}

\begin{proposition}[Equilibrium]\label{EquiM2M}
Under the conditions
\begin{equation}\label{CondM}
e+f < 1 \text{ and }  \left(\mu_1f-\lambda_2e\right)\left(\lambda_1e-\mu_2f\right) >0 
\end{equation}
  there exists a unique equilibrium $v_*{\steq{def}}(x_1^*,x_3^*,a_1^*,b_1^*)$ of the dynamical system~\eqref{ODEM} given by
\begin{equation}\label{EquiM1M}
  \begin{cases}
    a_1^*& \displaystyle =\mu_2\frac{\mu_1f-\lambda_2 e}{\lambda_1\mu_1-\lambda_2\mu_2},\quad b_1^*=\lambda_2\frac{\lambda_1e-\mu_2f}{\lambda_1\mu_1-\lambda_2\mu_2},\medskip\\
  x_1^*  & \displaystyle=\frac{1}{D}\alpha_2  \beta_1  \lambda_1  \mu_2  \left(1{-}e {-}f\right) \left(\mu_1 f {-}\lambda_2e \right)^{2} \left(\mu_2  {+}\beta_2^{-} \right) \left(\lambda_1  {+}\alpha_1^{-} \right),\medskip\\
x_3^* & \displaystyle=\frac{1}{D}\alpha_1  \beta_2  \lambda_2 \mu_1 \left(1{-}e{-}f\right) \left( \lambda_1e  {-} \mu_2f  \right)^{2}   \left(\mu_1  {+}\beta_1^{-} \right) \left(\lambda_2  {+}\alpha_2^{-} \right),
  \end{cases}
\end{equation}
with
\begin{multline*}
  D=\alpha_2  \beta_1  \lambda_1  \mu_2  \left(\lambda_2e {-}\mu_1 f  \right)^{2} \left(\mu_2  {+}\beta_2^{-} \right) \left(\lambda_1  {+}\alpha_1^{-} \right)\\
  {+}\alpha_1  \beta_1  \lambda_1  \mu_1  \left(  \mu_1 f {-}\lambda_2e   \right) \left(\lambda_1e {-} \mu_2f  \right) \left(\mu_2  {+}\beta_2^{-} \right) \left(\lambda_2  {+}\alpha_2^{-} \right)\\{+}\alpha_1  \beta_2  \lambda_2  \mu_1  \left(\lambda_1e  {-} \mu_2f  \right)^{2} \left(\mu_1  {+}\beta_1^{-} \right) \left(\lambda_2  {+}\alpha_2^{-} \right)
\end{multline*}
\end{proposition}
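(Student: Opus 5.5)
The plan is to set the right-hand side of the ODE~\eqref{ODEM} to zero and solve the resulting algebraic system in stages. The conservation relations $x_2{=}1{-}e{-}f{-}x_1{-}x_3$, $a_2{=}e{-}a_1$, $b_2{=}f{-}b_1$ are used throughout, and each step determines the unknowns uniquely, which gives uniqueness. The key observation is that the system decouples: the enzyme-complex variables $(a_1,b_1)$ are determined by a linear system involving only $\lambda_1,\lambda_2,\mu_1,\mu_2,e,f$. The substrate variables are then obtained from the definitions of $m_A$ and $m_B$.

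\emph{Step 1 (complexes).} At equilibrium the third equation gives $\alpha_1 m_A x_1{=}(\lambda_1{+}\alpha_1^-)a_1$. Plugging this into the first equation gives $\mu_2 b_2{=}\lambda_1 a_1$, i.e. $\lambda_1 a_1{=}\mu_2(f{-}b_1)$. Symmetrically, the fourth and second equations give $\beta_1 m_B x_3{=}(\mu_1{+}\beta_1^-)b_1$ and then $\mu_1 b_1{=}\lambda_2(e{-}a_1)$. This $2{\times}2$ linear system has determinant $\lambda_1\mu_1{-}\lambda_2\mu_2$ and yields the stated $a_1^*$ and $b_1^*$. The point to check is that these values are admissible, that is, $0{<}a_1^*{<}e$ and $0{<}b_1^*{<}f$.
\begin{itemize}
\item Under~\eqref{CondM}, the factors $\mu_1f{-}\lambda_2e$ and $\lambda_1e{-}\mu_2f$ have the same sign. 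Multiplying the two corresponding inequalities shows that $\lambda_1\mu_1{-}\lambda_2\mu_2$ has that same sign, and in particular is nonzero.
\item A direct computation gives $e{-}a_1^*{=}\mu_1(\lambda_1e{-}\mu_2f)/(\lambda_1\mu_1{-}\lambda_2\mu_2)$ and $f{-}b_1^*{=}\lambda_1(\mu_1f{-}\lambda_2e)/(\lambda_1\mu_1{-}\lambda_2\mu_2)$.
\end{itemize}
Hence $a_1^*,a_2^*,b_1^*,b_2^*$ are all positive. This is exactly where Condition~\eqref{CondM} is needed.

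\emph{Step 2 (substrates).} Combining $\alpha_1m_Ax_1{=}(\lambda_1{+}\alpha_1^-)a_1$ with the definition of $m_A$, namely $m_A(\alpha_1x_1{+}\alpha_2x_2){=}(\lambda_1{+}\alpha_1^-)a_1{+}(\lambda_2{+}\alpha_2^-)a_2$, gives $\alpha_2 m_A x_2{=}(\lambda_2{+}\alpha_2^-)a_2$. Likewise, the definition of $m_B$ gives $\beta_2 m_B x_2{=}(\mu_2{+}\beta_2^-)b_2$. Setting $p{=}1/m_A$ and $q{=}1/m_B$, we get that $x_1,x_2$ are proportional to $p$ and $x_2,x_3$ to $q$, with known positive coefficients. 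The two expressions for $x_2$ fix the ratio $q/p$. The constraint $x_1{+}x_2{+}x_3{=}1{-}e{-}f{>}0$ is then linear in $p$, so it has a unique positive solution. Conversely, any such $(x_1,x_3,a_1,b_1)$ is an equilibrium in the interior of the state space, with $x_2{>}0$, so that $m_A$ and $m_B$ are well defined.

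\emph{Step 3 (closed form).} It remains to substitute $a_1^*,a_2^*,b_1^*,b_2^*$ into these expressions and clear denominators, which should produce $x_1^*$, $x_3^*$ and the three-term denominator $D$. The common factor $(\lambda_1\mu_1{-}\lambda_2\mu_2)$ must cancel between numerators and denominator. I expect this bookkeeping to be the only real obstacle. To organize it, I would write $x_2{=}(\lambda_2{+}\alpha_2^-)a_2^*p/\alpha_2$ as the pivot and express $x_1$ and $x_3$ as multiples of it. The three terms of $D$ then correspond to the contributions of $x_1$, $x_2$ and $x_3$ to the total mass $1{-}e{-}f$. In particular, the middle term carries the cross factor $(\mu_1f{-}\lambda_2e)(\lambda_1e{-}\mu_2f)$ coming from $a_2^*b_2^*$, and it is positive by~\eqref{CondM}, which confirms $D{>}0$.
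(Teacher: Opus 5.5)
Your proposal is correct and follows essentially the same route as the paper: you set the right-hand side of~\eqref{ODEM} to zero, obtain the linear system $\lambda_1a_1{=}\mu_2(f{-}b_1)$, $\mu_1b_1{=}\lambda_2(e{-}a_1)$, use~\eqref{CondM} to check $a_1^*{\in}(0,e)$ and $b_1^*{\in}(0,f)$, and then solve for $x_1^*$ and $x_3^*$. Your proportionality argument with $p{=}1/m_A$ and $q{=}1/m_B$ spells out the step the paper calls ``some calculations'', including the uniqueness of $(x_1^*,x_3^*)$, and the resulting bookkeeping does reproduce the three terms of $D$ exactly as you describe.
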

\begin{proof}
An equilibrium point $(x_1^*,x_3^*,a_1^*,b_1^*)$  satisfies the relations
\begin{align*}
\mu_2(f{-}b_1^*){+}\alpha_1^-a_1^*&=(\lambda_1{+}\alpha_1^-)a_1^*\\ &=\frac{\alpha_1x_1^*}{\alpha_1 x_1^*+  \alpha_2 (1{-}e{-}f{-}x_1^*{-}x_3^*)}((\lambda_1{+}\alpha_1^-) a_1^*+  (\lambda_2 {+}\alpha_2^-)(e{-}a_1^*)),\\
\lambda_2 (e-a_1^*){+}\beta_1^-b_1^*&=(\mu_1{+}\beta_1^-)b_1^*\\ &=\frac{\beta_1x_3^*}{\beta_1 x_3^*+  \beta_2 (1{-}e{-}f{-}x_1^*{-}x_3^*)}((\mu_1{+}\beta_1^-) b_1^*+  (\mu_2{+}\beta_2^-) (f{-}b_1^*)),
\end{align*}
the expressions for $a_1^*$ and $b_1^*$ follow directly, it is easily checked that  under Conditions~\eqref{CondM}, we have $a_1^*{\in}(0,e)$ and $b_1^*{\in}(0,f)$.

With some calculations we obtain the expressions for  $x_1^*$ and  $x_3^*$.  Conditions~\eqref{CondM} give that $x_1^*$, $x_3^*$ and $D$ are positive,  and, with  the relation
 \begin{multline*}
1{-}e{-}f{-}x_1^*{-}x_3^*\\=\frac{1}{D}\alpha_1  \beta_1  \lambda_1  \mu_1  \left(1{-}e {-}f\right) \left(\lambda_1e  {-}\mu_2f  \right)\left( \mu_1f{-}\lambda_2 e \right) \left(\mu_2  {+}\beta_2^{-} \right) \left(\lambda_2  {+}\alpha_2^{-} \right) >0,
 \end{multline*}
we conclude that the  element $(x_1^*,x_3^*,a_1^*,b_1^*)$ is  an equilibrium point  of the dynamical system $(x_1(t),x_3(t),a_1(t),b_1(t))$ in the state space ${\cal E}_P$.
\end{proof}
We now formulate a consequence of a classical useful analytic result on the location of the  roots of a polynomial of degree~4. It will be sufficient to establish our main stability results. See~\citet{Parks}, \citet{Gantmacher}, or~\citet{Fadali} for example. 

\begin{proposition}[Routh-Hurwitz Criterion]\label{RHCrit}
If $P(x)$ is a polynomial,
\[
P(x)=C_0x^4+C_1x^3+C_2x^2+C_3x+C_4,
\]
the Routh-Hurwitz coefficients $(R_i,1{\le}i{\le}4)$ of $P(x)$ are given by
\[
R_1=C_0, R_2=C_1, R_3=C_4,\quad
R_4=\frac{C_2C_1{-}C_0C_3}{C_1},\quad 
R_5=R_4C_3{-}C_4C_1.
\]
\begin{enumerate}
\item If these coefficients are all positive then all the roots of $P$ have a negative real part.
\item Otherwise, there is at least  a root of $P(x)$ with a positive real part.
\end{enumerate}
\end{proposition}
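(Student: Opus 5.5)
We would not reprove this classical fact from scratch. The plan is to reduce it to the standard Hurwitz-determinant form of the criterion, as found in \citet{Gantmacher} or \citet{Parks}, and to check that $(R_i)$ is just a re-encoding of the first column of the Routh array.

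\emph{Reduction.} For $P(x)=C_0x^4+C_1x^3+C_2x^2+C_3x+C_4$, build the Routh array. Its first column is
\[
C_0,\quad C_1,\quad \frac{C_1C_2-C_0C_3}{C_1}=R_4,\quad \frac{R_4C_3-C_1C_4}{R_4},\quad C_4 .
\]
In terms of the Hurwitz determinants $\Delta_1=C_1$, $\Delta_2=C_1C_2-C_0C_3$, $\Delta_3=C_3\Delta_2-C_1^2C_4$ and $\Delta_4=C_4\Delta_3$, we have $R_4=\Delta_2/C_1$ and $R_5=R_4C_3-C_4C_1=\Delta_3/C_1$. Hence, when $C_0,C_1>0$, the positivity of all the $R_i$ is equivalent to the positivity of $C_0,\Delta_1,\Delta_2,\Delta_3,\Delta_4$. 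The fourth Routh entry is $R_5/R_4$, so it has the sign of $R_5$ whenever $R_4>0$.

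\emph{Part (1).} We give a self-contained homotopy argument. Suppose the five quantities $C_0,C_1,C_4,\Delta_2,\Delta_3$ are positive. A root of $P$ crosses the imaginary axis only in one of two ways.
\begin{itemize}
\item A root passes through $0$. This forces $C_4=0$.
\item A pair $\pm i\omega$ with $\omega>0$ becomes roots. Separating real and imaginary parts of $P(i\omega)=0$ gives $C_0\omega^4-C_2\omega^2+C_4=0$ and $\omega(C3-C_1\omega^2)=0$. So $\omega^2=C_3/C_1$, and substituting yields exactly $\Delta_3=0$.
\end{itemize}
We then check that the set $\{C_0>0,\ C_1>0,\ \Delta_2>0,\ \Delta_3>0,\ C_4>0\}\subset\R^5$ is connected. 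Fix $C_0,C_1>0$. The conditions then describe a region in $(C_2,C_3,C_4)$ that is star-shaped after the scaling $C_2\mapsto C_2+s$, for $s\ge0$, and this scaling increases both $\Delta_2$ and $\Delta_3$. This gives a path from any point of the set to a point near the coefficients of $(x+1)^4$. The degree stays $4$ along the path since $C_0>0$, so roots move continuously and none can cross the imaginary axis. Since $(x+1)^4$ has all roots equal to $-1$, every polynomial in the set has all roots with negative real part.

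\emph{Part (2).} For the converse we invoke Routh's theorem. When no entry of the first column vanishes, the number of roots in the open right half-plane equals the number of sign changes in that column. A negative $R_i$ therefore produces at least one sign change, hence at least one root with positive real part.

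\emph{Main obstacle.} The degenerate cases are the delicate point, where some $R_i=0$ or $C_1=0$ and $R_4$ is then undefined. In these cases a root may lie exactly on the imaginary axis, for instance $\Delta_3=0$ gives roots $\pm i\omega$. Statement~(2) as written is then only true generically. In our application the paper only needs the signs of the $R_i$ at the equilibrium $v_*$ of Proposition~\ref{EquiM2M}, under strict inequalities, so we will state and use (2) under the proviso that all $R_i$ are nonzero.
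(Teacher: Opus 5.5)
\textbf{The gap: your argument for (2) fails, even with your nonvanishing proviso.} You claim that a negative $R_i$ forces a sign change in the Routh column; this is false. The column is $(R_1,R_2,R_4,R_5/R_4,R_3)$, and it has no sign change exactly when all five entries share a sign. The all-negative case corresponds to the sign pattern $(-,-,-,-,+)$ for $(R_1,\dots,R_5)$.

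Concretely, $P(x)=-(x+1)^4$ gives $(R_1,\dots,R_5)=(-1,-4,-1,-5,16)$. These are all nonzero and not all positive, yet every root equals $-1$. So statement (2) itself needs the normalization $C_0>0$, i.e.\ $R_1>0$, and you should add it next to your nondegeneracy proviso. That proviso is itself a correct observation, as $x(x+1)^3$ (where $R_3=0$) and $(x^2+1)(x+1)^2$ (where $R_5=0$) show.

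With $C_0>0$ the step works: absence of a sign change forces every column entry to be positive, hence every $R_i>0$. Routh's counting theorem then gives (2) whenever no $R_i$ vanishes. This is harmless for the paper, because the computations give $R_1>0$ under both (S) and (U) in Proposition~\ref{StabM1M}. In case (U) you do not even need Routh's theorem: $C_0>0>C_4$ yields a positive real root by the intermediate value theorem.

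\textbf{How this compares with the paper.} The paper gives no proof of this proposition; it only cites Parks, Gantmacher and Fadali. Your reduction ($R_4=\Delta_2/C_1$, $R_5=\Delta_3/C_1$) and your homotopy proof of (1) are correct and go beyond the paper, giving a self-contained proof of the sufficiency direction used for case (S).

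\textbf{Making the connectedness step explicit.} You cannot keep $C_0$ and $C_1$ fixed, so the path needs three stages.
\begin{enumerate}
\item Raise $C_2$. This keeps $\Delta_2$ and $\Delta_3$ positive, since $\partial\Delta_3/\partial C_2=C_1C_3$, and $C_3>0$ follows from $C_3\Delta_2>C_1^2C_4>0$.
\item Once $C_2$ is large, move $(C_0,C_1,C_3,C_4)$ along a segment in $(0,\infty)^4$ to $(1,4,4,1)$. This segment is compact, so one large value of $C_2$ keeps $\Delta_2,\Delta_3>0$ along it.
\item Lower $C_2$ to $6$. At these coefficients $\Delta_2=4C_2-4$ and $\Delta_3=16C_2-32$, which stay positive throughout.
\end{enumerate}
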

There is a slight abuse of notation, strictly speaking, the Routh-Hurwitz coefficients are determinants of matrices. See~\citet{Gantmacher}. The coefficients $R_4$ and $R_5$ differ from a (positive) multiplicative constant. We can now state our main stability result of this case. 

\begin{proposition}[Stability of the Equilibrium]\label{StabM1M}
Under the condition $e{+}f {<} 1$ and if, $\alpha_i^-{=}\beta_i^-{=}c$, for all $i{=}1$, $2$, for  $c{\in}\{0,1\}$,  then
  \begin{enumerate}
  \item[(S)] If $\mu_1f{-}\lambda_2e >0$ and $\lambda_1e{-}\mu_2f>0$,\\
    the equilibrium  $v_*$ of Relation~\eqref{EquiM1M} of the dynamical system~\eqref{ODEM} is stable.
  \item[(U)] If $\mu_1f{-}\lambda_2e<0$ and $\lambda_1e{-}\mu_2f<0$,\\
    the equilibrium  $v_*$ is unstable.
  \end{enumerate}
\end{proposition}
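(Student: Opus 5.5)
We linearize the four-dimensional ODE~\eqref{ODEM} at the equilibrium $v_*$ of Proposition~\ref{EquiM2M} and apply the Routh-Hurwitz Criterion of Proposition~\ref{RHCrit} to the characteristic polynomial of the Jacobian. By the Hartman--Grobman / Lyapunov linearization theorem, $v_*$ is (asymptotically) stable if all eigenvalues of the Jacobian $J_*$ at $v_*$ have negative real part, and unstable if one has positive real part. Concretely, we write the vector field in the variables $(x_1,x_3,a_1,b_1)$, substituting $x_2{=}1{-}e{-}f{-}x_1{-}x_3$, $a_2{=}e{-}a_1$, $b_2{=}f{-}b_1$ into $m_A$ and $m_B$. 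Note that in the ODE the $b_2$ equation should be read through $b_1$ via $b_2{=}f{-}b_1$. We then compute the $4{\times}4$ matrix $J_*$ symbolically. The rational functions $m_A,m_B$ give a Jacobian with a common denominator $(\alpha_1x_1^*{+}\alpha_2x_2^*)^2(\beta_1x_3^*{+}\beta_2x_2^*)^2$. We keep only the numerator $P_*(x)=C_0x^4+\cdots+C_4$ of $\det(xI{-}J_*)$ after multiplying by this positive denominator, which does not change the roots.

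The key device is a reparametrization that makes the sign conditions transparent. The three inequalities $e{+}f{<}1$, $\mu_1f{-}\lambda_2e$ and $\lambda_1e{-}\mu_2f$ with prescribed signs are encoded by new positive variables. In case (S) we set
\[
1{-}e{-}f=p,\quad \mu_1 f-\lambda_2 e = q,\quad \lambda_1 e-\mu_2 f = r,
\]
with $p,q,r{>}0$, together with a fourth free positive variable. We then solve this linear system for $(e,f,\mu_1,\lambda_1)$, say $\mu_1=(q{+}\lambda_2e)/f$ and $\lambda_1=(r{+}\mu_2f)/e$, with $e,f$ expressed through $p$ and one extra positive parameter $s$, for instance $e=s$, $f=1{-}p{-}s$ with the constraint handled by a further substitution $f=\cdots$. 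This must be chosen so that the admissible region corresponds exactly to an orthant of four positive variables. In case (U) we proceed likewise with $q,r$ replaced by $-q,-r$. Plugging also the explicit $v_*$ from~\eqref{EquiM1M} into $J_*$ gives $C_0,\dots,C_4$ as rational functions in the positive variables and the remaining rates, and $\alpha_i^-{=}\beta_i^-{=}c$ is fixed to $0$ or $1$ to keep the expressions tractable.

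We then compute the Routh-Hurwitz quantities $R_1,\dots,R_5$ with Maple. For (S) we aim to show that each numerator and denominator is a polynomial with only positive coefficients in the new variables, hence positive, so that by Proposition~\ref{RHCrit}(1) all roots have negative real part and $v_*$ is stable. For (U) it suffices to exhibit one sign failure. The most likely candidate is $R_3=C_4$, proportional to $\det J_*$, which should carry a factor $qr$ with sign flipped, or else $R_5$. By Proposition~\ref{RHCrit}(2) this gives a root with positive real part, hence instability.

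The main obstacle is the third step. The coefficients are polynomials of very high degree, and positivity of $R_4$ and especially $R_5$ is not automatic. If a brute-force expansion shows mixed signs, the first remedy is to change the parametrization, e.g.\ scaling by $e$ and $f$ or using $a_1^*,b_1^*$ and $x_1^*,x_3^*$ themselves as coordinates via~\eqref{EquiM1M}, until every monomial has a positive coefficient. Only then does the sign argument become a simple inspection.
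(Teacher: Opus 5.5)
Your plan is essentially the paper's proof. You linearize at $v_*$, apply the Routh--Hurwitz criterion to the numerator of the characteristic polynomial, and encode $e{+}f{<}1$ and the two sign conditions by positive variables; the paper uses $e=u/(v{+}u(1{+}v))$, $f=uv/(v{+}u(1{+}v))$, and $\lambda_2=\mu_1f/(e(1{+}a))$, $\lambda_1=\mu_2f(1{+}b)/e$ in case (S), with the factors $1/(1{+}a)$ and $1{+}b$ exchanged between $\lambda_1$ and $\lambda_2$ in case (U). It then checks signs with Maple, finding all $R_i>0$ under (S) and exactly $R_3=C_4<0$ under (U), as you predicted. One small fix: in case (U) your additive substitution must solve for $\lambda_2=(\mu_1f{+}q)/e$ and $\mu_2=(\lambda_1e{+}r)/f$ rather than for $\mu_1$ and $\lambda_1$. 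Otherwise $\mu_1=(\lambda_2e{-}q)/f$ is not automatically positive, the parameter region is not an orthant, and the sign inspection would not be valid; the paper's multiplicative form avoids this issue.
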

\begin{proof}
The principle of the proof of stability is fairly simple: if $A_*$ is the Jacobian matrix of the dynamical system~\eqref{ODEM} at the equilibrium point $v_*$, one has to prove that all its eigenvalues have a negative real part. Proposition~\ref{RHCrit} is used for the characteristic  polynomial $P_*$ of $A_*$. The coefficients of $P_*$ are rational functions of the reaction rates and of the parameters $e$ and $f$. In the following we will make a slight abuse of notation, by denoting $P_*(x)$ as the numerator of $P_*(x)$. This clearly does not change  the location of the roots of the the characteristic  polynomial. 

The computations in this proof were performed by using the symbolic computation software Maple\texttrademark~\cite{Maple}.  See below for the details. The restriction of the possible values of $\alpha_i^-$ and $\beta_i^-$ for $i{=}1$, $2$, is mainly due to the computational complexity of the algebraic expressions for the Rough-Hurwitz coefficients. 

For example, when $\alpha_i^-{=}\beta_i^-{=}0$ for $i{=}1$, $2$, the coefficient of $x^2$ of the numerator of  $P_*(x)$ is a polynomial of ten variables, $e$, $f$, $\alpha_i$, $\beta_i$, $\lambda_i$, $\mu_i$, $i{=}1$, $2$,  of total degree 37, with more than 1450 terms. Another difficulty is of being able to use the conditions of the signs of $1{-}e{-}f$, $\mu_1f{-}\lambda_2e$ and $\lambda_1e{-}\mu_2f$ of the proposition, to derive the signs of the five Rough-Hurwitz coefficients of Proposition~\ref{RHCrit}.

The main ingredient of this proof is the introduction of a convenient parametrization of the constants $e$, $f$ and $\lambda_1$, $\lambda_2$ and a careful handling of the corresponding algebraic expressions. 

\medskip
\noindent
{\bf Parametrization scheme.}
\begin{enumerate}
\item The condition $e{+}f{<}1$ is expressed by the fact that there exist $u$ and $v{>}0$ such that,
\[
e=\frac{u}{v+u(1+v)},\quad f=\frac{uv}{v+u(1+v)}.
\]
\item The condition~(S), i.e. $\mu_1f{-}\lambda_2e >0$ and $\lambda_1e{-}\mu_2f>0$, by the existence of $a$ and $b{>}0$ such that,
\[
\lambda_2=\frac{\mu_1f}{e(1+a)},\quad 
\lambda_1=\frac{\mu_2f(1+b)}{e}.
\]
\item The condition~(U), i.e. $\mu_1f{-}\lambda_2e <0$ and $\lambda_1e{-}\mu_2f<0$, by the existence of $a$ and $b{>}0$ such that,
\[
\lambda_2=\frac{\mu_1f(1{+}b) }{e},\quad 
\lambda_1=\frac{\mu_2 f}{e(1{+}a)}.
\]
\end{enumerate}
In the above example when $\alpha_i^-{=}\beta_i^-{=}0$ for $i{=}1$, $2$, the coefficient of $x^2$ for $P_*(x)$ is this time is a polynomial of the variables $a$, $b$, $u$, $v$, $\alpha_i$, $\beta_i$, $\mu_i$, $i{=}1$, $2$, of total degree 27 with 521 terms. The gain of complexity is not really  significant but our conditions are thus encoded by the fact that $a$, $b$, $u$ and $v$ are positive.  It will be crucial to assess the signs of the Rough-Hurwitz coefficients, expressed with polynomials of these positive variables.

Due to the sizes of the algebraic expressions, the results have been gathered in the files in the ancillary section of the arXiv submission~\citet{LR26} of the paper. The main Maple\texttrademark{} code is in the file {\tt Code.maple}. See also the last section of the appendix. The  names of the files for the \LaTeX{} expressions the five Rough-Hurwitz coefficients are in the general format {\tt X-Y.tex}, with $X{\in}\{0,1\}$ and $Y{\in}\{S,U\}$,
\begin{itemize}
\item $X{\in}\{0,1\}$, for the case $\alpha_i^-{=}\beta_i^-{=}X$ for $i{=}1$, $2$;
\item $Y{\in}\{\text{S,U}\}$ refers to the condition (S) or (U).
\end{itemize}
The exact expressions of some of the $R_i$, $1{\le}i{\le}5$, have thousands terms so that the sign of them may be  difficult to assess. For example, the coefficient $R_4$ in  {\tt 0-S.tex} has 4440 terms. For this reason, the \LaTeX{} file {\tt X-Y.tex} has been processed into a file {\tt X-Y-trace.txt} by removing all the positive variables  $a$, $b$, $u$, $v$, $\alpha_i$, $\beta_i$, $\mu_i$, $i{=}1$, $2$, and their powers. The result is a skeleton composed of the symbols of the set \{+,-,(, )\}, which makes the verification of signs easier.

For  {\tt X}${\in}\{0,1\}$, it is easily checked that, under condition~(S), all Rough-Hurwitz coefficients are positive in {\tt X-S-trace.txt}. Under condition~(U),  the coefficients $R_1$, $R_2$, $R_4$ and $R_5$ are positive and $R_3$ is negative  in {\tt X-U-trace.txt}. This concludes the proof of the proposition. 
\end{proof}

Proposition~\ref{StabProp} gives readily the following stability property for $H$.
\begin{corollary}
Under the conditions of Theorem~\ref{ThM1M}, if $\mu_1f{-}\lambda_2e{>}0$ and $\lambda_1e{-}\mu_2f{>}0$, then the subset $H{=}\{{ A}, { B}\}$ is stable for the sequence of Markov processes $(V_N(t))$. 
\end{corollary}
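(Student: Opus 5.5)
The plan is to apply Proposition~\ref{StabProp} with $H{=}\{{ A},{ B}\}$ and $H^c$ the remaining seven species. Its two hypotheses are supplied by Theorem~\ref{ThM1M} and by Propositions~\ref{EquiM2M} and~\ref{StabM1M}. The only genuine work is to fix the open set $O_P$ so that both hypotheses hold simultaneously.

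\textbf{Step 1: the open set $O_P$.} Take $O_P$ to be the image of ${\cal E}_P$ under the affine map
\[
(x_1,x_3,a_1,b_1)\mapsto(x_1,1{-}e{-}f{-}x_1{-}x_3,x_3,a_1,e{-}a_1,b_1,f{-}b_1).
\]
This is an open subset, relative to the affine subspace defined by the conservation relations~\eqref{eqCons}, of the interior of $\R_+^{7}$. Openness is understood within that subspace, which is the only meaningful sense since the initial conditions always satisfy the conservation laws.

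\textbf{Step 2: hypothesis~(1) of Proposition~\ref{StabProp}.} Theorem~\ref{ThM1M} gives, for every initial point in $O_P$ and every $(u_A,u_B){\in}\N^2$, a time $t_0{>}0$ on which the scaled slow variables converge to the solution of~\eqref{ODEM}. On the same interval the occupation measure converges to $\Pois{m_A(s)}{\otimes}\Pois{m_B(s)}\diff s$, whose time marginal is Lebesgue measure.

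A priori $t_0$ depends on the initial point, through the $\eps$ used in the stopping time $\tau_N$. I will therefore restrict attention to a compact neighbourhood $K$ of $v_*$ inside ${\cal E}_P$. On $K$, the $\eps$ and $t_0$ from the proof can be chosen uniformly, since they depend only on lower bounds on the coordinates and on $1{-}e{-}f{-}x_1{-}x_3$.

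\textbf{Step 3: hypothesis~(2) of Proposition~\ref{StabProp}.} Under $e{+}f{<}1$ together with $\mu_1f{-}\lambda_2e{>}0$ and $\lambda_1e{-}\mu_2f{>}0$, Condition~\eqref{CondM} holds. Proposition~\ref{EquiM2M} then yields the equilibrium $v_*{\in}{\cal E}_P$, and Proposition~\ref{StabM1M}(S) shows that $v_*$ is stable. Being a hyperbolic sink, it is asymptotically stable, so there is a neighbourhood $U_0{\subset}K$ of $v_*$ that is forward invariant under~\eqref{ODEM}.

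\textbf{Step 4: conclusion.} Taking $O_P$ to be the image of $U_0$, the restart argument in the proof of Proposition~\ref{StabProp} applies. At each restart, the slow coordinates, rescaled by $N$, lie close to a point of $U_0$, and the uniform $t_0$ from Step~2 lets the induction proceed indefinitely. This proves that $H$ is stable.

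\textbf{Main obstacle and caveat.} The main obstacle is this uniformity of $t_0$ over a neighbourhood, since the proof of Theorem~\ref{ThM1M} is only sketched. I would justify it by noting that every coupling bound in the proof of Theorem~\ref{ThM1L} depends continuously on the initial limit point. Note also that Proposition~\ref{StabM1M} assumes $\alpha_i^-{=}\beta_i^-{=}c{\in}\{0,1\}$, so the corollary inherits that hypothesis (implicitly, as footnoted in Table~\ref{Tab1}).
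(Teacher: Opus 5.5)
Your proposal is correct and follows the paper's route. The paper's proof is the one-line application of Proposition~\ref{StabProp}, with hypothesis~(1) supplied by Theorem~\ref{ThM1M} and hypothesis~(2) by Propositions~\ref{EquiM2M} and~\ref{StabM1M}(S). Your extra care makes explicit what the paper leaves implicit: you choose $O_P$ as a forward-invariant neighbourhood of $v_*$ on which $t_0$ is uniform, and you note that the corollary inherits the restriction $\alpha_i^-=\beta_i^-=c\in\{0,1\}$ from Proposition~\ref{StabM1M}.
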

\section{Saturated Case, $e{+}f{>}1$}\label{SatSec}
In this last case, there are more enzyme than substrat. We study the possible equilibrium properties of the set of states for which there are few copies of substrat species, i.e. $H{=}\{{ S}_1,{ S}_2,{ S}_3\}$. 
\subsection{Few Substrat}
\addcontentsline{toc}{section}{ \ \thesubsection\quad Few Substrat}

\begin{theorem}\label{M2Th}
Under the condition $e{+}f{>}1$, if $H{=}\{{ S}_1,{ S}_2,{ S}_3\}$ and the initial conditions are 
\[
V_N^H(0)=v^H=(x_1,x_2,x_3)\in\N^3, \quad V_N^{H^c}(0)=(u_A^N,a_1^N,a_2^N,u_B^N,b_1^N,b_2^N)\in\N^6
\]
such that
\[
\lim_{N\to+\infty}\frac{V_N^{H^c}(0)}{N}= (e{-}{a}_1{-}a_2^0,a_1^0,a_2^0,f{-}b_1^0{-}b_2^0,b_1^0,b_2^0),
\]
with $b_2^0\steq{def}1{-}a_1^0{-}a_2^0{-}b_1^0$, and $(a_1^0,a_2^0,b_1^0){\in}{\cal E}_P$, 
\[
{\cal E}_P=\{ (a_1,a_2,b_1){\in}(0,{+}\infty)^3: a_1{+}a_2{+}b_1<1, 1{-}f{<}{a}_1{+}a_2< e\} 
\]
 then there exists $t_0{>}0$ such that, for the convergence in distribution, 
\begin{multline}\label{VM2}
\lim_{N\to+\infty}\left(\left(\frac{V_N^{H^c}(t)}{N},t{<}t_0\right),\Lambda_N^{H,t_0}\right)\\=\left(\left({u}_A(t),{a}_1(t),{a}_2(t),{u}_B(t),{b}_1(t),{b}_2(t), t{<}t_0\right),\Lambda_\infty^{t_0}\right)\\
{\steq{def}\left(\left(e{-}{a}_1(t){-}{a}_2(t),{a}_1(t),{a}_2(t),f{+}{a}_1(t){+}{a}_2(t){-}1,{b}_1(t),1{-}{a}_1(t){-}{a}_2(t){-}{b}_1(t), t{<}t_0\right),\Lambda_\infty^{t_0}\right)}
\end{multline}
where
\begin{enumerate}
\item the function $(a_1(t),a_2(t),b_1(t))$ is the solution of the ODE
\begin{equation}\label{ODEM2}
\begin{cases}
\dot{a}_1(t)&= \mu_2b_2(t)-\lambda_1 a_1(t),\\
\dot{a}_2(t)&=\alpha_2u_A(t)m_{X_2}(t) {-}(\lambda_2{+}\alpha_2^-)a_2(t),\\
\dot{b}_1(t)&=\lambda_2a_2(t)-\mu_1b_1(t),
\end{cases}
\end{equation}
with initial state $(a_1^0,a_2^0,b_1^0)$, with, for $t{\ge}0$,
\[
\begin{cases}
\displaystyle m_{X_1}(t)=\frac{\mu_2b_2(t){+}\alpha_1^-a_1(t)}{\alpha_1u_A(t)}, \qquad
m_{X_3}(t)=\frac{\lambda_2a_2(t){+}\beta_1^-b_1(t)}{\beta_1u_B(t)},\\
\displaystyle m_{X_2}(t)=\frac{\lambda_1a_1(t){+}\alpha_2^-a_2(t){+}\mu_1b_1(t){+}\beta_2^- b_2(t)}{\alpha_2 u_A(t){+}\beta_2u_B(t)}.
\end{cases}
\]
\item The measure $\Lambda_\infty$ is defined by, for $F{\in}{\cal C}_c(\R_+{\times}\N^3)$, 
\begin{multline*}
  \croc{\Lambda_\infty,F}=\int_{\R_+{\times}\N^3} F(s,x_1,x_2,x_3) \\ \Pois{m_{X_1}(s)}(\diff x_1)\Pois{m_{X_2}(s)}(\diff x_2)\Pois{m_{X_3}(s)}(\diff x_3)\diff s,
\end{multline*}
 almost surely,
\end{enumerate}
\end{theorem}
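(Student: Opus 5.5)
We follow the scheme of the proof of Theorem~\ref{ThM1L}, which is simpler here because all $O(1)$ coordinates $(X_1^N,X_2^N,X_3^N)$ are fast and there are no slow $O(1)$ variables. Let $\eps{>}0$ and
\[
\tau_N\steq{def}\inf\{t{>}0: \min(U_A^N(t),A_1^N(t),A_2^N(t),U_B^N(t),B_1^N(t),B_2^N(t))<\eps N\}.
\]
The set ${\cal E}_P$ guarantees that $u_A^0{=}e{-}a_1^0{-}a_2^0$, $u_B^0{=}f{+}a_1^0{+}a_2^0{-}1$ and $b_2^0$ are positive, so $\eps$ can be chosen small enough that $\tau_N{>}0$ with a margin.

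\textbf{Step 1: couplings.} On $[0,\tau_N)$, every substrat species is consumed at rate at least $\eps N$ times its number of copies: $S_1$ with rate $\alpha_1U_A^N\ge\alpha_1\eps N$, $S_3$ with $\beta_1U_B^N$, and $S_2$ with $\alpha_2U_A^N{+}\beta_2U_B^N$. Its input rate is at most $\delta_0 N$, where $\delta_0$ is the maximum of the twelve rate constants; this uses $A_i^N,B_i^N\le N$ since the total substrat is $N$. Proceeding as in Section~\ref{CoupUASec}, with the same Poisson processes ${\cal P}_\kappa$ plus independent auxiliary ones, we build three processes $(Q_j(Nt))$, $j{=}1,2,3$, where each $(Q_j(t))$ is an $M/M/\infty$ queue with input rate $c\delta_0$ and service rate $c'\eps$, such that $X_j^N(t)\le Q_j(Nt)$ for $t{<}\tau_N$. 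Consequences:
\begin{itemize}
\item[(i)] $\sup_N\sup_{t\le T}\E((X_1^N{+}X_2^N{+}X_3^N)(t)\ind{t<\tau_N})<{+}\infty$, by ergodicity of the queues.
\item[(ii)] By Proposition~\ref{MMIHit}, $X_j^N(t)/N\to0$ uniformly on $[0,t_0]$.
\item[(iii)] Each complex coordinate $A_i^N,B_i^N$ dominates an $M/M/\infty$ queue with no input, so Lemma~\ref{MMILower} keeps it above $\eps N$ on $[0,t_0]$.
\end{itemize}
The free enzymes $U_A^N{=}E_N{-}A_1^N{-}A_2^N$ and $U_B^N{=}F_N{-}B_1^N{-}B_2^N$ need another argument. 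By (ii) and conservation, $A_1^N{+}A_2^N{+}B_1^N{+}B_2^N=N{-}o(N)$. Moreover $A_1^N{+}A_2^N$ changes by $\pm1$ at rate $O(N)$ (the fluxes $\mu_2B_2$, $\lambda_2A_2$, etc.). Hence on a short interval these sums stay within a small multiple of $N$ of their initial values, and the strict inequalities $1{-}f<a_1{+}a_2<e$ keep $U_A^N,U_B^N$ above $\eps N$. Together, these give $\P(\tau_N\ge t_0)\to1$ for a suitable $t_0{>}0$.

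\textbf{Step 2: tightness.} Tightness of $(\Lambda_N^{H,t_0})$ follows from (i) and Lemmas~1.3--1.4 of~\citet{Kurtz1992}, giving a representation $\gamma_s(\diff x)\diff s$ of any limit. For the slow variables, write the SDEs~\eqref{SDE} for $\overline{A}_1^N,\overline{A}_2^N,\overline{B}_1^N$. For instance,
\[
\overline{A}_2^N(t)=\overline{A}_2^N(0)+M_N(t)+\alpha_2\int_0^t \overline{U}_A^N(s)X_2^N(s)\diff s-(\lambda_2{+}\alpha_2^-)\int_0^t\overline{A}_2^N(s)\diff s.
\]
The martingales vanish by Doob's inequality together with (i), and the modulus-of-continuity criterion gives continuous limit points. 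Note that $\overline{A}_1^N$ involves $\alpha_1\overline{U}_A^NX_1^N$, which must be averaged as well. We write $(a_1,a_2,b_1)$ for the limit point, $b_2{=}1{-}a_1{-}a_2{-}b_1$, and $u_A,u_B$ as in~\eqref{VM2}, all of which follow from conservation and (ii).

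\textbf{Step 3: averaging and identification.} This is the main step. For $f$ with finite support on $\N^3$, apply It\^o's formula to $f(X_1^N,X_2^N,X_3^N)(t)$ and divide by $N$. In the limit, $\gamma_s$ is characterized as invariant for the generator with these rates:
\begin{itemize}
\item $x_1$ increases at rate $\mu_2b_2{+}\alpha_1^-a_1$ and decreases at rate $\alpha_1u_Ax_1$;
\item $x_3$ increases at rate $\lambda_2a_2{+}\beta_1^-b_1$ and decreases at rate $\beta_1u_Bx_3$;
\item $x_2$ increases at rate $\lambda_1a_1{+}\alpha_2^-a_2{+}\mu_1b_1{+}\beta_2^-b_2$ and decreases at rate $(\alpha_2u_A{+}\beta_2u_B)x_2$.
\end{itemize}
These are three independent $M/M/\infty$ queues, so $\gamma_s$ is the product of Poisson distributions with parameters $m_{X_j}(s)$, which is deterministic given the slow path. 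Substituting $\E[X_2]=m_{X_2}$ and $\E[X_1]=m_{X_1}$ into the slow equations gives $\dot a_1=\alpha_1u_Am_{X_1}-(\lambda_1{+}\alpha_1^-)a_1=\mu_2b_2-\lambda_1a_1$, and likewise the rest of~\eqref{ODEM2}. The right-hand side of~\eqref{ODEM2} is locally Lipschitz on $\{u_A,u_B{>}0\}$, so the ODE has a unique solution, which identifies the limit and yields convergence of the full sequence.

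\textbf{Main obstacle.} We expect the hardest step to be Step~1, because the drift of $U_A^N,U_B^N$ is not sign-controlled and cannot be handled by a single monotone coupling. We would handle it by a direct $O(1)$-Lipschitz bound on the rates of the sums of complexes. The rates involving the products $U_A^NX_j^N$ are bounded through the couplings of Step~1 and the ergodic theorem, as in Section~\ref{CoupO1Sec}.
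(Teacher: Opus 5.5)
Your proposal is correct and follows essentially the paper's route. The paper skips the proof, saying only that it is analogous to (and simpler than) that of Theorem~\ref{ThM1L}: a stopping time $\tau_N$, $M/M/\infty$ couplings of the $O(1)$ coordinates, tightness via Kurtz's lemmas, and averaging in which the fast process is three independent $M/M/\infty$ queues, which yields the product of Poisson distributions. Your control of $U_A^N,U_B^N$ by counting the $O(N)$ jumps of the complex sums is the same device as the lower bound~\eqref{EqCoupUB} on $(U_B^N(t))$ in that proof; note only that $\mu_2B_2$ is not one of the fluxes changing $A_1^N{+}A_2^N$.
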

\begin{proof}
The proof is analogous, even simpler, to the proof of Theorem~\ref{ThM1L}. It is therefore skipped. 
It should be noted that the $O(1)$ processes are also all ``fast''  in this case. Their transitions rates are, at time $t{=}0$,
\[
v^H\longrightarrow v^H{+}
\begin{cases}
\scriptstyle{e_{X_1}}, & \scriptstyle{\alpha_1^-a_1^N+\mu_2b_2^N},\\
\scriptstyle{{-}e_{X_1}}, &\scriptstyle{\alpha_1u_A^Nx_1},\\
\scriptstyle{e_{X_2}}, &\scriptstyle{\mu_1b_1^N{+}\alpha_2^-a_2^N{+}\beta_2^-b_2^N{+}\lambda_1a_1^N},
    \end{cases}\qquad 
    \begin{cases}
      \scriptstyle{{-}e_{X_2}}, &\scriptstyle{\left(\alpha_2u_A^N{+}\beta_2u_B^N\right)x_2},\\
\scriptstyle{e_{X_3}}, &\scriptstyle{\lambda_2a_2^N{+}\beta_1^-b_1^N},\\
\scriptstyle{{-}e_{X_3}}, & \scriptstyle{\beta_1u_B^Nx_3}.
\end{cases}
\]
\end{proof}

The following lemma gives a characterization of the potential equilibrium points of the dynamical system of Theorem~\eqref{M2Th}. 
\begin{lemma}
If $(a_1^*,a_2^*,b_1^*,b_2^*){\in}(0,{+}\infty)^4$ is an equilibrium point of the  dynamical system  $(a_1(t),a_2(t),b_1(t),b_2(t))$ of Relation~\eqref{VM2}, then
\begin{equation}\label{abb}
a_2^*=\frac{\mu_1  \left(\mu_2 -(\lambda_1  + \mu_2)a_1   \right)}{\mu_2  \left(\lambda_2  +\mu_1  \right)},
\quad b^*_1=\frac{\lambda_2  \left(\mu_2-(\lambda_1 +\mu_2)a_1 \right)}{\mu_2  \left(\lambda_2  +\mu_1  \right)},\quad
b_2^*=\frac{\lambda_1}{\mu_2} a_1^*,
\end{equation}
where $a_1^*$ is a  root of the polynomial $P(x)$, for $x{\in}\R$, 
\begin{multline}\label{PolyP}
P(x)\steq{def}  (\lambda_1\mu_1{-}\lambda_2\mu_2)(\alpha_2\lambda_1\mu_2 (\lambda_2{+}\mu_1){-}\beta_2\lambda_2\mu_1(\lambda_1{+}\mu_2))x^2\\
{+}\mu_2\left[\rule{0mm}{5mm}\alpha_2\lambda_1\mu_2(\lambda_2{+}\mu_1)((\lambda_2{+}\mu_1)e{-}\mu_1){+}\beta_2\lambda_2\mu_1\left((\lambda_1{+}\mu_2)((\lambda_2{+}\mu_1)f{-}\lambda_2){+}\lambda_1\mu_1{-}\lambda_2\mu_2\right)\rule{0mm}{5mm}\right]x\\
  +\beta_2  \lambda_2  \mu_1 \mu_2^{2} (\lambda_2{-}(\lambda_2{+}\mu_1)f),
\end{multline}
furthermore
\begin{equation}\label{sumC}
a_1^*+a_2^*=\frac{\mu_1\mu_2+a_1^*(\lambda_2\mu_2-\lambda_1\mu_1)}{\mu_2 (\lambda_2  +\mu_1)},\quad
b_1^*+b_2^*=\frac{\lambda_2  \mu_2+a_1^*(\lambda_1\mu_1-\lambda_2\mu_2)}{\mu_2  \left(\lambda_2  +\mu_1  \right)}.
\end{equation}
\end{lemma}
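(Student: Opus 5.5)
The plan is to solve the stationarity equations of the dynamical system of Theorem~\ref{M2Th} by successive elimination. I use the relations of Relation~\eqref{VM2}: $u_A{=}e{-}a_1{-}a_2$, $u_B{=}f{+}a_1{+}a_2{-}1$ and $b_2{=}1{-}a_1{-}a_2{-}b_1$. The last one is the conservation $a_1{+}a_2{+}b_1{+}b_2{=}1$, which holds because the substrat coordinates are $O(1)$.

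At an equilibrium $(a_1^*,a_2^*,b_1^*,b_2^*)$, the first and third equations of~\eqref{ODEM2} are linear. They give $\mu_2 b_2^*{=}\lambda_1 a_1^*$ and $\lambda_2 a_2^*{=}\mu_1 b_1^*$, so $b_2^*{=}\lambda_1a_1^*/\mu_2$ and $b_1^*{=}\lambda_2 a_2^*/\mu_1$. Substituting into the conservation relation gives
\[
a_1^*\Bigl(1{+}\tfrac{\lambda_1}{\mu_2}\Bigr)+a_2^*\Bigl(1{+}\tfrac{\lambda_2}{\mu_1}\Bigr)=1 .
\]
Solving this for $a_2^*$, and then $b_1^*$, yields the two first expressions of~\eqref{abb}. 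Adding them to $a_1^*$ and to $b_2^*$ respectively gives the two identities~\eqref{sumC} after reducing to the common denominator $\mu_2(\lambda_2{+}\mu_1)$. These are routine computations.

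It remains to exploit the second equation of~\eqref{ODEM2}, which involves $m_{X_2}$. Multiplying by the denominator $\alpha_2u_A{+}\beta_2u_B$ gives
\[
\alpha_2u_A\bigl(\lambda_1a_1{+}\alpha_2^-a_2{+}\mu_1b_1{+}\beta_2^-b_2\bigr)=(\lambda_2{+}\alpha_2^-)a_2(\alpha_2u_A{+}\beta_2u_B).
\]
Using $\mu_1b_1{=}\lambda_2a_2$, the terms $\alpha_2\alpha_2^-u_Aa_2$ and $\alpha_2\lambda_2u_Aa_2$ cancel on both sides. The key relation I aim for is the flux balance at ${S}_2$,
\[
\alpha_2\lambda_1 a_1^* u_A^*=\beta_2\lambda_2 a_2^* u_B^*.
\]
This expresses that, at equilibrium, the mass flowing through ${AS}_2$ and through ${BS}_2$ is split in proportion to $\alpha_2u_A$ and $\beta_2u_B$. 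The reverse-rate contributions in $\alpha_2^-$ and $\beta_2^-$ should drop out once the stationary identities $\lambda_1a_1{=}\mu_2b_2$ and $\lambda_2a_2{=}\mu_1b_1$ are inserted into the equilibrium of the fast ${S}_2$ dynamics, since the resulting $P$ contains neither $\alpha_2^-$ nor $\beta_2^-$.

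I expect this cancellation to be the main obstacle. With the equation exactly as written, a residual term $\alpha_2u_A\beta_2^-b_2{-}\alpha_2^-\beta_2u_Ba_2$ appears. I would first check directly against the fast transitions listed in the proof of Theorem~\ref{M2Th} that the stationary balance for $x_2$ gives the relation above. Concretely, the equilibrium of $(a_2,b_2)$ in the full system requires $\alpha_2u_Am_{X_2}{-}\alpha_2^-a_2{=}\lambda_2a_2$ and $\beta_2u_Bm_{X_2}{-}\beta_2^-b_2{=}\mu_2b_2{=}\lambda_1a_1$. Taking the ratio of the two equations and eliminating $m_{X_2}$ should give the relation, possibly after adjusting for the reverse rates. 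If the reverse rates genuinely survive, the same elimination still works, only with modified coefficients.

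Finally, I substitute the expressions~\eqref{abb} for $a_2^*$, together with $u_A^*{=}e{-}(a_1^*{+}a_2^*)$ and $u_B^*{=}f{-}1{+}(a_1^*{+}a_2^*)$, using~\eqref{sumC}. The flux-balance relation then becomes a quadratic equation in $a_1^*$. Multiplying by $\mu_2^2(\lambda_2{+}\mu_1)^2$ and collecting powers of $x$ should give $P(x)$ of~\eqref{PolyP}. As a consistency check, at $x{=}0$ the relation reduces to $-\beta_2\lambda_2a_2u_B$ with $a_2{=}\mu_1/(\lambda_2{+}\mu_1)$ and $u_B{=}f{-}\lambda_2/(\lambda_2{+}\mu_1)$. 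After multiplication this gives the constant term $\beta_2\lambda_2\mu_1\mu_2^2(\lambda_2{-}(\lambda_2{+}\mu_1)f)$ of~\eqref{PolyP}. The quadratic coefficient comes from the product of the two $x$-linear factors $(\lambda_2\mu_2{-}\lambda_1\mu_1)x$ arising in $u_A$ and $u_B$ via~\eqref{sumC}.
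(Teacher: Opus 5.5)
\textbf{There is a genuine gap, at exactly the step you flagged.} The paper states this lemma without proof, and your elimination is the natural route. Your derivation of \eqref{abb} and \eqref{sumC} from $\dot a_1=\dot b_1=0$ and $a_1+a_2+b_1+b_2=1$ is correct, and so are your checks on the constant term.

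The flux balance $\alpha_2\lambda_1a_1^*u_A^*=\beta_2\lambda_2a_2^*u_B^*$ is asserted, not proved. Inferring it from the fact that $P$ contains no $\alpha_2^-,\beta_2^-$ is circular. It is also false in general when $\alpha_2^-$ or $\beta_2^-$ is positive. Your own proposed check shows this. Divide $\alpha_2u_A^*m_{X_2}^*=(\lambda_2+\alpha_2^-)a_2^*$ by $\beta_2u_B^*m_{X_2}^*=(\mu_2+\beta_2^-)b_2^*$ and use $\mu_2b_2^*=\lambda_1a_1^*$. This gives
\[
\alpha_2\Bigl(1+\frac{\beta_2^-}{\mu_2}\Bigr)\lambda_1a_1^*u_A^*=\beta_2\Bigl(1+\frac{\alpha_2^-}{\lambda_2}\Bigr)\lambda_2a_2^*u_B^*,
\]
which is your equation with the residual $\alpha_2\beta_2^-b_2u_A-\alpha_2^-\beta_2a_2u_B$ regrouped; nothing cancels.

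Since $P$ is linear and homogeneous in $(\alpha_2,\beta_2)$, its roots depend only on $\alpha_2/\beta_2$. The reverse rates multiply this ratio by $\lambda_2(\mu_2+\beta_2^-)/(\mu_2(\lambda_2+\alpha_2^-))$. For a concrete instance, take $\lambda_i=\mu_i=1$, $e=f=3/4$, $\alpha_2=\beta_2=\alpha_2^-=1$ and $\beta_2^-=0$. Then $a_1^*+a_2^*=1/2$ and $u_A^*=u_B^*=1/4$. The relation above forces $a_1^*=2a_2^*$, so the unique positive equilibrium is $(1/3,1/6,1/6,1/3)$. One checks directly that $\dot a_2=0$ there. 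However, $P(x)=2x-1/2$ vanishes only at $1/4$.

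\textbf{What this means for your proof.} No argument can close your step in full generality. With $P$ free of $\alpha_2^-$ and $\beta_2^-$, the conclusion is guaranteed when $\alpha_2^-\mu_2=\beta_2^-\lambda_2$, in particular when $\alpha_2^-=\beta_2^-=0$. Otherwise it fails in general, as the example shows, so the lemma as displayed needs that extra hypothesis. Under it, your cancellation already is the flux balance. Your final substitution then reproduces \eqref{PolyP} coefficient by coefficient; I checked the $x^2$, $x$ and constant terms.

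In general, your elimination proves that $a_1^*$ is a root of $P$ with $\alpha_2$ replaced by $\alpha_2(\mu_2+\beta_2^-)/\mu_2$ and $\beta_2$ replaced by $\beta_2(\lambda_2+\alpha_2^-)/\lambda_2$. Relations \eqref{abb} and \eqref{sumC} are unchanged. You should state this explicitly instead of ``possibly after adjusting for the reverse rates''.

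\textbf{A small correction.} $u_A^*$ and $u_B^*$ are never multiplied together. The $x^2$ coefficient has two sources:
\begin{itemize}
\item $a_1^*=x$ times the linear part $(\lambda_1\mu_1-\lambda_2\mu_2)x/(\mu_2(\lambda_2+\mu_1))$ of $u_A^*$;
\item the linear part of $a_2^*$ times the linear part of $u_B^*$.
\end{itemize}
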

The identification of equilibrium states is therefore reduced to the analysis of the roots of  a polynomial of degree~$2$. This looks quite simple, but as in Section~\ref{M1MSec},  the dependence of the coefficients of this polynomial on the parameters of the CRN, $e$, $f$, $\lambda_i$, $\mu_i$, $\alpha_i$, $\beta_i$, $i{=}1$, $2$, complicates significantly the investigation. This is discussed in the following proposition. 

\begin{proposition}[Equilibrium of the dynamical system of Relation~\eqref{VM2}]\label{M2ExFP}\ \\
Under the condition $e{+}f>1$,
\begin{enumerate}
\item if $\lambda_1\mu_1{-}\lambda_2\mu_2>0$.

\medskip
\noindent  If $e{\not=}\mu_2/(\lambda_1{+}\mu_2)$ and $f{\not=}\lambda_2/(\lambda_2{+}\mu_1)$,
there exists an equilibrium point if and only if the conditions
\begin{equation}\label{Cond21}
e > \frac{\mu_2}{\lambda_1{+}\mu_2} \text{ and }   f >\frac{\lambda_2}{\lambda_2{+}\mu_1},
\end{equation}
hold and in this case, it is unique. 
\item If $\lambda_1\mu_1{-}\lambda_2\mu_2<0$.\medskip
  \begin{enumerate}
  \item If the conditions
\begin{equation}\label{Cond2}
e > \frac{\mu_1}{\lambda_2{+}\mu_1} \text{ and }   f >\frac{\lambda_1}{\lambda_1{+}\mu_2},
\end{equation}
and 
\begin{equation}\label{Cond3}
\left(e{-}\frac{\mu_2}{\lambda_1{+}\mu_2}\right)\left(f {-}\frac{\lambda_2}{\lambda_2{+}\mu_1}\right) > 0,
\end{equation}
hold, then there exists a unique equilibrium point.
\item If Condition~\eqref{Cond2}
and relation
\begin{equation}\label{Cond32}
\left(e{-}\frac{\mu_2}{\lambda_1{+}\mu_2}\right)\left(f {-}\frac{\lambda_2}{\lambda_2{+}\mu_1}\right) < 0,
\end{equation}
hold, then there are  either $0$ or $2$ equilibrium points: There exists a subset $S$, resp. $S_0$, of $\R_+^2$ with non-empty interior such that if $(\alpha_2,\beta_2){\in}S$, resp. $(\alpha_2,\beta_2){\in}S_0$, then there are two equilibrium points, resp. there does not exist an equilibrium point. 
\end{enumerate}
\end{enumerate}
\end{proposition}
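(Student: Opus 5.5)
The plan is to reduce everything to the quadratic $P$ of Relation~\eqref{PolyP} and the admissible interval for $a_1^*$. By the preceding lemma, an equilibrium corresponds exactly to a root $a_1^*$ of $P$ for which the associated $(a_1^*,a_2^*,b_1^*,b_2^*)$ of Relation~\eqref{abb} lies in the state space. That is, all four coordinates are positive, and $u_A^*=e-a_1^*-a_2^*>0$ and $u_B^*=f-b_1^*-b_2^*>0$. Positivity of $a_2^*,b_1^*$ means $a_1^*<\mu_2/(\lambda_1+\mu_2)$, and positivity of $b_2^*$ means $a_1^*>0$. With Relation~\eqref{sumC}, $u_A^*>0$ and $u_B^*>0$ become two linear inequalities in $a_1^*$ whose direction depends on the sign of $\Delta\steq{def}\lambda_1\mu_1-\lambda_2\mu_2$. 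Explicitly, $u_A^*>0$ reads $a_1^*(\lambda_1\mu_1-\lambda_2\mu_2)>\mu_1\mu_2-e\mu_2(\lambda_2+\mu_1)$, and $u_B^*>0$ reads $a_1^*\Delta<f\mu_2(\lambda_2+\mu_1)-\lambda_2\mu_2$. So the equilibria are the roots of $P$ in an explicit interval $I=(\ell,r)$, intersected with $(0,\mu_2/(\lambda_1+\mu_2))$.

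First I evaluate $P$ at the endpoints of $I$. The natural guess is that $P$ factorizes nicely there, namely that $P$ at the boundary where $u_A^*=0$ is a multiple of $\beta_2$ times a signed quantity, and at $u_B^*=0$ a multiple of $\alpha_2$. This should hold because $P$ comes from eliminating $x_2$ in the $a_2$ equation, which is linear in $\alpha_2u_A$ and $\beta_2 u_B$. I expect the signs to be $P(\ell)$ and $P(r)$ of opposite sign, or of the same sign, according to the conditions stated. I also check $P(0)=\beta_2\lambda_2\mu_1\mu_2^2(\lambda_2-(\lambda_2+\mu_1)f)$, whose sign is that of $\lambda_2/(\lambda_2+\mu_1)-f$, and $P(\mu_2/(\lambda_1+\mu_2))$, whose sign should involve $e-\mu_2/(\lambda_1+\mu_2)$. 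These explain where the thresholds in \eqref{Cond21} and \eqref{Cond3} come from.

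Case $\Delta>0$. The $u_A,u_B$ constraints give $a_1^*\in\big((\mu_1-e(\lambda_2+\mu_1))\mu_2/\Delta,\ (f(\lambda_2+\mu_1)-\lambda_2)\mu_2/\Delta\big)$. Together with $(0,\mu_2/(\lambda_1+\mu_2))$ and $e+f>1$, I would check that the effective interval is $(0,\mu_2/(\lambda_1+\mu_2))$ or is empty exactly when \eqref{Cond21} fails. By the intermediate value theorem, $P$ changes sign on the interval iff $P(0)$ and $P(\mu_2/(\lambda_1+\mu_2))$ have opposite signs, which is \eqref{Cond21}. For uniqueness, $P$ is quadratic, so an interval with endpoint values of opposite sign contains exactly one root. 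If both endpoint values have the same sign I must exclude two roots inside. I would do this with the sign of the leading coefficient $\Delta(\alpha_2\lambda_1\mu_2(\lambda_2+\mu_1)-\beta_2\lambda_2\mu_1(\lambda_1+\mu_2))$, or with the vertex position, via case analysis on the sign of the bracket. This is one delicate point.

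Case $\Delta<0$. The inequalities flip, and \eqref{Cond2} is what makes the admissible interval nonempty and bounded by the constraints $u_A^*>0$ and $u_B^*>0$ rather than by $0$ and $\mu_2/(\lambda_1+\mu_2)$. Under \eqref{Cond3} the endpoint values of $P$ have opposite signs, giving a unique root by the same quadratic argument. Under \eqref{Cond32} they have the same sign, so there are $0$ or $2$ roots. Existence of both situations on open sets of $(\alpha_2,\beta_2)$ follows by varying the ratio $\alpha_2/\beta_2$. $P$ is linear in $(\alpha_2,\beta_2)$, $P=\alpha_2P_\alpha+\beta_2P_\beta$, so I would exhibit one ratio where the minimum of $P$ in the interval has the opposite sign to the endpoints (two roots), and one where $P$ keeps a sign (no root, e.g. a ratio where $P$ is dominated by a sign-definite term). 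Both are open conditions, giving $S$ and $S_0$ with nonempty interior. The main obstacle is the bookkeeping of signs of $P$ at the endpoints and ruling out double roots in the unique cases; I would use the factorized endpoint values and symbolic checks as in Proposition~\ref{StabM1M}.
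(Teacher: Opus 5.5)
Your framework (roots of $P$ in the set cut out by $0<a_1^*<C_0$, $u_A^*>0$, $u_B^*>0$, signs of $P$ on the boundary, root counting for a quadratic) is the paper's. However, the step where you identify that set is wrong, and the sign argument built on it does not prove the claims.

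Let
\[
C_0=\frac{\mu_2}{\lambda_1+\mu_2},\qquad C_1=\frac{\mu_2(\mu_1-(\lambda_2+\mu_1)e)}{\Delta}\ \ (u_A^*=0),\qquad C_2=\frac{\mu_2((\lambda_2+\mu_1)f-\lambda_2)}{\Delta}\ \ (u_B^*=0).
\]

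\textbf{Case $\Delta>0$.} The admissible interval is $(\max(0,C_1),\min(C_0,C_2))$. Under \eqref{Cond21} it is generally strictly smaller than $(0,C_0)$. Indeed $C_1>0$ iff $e<\mu_1/(\lambda_2+\mu_1)$, and $C_2<C_0$ iff $f<\lambda_1/(\lambda_1+\mu_2)$. Both are compatible with \eqref{Cond21}, because $\Delta>0$ gives $\mu_2/(\lambda_1+\mu_2)<\mu_1/(\lambda_2+\mu_1)$ and $\lambda_2/(\lambda_2+\mu_1)<\lambda_1/(\lambda_1+\mu_2)$. For example, $\lambda_1=\mu_1=2$, $\lambda_2=\mu_2=1$, $e=1/2$, $f=3/5$ give the interval $(1/6,4/15)\subsetneq(0,1/3)$. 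So $P(0)<0<P(C_0)$ only yields a root in $(0,C_0)$. Nothing in your argument shows that this root satisfies $u_A^*>0$ and $u_B^*>0$.

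\textbf{Case $\Delta<0$.} The interval is $(\max(0,C_2),\min(C_0,C_1))$. Contrary to your claim, \eqref{Cond2} does not force it to be $(C_2,C_1)$. Depending on the signs of $e-\mu_2/(\lambda_1+\mu_2)$ and $f-\lambda_2/(\lambda_2+\mu_1)$, which are exactly the factors in \eqref{Cond3} and \eqref{Cond32}, it is $(0,C_0)$, $(C_2,C_1)$, $(C_2,C_0)$ or $(0,C_1)$.

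What is missing is what the paper does with \eqref{Ineq} and \eqref{RelP}: order $0,C_0,C_1,C_2$ in each sub-case, then use the signs of $P$ at all four points. You only say you ``expect'' these signs and never compute them. Also, the ``only if'' in (1) is just emptiness of the interval, since $C_2>0$ and $C_1<C_0$ are necessary; the ``delicate point'' of excluding two roots there does not arise.

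Your guess about the boundary values is in fact exact, and it repairs everything. From \eqref{PolyP} one checks that
\[
P(x)=\Delta\Bigl[\alpha_2\lambda_1\mu_2(\lambda_2+\mu_1)\,x\,(x-C_1)-\beta_2\lambda_2\mu_1(\lambda_1+\mu_2)\,(x-C_2)(x-C_0)\Bigr].
\]
For $\Delta>0$, at the left end of $(\max(0,C_1),\min(C_0,C_2))$ the first product vanishes and the second is positive, so $P<0$. At the right end the opposite happens, so $P>0$. This gives exactly one root, whichever constraints bind.

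For $\Delta<0$, $P$ is negative at the left end if that end is $0$, and positive if it is $C_2$. At the right end $P$ is positive if the end is $C_0$, and negative if it is $C_1$. This gives (2)(a) and the ``$0$ or $2$'' dichotomy of (2)(b).

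Your ratio argument for $S$ and $S_0$ (the paper's limits $(\alpha_2,\beta_2)\to(1,0),(0,1)$ together with the vertex $W_0$) then becomes immediate. The two limiting quadratics have roots $\{0,C_1\}$ and $\{C_2,C_0\}$. On the closed admissible interval, one of them is sign-definite, so there is no root for ratios near that extreme. The other vanishes at both ends and, inside, has the sign opposite to the endpoint values of $P$, so there are two roots for ratios near the other extreme. Both conditions are open.
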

\begin{proof}
With Relation~\eqref{abb}, a possible equilibrium point $(a_1^*,a_2^*,b_1^*,b_2^*)$ should satisfy the relations
\begin{equation}\label{feq1}
0{<} a_1^*{<} C_0\steq{def}\frac{\mu_2}{\lambda_1  + \mu_2}, \quad a_1^*{+}a_2^*{<}e \text{ and } b_1^*{+}b_2^*{<}f,
\end{equation}
and therefore, with Relations~\eqref{sumC}, the inequalities
\begin{equation}\label{feq12}
\frac{\mu_1}{\lambda_2  + \mu_1}{-}e < a_1^*\frac{\lambda_1\mu_1{-}\lambda_2\mu_2}{\mu_2(\lambda_2  + \mu_1)}
< f{-} \frac{\lambda_2}{\lambda_2  +\mu_1}.
\end{equation}
As it can be seen this implies that the relation $e{+}f{>}1$ holds. 

We define
\[
C_1\steq{def}\mu_2\frac{\mu_1{-}(\lambda_2{+}\mu_1)e}{\lambda_1\mu_1{-}\lambda_2\mu_2} \text{ and } C_2\steq{def}\mu_2\frac{(\lambda_2{+}\mu_1)f{-} \lambda_2}{\lambda_1\mu_1{-}\lambda_2\mu_2},
\]
note that
\begin{equation}\label{Ineq}
\begin{cases}
&C_2{-}C_1{=}\displaystyle\frac{\mu_2(\lambda_2{+}\mu_1)(e{+}f{-}1)}{\lambda_1\mu_1{-}\lambda_2\mu_2}, \quad C_2{-}C_0{=}\frac{\mu_2  \left(\lambda_2  +\mu_1  \right) \left((\lambda_1{+}\mu_2)f{-} \lambda_1 \right)}{\left(\mu_2  +\lambda_1  \right) \left(\lambda_1  \mu_1{-}\lambda_2  \mu_2  \right)},\\
\ \\
 &C_0{-}C_1{=}\displaystyle\frac{\mu_2(\lambda_2{+}\mu_1)((\lambda_1{+}\mu_2)e{-}\mu_2)}{(\lambda_1\mu_1{-}\lambda_2\mu_2)(\mu_2{+}\lambda_1)}.
\end{cases}
\end{equation}
With Relation~\eqref{PolyP} (and trite calculations), we obtain
\begin{equation}\label{RelP}
\begin{cases}
&P(0)\displaystyle=\beta_2  \lambda_2  \mu_1 \mu_2^{2} (\lambda_2{-}(\lambda_2{+}\mu_1)f )\smallskip\\
&P\left(C_0\right)\displaystyle=\mu_2^{3} \alpha_2  \lambda_1  \left(\lambda_2 {+}\mu_1  \right)^{2} \left((\lambda_1{+}\mu_2)e  {-}\mu_2  \right)\smallskip\\
&P(C_1)\displaystyle=\mu_1 \mu_2^{2} \beta_2  \lambda_2  \left(\lambda_2{+}\mu_1  \right)^{2} \left(\lambda_1  \mu_1  {-}\lambda_2  \mu_2  \right) \left(\mu_2{-}(\lambda_1{+} \mu_2)e\right) \left(e{+}f{-}1\right)\smallskip\\
&P(C_2)\displaystyle=\mu_2^{3} \alpha_2  \lambda_1\left(\lambda_1  \mu_1  {-}\lambda_2  \mu_2\right)  \left(\lambda_2 {+}\mu_1  \right)^{2} \left((\lambda_2{+}\mu_1)f  {-}\lambda_2  \right) \left(e{+}f{-}1\right).
\end{cases}
\end{equation}

If $\lambda_1\mu_1-\lambda_2\mu_2>0$ holds, to have an equilibrium $(a_1^*,a_2^*,b_1^*,b_2^*)$ for the dynamical system, the root $a_1^*$ of the polynomial $P$ must be  such that  $a_1^*{\in}(C_1{\vee}0,C_2{\wedge}C_0)$ because of Relation~\eqref{feq12}.  In particular, we must have the relations $C_2{>}0$ and $C_1{<}C_0$, which imply, by using Inequalities~\eqref{Ineq}, that the relations
\[
f> \frac{\lambda_2}{\lambda_2{+}\mu_1} \text{ and } e >\frac{\mu_2}{\lambda_1{+}\mu_2}
\]
hold. With Relations~\eqref{RelP}, we obtain the inequalities  $P(C_1){<}0$  and $P(C_0){>}0$ therefore there is a unique equilibrium point $(a_1^*,a_2^*,b_1^*,b_2^*)$ in $\R_+^4$ with $a_1^*{\in}(C_1^+,C_0{\wedge}C_2)$. The first case is proved.

For the second part of the proposition, when $\lambda_1\mu_1{-}\lambda_2\mu_2{<}0$, we must have $a_1^*{\in}(C_2^+,C_1{\wedge}C_0)$ and therefore $C_1{>}0$ and $C_2{<}C_0$, and hence the conditions
\[
e>\frac{\mu_1}{\lambda_2{+}\mu_1} \text{ and }f>\frac{\lambda_1}{\lambda_1{+}\mu_2}.
\]

From now on, assume that Condition~\eqref{Cond2} holds.

If
\[
e> \frac{\mu_2}{\lambda_1{+}\mu_2} \text{ and } f> \frac{\lambda_2}{\lambda_2{+}\mu_1},
\]
then  $C_2{<}0{<}C_0{<}C_1$ and by using again Relations~\eqref{RelP}, we have $P(0){<}0$ and $P(C_0){>}0$. Hence, there is a unique equilibrium point $(a_1^*,a_2^*,b_1^*,b_2^*)$ in $\R_+^4$ with $a_1^*{\in}(0,C_1)$. 

Otherwise if
\[
e< \frac{\mu_2}{\lambda_1{+}\mu_2} \text{ and } f< \frac{\lambda_2}{\lambda_2{+}\mu_1},
\]
then,  $0{<}C_2{<}C_1{<}C_0$ and $P(C_2){>}0$ and $P(C_1){<}0$. Hence, there is a unique equilibrium point $(a_1^*,a_2^*,b_1^*,b_2^*)$ in $\R_+^4$ with $a_1^*{\in}(C_2,C_1)$. 

For the proof of the last part of the proposition, we introduce $W_0$ the location of the extremum of $P$,
 \[
 W_0=-\frac{\mu_2\left[\rule{0mm}{5mm}\alpha_2\lambda_1\mu_2(\lambda_2{+}\mu_1)((\lambda_2{+}\mu_1)e{-}\mu_1){+}\beta_2\lambda_2\mu_1\left(\rule{0mm}{4mm}(\lambda_1{+}\mu_2)((\lambda_2{+}\mu_1)f{-}\lambda_2){+}\lambda_1\mu_1{-}\lambda_2\mu_2\right)\rule{0mm}{5mm}\right]}{2(\lambda_1\mu_1{-}\lambda_2\mu_2)(\alpha_2\lambda_1\mu_2 (\lambda_2{+}\mu_1){-}\beta_2\lambda_2\mu_1(\lambda_1{+}\mu_2))}.
 \]
With some calculations, we have
 \[
 C_0{-}W_0=\frac{\alpha_2\lambda_1\mu_2((\lambda_2{+}\mu_1)((\lambda_1{+}\mu_2)e{-}\mu_2)+\lambda_1\mu_1{-}\lambda_2\mu_2)+ \beta_2\lambda_2\mu_1(\mu_2{+}\lambda_1)((\lambda_1{+}\mu_2)f{-}\lambda_1)}{2(\lambda_1\mu_1{-}\lambda_2\mu_2)(\alpha_2\lambda_1\mu_2 (\lambda_2{+}\mu_1){-}\beta_2\lambda_2\mu_1(\lambda_1{+}\mu_2))}
 \]
 and
  \[
  W_0{-}C_2= -(\lambda_2{+}\mu_1)\mu_2
  \frac{\alpha_2((\lambda_2{+}\mu_1)(e{+}f{-}1){+}(\lambda_2{+}\mu_1)f{-}\lambda_2){+}\beta_2\lambda_2\mu_1(\lambda_1{-}(\lambda_1{+}\mu_2)f)}{2(\lambda_1\mu_1{-}\lambda_2\mu_2)(\alpha_2\lambda_1\mu_2 (\lambda_2{+}\mu_1){-}\beta_2\lambda_2\mu_1(\lambda_1{+}\mu_2))}
  \]

If Condition~\eqref{Cond2} and
\[
e>\frac{\mu_2}{\lambda_1{+}\mu_2} \text{ and } f< \frac{\lambda_2}{\lambda_2{+}\mu_1}
\]
hold, we can write
\[
f=w\frac{\lambda_1}{\lambda_1+\mu_2} +(1-w)\frac{\lambda_2}{\lambda_2{+}\mu_1},
\]
with $w{\in}(0,1)$. We have $0{<}C_2{<}C_0{<}C_1$ and $P(C_2){>}0$ and $P(C_0){>}0$. Hence there are either $0$ or $2$ roots (counting the multiplicity if $P(W_0){=}0$), of $P$ in $(C_2,C_0)$. 

It is easily checked that 
\[
\lim_{(\alpha_2,\beta_2)\to(0,1)} (W_0{-}C_2,C_0{-}W_0)=(1{-}w)\frac{\mu_2}{2(\mu_2{+}\lambda_1)}(1,1)
\]
and, with some calculations, 
\[
\lim_{(\alpha_2,\beta_2)\to(0,1)} P(W_0)=\mu_2^2\mu_1^3\lambda_2^3(1{-}w)^2(\mu_2{+}\lambda_1)^2(\lambda_1\mu_1{-}\lambda_2\mu_2)^3 <0. 
\]

Hence there exists a neighborhood $V$ of $(1,0)$ such that if $(\alpha_2,\beta_2){\in}V$, then $W_0{\in}(C_2,C_0)$ and $P(W_0)<0$. In this case there are two roots of $P$ in $(C_2,C_0)$.

The result
\[
\lim_{(\alpha_2,\beta_2)\to(1,0)} P(W_0)=\mu_2^5\lambda_1^3(\lambda_2{+}\mu_1)^3(\lambda_1{+}\mu_2)(\lambda_2\mu_2{-}\lambda_1\mu1)((\lambda_2{+}\mu_1)e{-}\mu_1)^2>0,
\]
shows that, for $(\alpha_2,\beta_2)$ in a small neighborhood of $(1,0)$: if $W_0{\in}(C_2,C_1)$ then there does not exist a root of $P$ in $(C_2,C_1)$, otherwise, if $W_0{\not\in}(C_2,C_1)$ the same conclusion holds obviously.

If Condition~\eqref{Cond2} and
\[
e<\frac{\mu_2}{\lambda_1{+}\mu_2} \text{ and } f>\frac{\lambda_2}{\lambda_2{+}\mu_1}
\]
we can write
\[
e=w\frac{\mu_2}{\lambda_1+\mu_2} +(1-w)\frac{\mu_1}{\lambda_2{+}\mu_1},
\]
with $w{\in}(0,1)$. We have $C_2{<}0{<}C_1{<}C_0$ and $P(0){<}0$ and $P(C_1){<}0$. There is either $0$ or $2$ roots of $P$ in $(0,C_1)$. 

\[
\lim_{(\alpha_2,\beta_2)\to(1,0)} (W_0,C_1{-}W_0)=w\frac{\mu_2}{2(\mu_2{+}\lambda_1)}(1,1)
\]
and
\[
\lim_{(\alpha_2,\beta_2)\to(1,0)} P(W_0)=(\lambda_2{+}\mu_1)^3(\lambda_2\mu_2{-}\lambda_1\mu_1)^3\mu_2^5\lambda_1^3w^2>0,
\]
there are two roots of $P$ in $(0,C_1)$ for $(\alpha_2,\beta_2)$ in a small neighborhood of $(1,0)$.

Finally, the relation
\[
\lim_{(\alpha_2,\beta_2)\to(0,1)} P(W_0)=\lambda_2^3\mu_1^3\mu_2^2(\lambda_2{+}\mu_1)^2(\lambda_1{+}\mu_2)^3(\lambda_1\mu_1{-}\lambda_2\mu_2)((\lambda_1{+}\mu_2)f{-}\lambda_1)^2<0,
\]
shows that there does not exist a root of $P$ in $(0,C_1)$ for $(\alpha_2,\beta_2)$ in a small neighborhood of $(0,1)$.

The proposition is proved.
\end{proof}
\printbibliography
\appendix
\section{Appendix}
\subsection{End of the Proof of Theorem~\ref{ThM1L}}\label{O1SSec}
\begin{proof}
The variables  $(X_3^N(t),A_2^N(t),B_1^N(t))$ are slow in the sense that the transition rates of positive jumps are $O(1)$. Their transitions are given by, at time $t{=}0$, 
\begin{equation}\label{O1SHEq2}
v{\longrightarrow}   v{+} \begin{cases}
\scriptstyle{e_{A_2}}, &\scriptstyle{\alpha_2x_2u_A},\\
\scriptstyle{{-}e_{A_2}}, &\scriptstyle{\alpha_2^-a_2},\\
\scriptstyle{{-}e_{B_1}}, &\scriptstyle{\mu_1b_1},\\
\scriptstyle{e_{B_1}{-}e_{A_2}}, &\scriptstyle{\lambda_2a_2},
\end{cases}
\quad
\begin{cases}
\scriptstyle{e_{X_3}}, &\scriptstyle{\lambda_2a_2{+}\beta_1^-b_1},\\
\scriptstyle{{-}e_{X_3}}, &\scriptstyle{\beta_1a_2u_B^N},
\end{cases}
\end{equation}

We will give a sketch of the proof. The missing arguments, like in the proof of a standard stochastic averaging principle, see~\citet{Kurtz1992}, are straightforward. 

Relation~\eqref{CoupO1Eq} of Section~\ref{CoupO1Sec} show that  the sequence of the  total number of jumps of these random variables in the time interval $[0,t_0)$ is tight.
The process  $(U_B^N(t)/N,t{<}t_0)$ is converging to a non-trivial process. The fact that the negative jumps of $(X_3^N(t),t{<}t_0)$ occur at a rate proportional to $U_B^N(t)$ and that the rates of positive jumps are bounded  gives that, when $X_3^N(t){>}0$, some $t{>}0$, this variable becomes $0$ right away.  This implies that, for {\em the occupation measure killed at time} $t_0$,  the process $(X_3^N(t),t{<}t_0)$ is  identically zero. This is reflected in the transitions of $(L_1(t),L_2(t))$:  a ${ AS}_2$ species is transformed directly in a ${ BS}_3$ species at rate $\lambda_2$. We have thus only to consider the process $(R^S_N(t)){=}(A_2^N(t),B_1^N(t))$.  Let $(s_k^N)$ the sequence of instants of jumps of $(R^S_N(t))$.

To conclude the proof of the theorem it is enough to show the convergence in distribution, for $G{\in}{\cal C}_c(\R_+{\times}\N^4)$, 
\begin{multline}\label{SReq0}
  \lim_{N\to+\infty} \left(\int_{s_k^N{\wedge}t_0}^{s_{k+1}^N{\wedge}t_0} G(u,(X_2^N(u),U_A^N(u), A_2^N(u),B_1^N(u)))\diff u, k{\ge}1\right)\\
  =  \left(\int_{s_k^\infty{\wedge}t_0}^{s_{k+1}^\infty{\wedge}t_0}\int_{\N^2} G(u,(x_2,u_A,R^S_\infty(u)))\pi_{C(u)}(\diff x_2,\diff u_A)\diff u,k{\ge}1\right),
\end{multline}
where $(R^S_\infty(t))$ is a Markov process on $\N^2$ with the transitions~\eqref{SlowTrans} and $(s_k^\infty)$ is the sequence of its instants of jumps.  

Since the Poisson processes ${\cal P}_{\kappa},\kappa{\in}\{\alpha_2,\alpha_2^-,\mu_1\}$, are independent of the other Poisson processes ${\cal P}_\kappa$, we have
  \[
  \P\left(s_1^N{\ge}t \mid {\cal F}_t\right)=
  \exp\left(-((\lambda_2+\alpha_2^-)a_2{+}\mu_1b_1)t-\alpha_2\int_0^t U_A^N(u)X_2^N(u)\diff u\right).
  \]
  This can be rephrased as follows: Let $E_1$ be an exponentially distributed random variable with parameter $1$ independent of ${\cal P}_\kappa$, $\kappa{\in}{\cal R}_P$, if
  \[
 \widetilde{s}_1^N=\inf\left\{t{>}0: ((\lambda_2+\alpha_2^-)a_2{+}\mu_1b_1)t{+}\alpha_2\int_0^t U_A^N(u)X_2^N(u)\diff u=E_1\right\},
  \]
  then $\widetilde{s}_1^N{\steq{dist}}s_1^N$. 

With the criterion of the modulus of continuity, see Theorem~7.3 of~\citet{Billingsley}, the convergence of the sequence of random measures $(\Xi_N^{t_0})$, see Relation~\eqref{XiN},  gives the following convergence in distribution 
\begin{multline*}
  \lim_{N\to+\infty}  \left(\int_0^{t} G(u,X_2^N(u), U_A^N(u))\diff u,t{<}t_0\right)
  \\=\left(\int_0^{t}\int_{\N^2} G(u,(x_2, u_A))\pi_{C(t)}(\diff x_2, \diff u_A)\diff u,t{<}t_0\right),
\end{multline*}
  for any $G{\in}{\cal C}_c(\R_+{\times}\N^4)$ and,  consequently,
\begin{multline}\label{SReq1}
  \lim_{N\to+\infty}  \left( \left(\int_0^{t} U_A^N(u)X_2^N(u))\diff u,\int_0^{t} G(u,X_2^N(u), U_A^N(u))\diff u\right),t{<}t_0\right)
\\  =\left(\left(r_L(t),\int_0^{t}\int_{\N^2} G(u,(x_2, u_A))\pi_{C(u)}(\diff x_2, \diff u_A)\diff u\right),t{<}t_0\right),
\end{multline}
where $(r_L(t))$ is defined by Relation~\eqref{SlowTrans}. 

Note that,
\begin{multline*}
  \int_{0}^{s_{1}^N{\wedge}t_0}\hspace{-6mm} G(u,(X_2^N(u),U_A^N(u), A_2^N(u),B_1^N(u)))\diff u
\\  = \int_{0}^{s_{1}^N{\wedge}t_0}\hspace{-6mm} G(u,(X_2^N(u),U_A^N(u), a_2,b_1))\diff u,
\end{multline*}
 Relation~\eqref{SReq1} gives the convergence in distribution,
\begin{multline}\label{SReq2}
 \lim_{N\to+\infty} \left( \int_{0}^{\widetilde{s}_{1}^N{\wedge}t_0}\hspace{-6mm} G(u,(X_2^N(u),U_A^N(u), a_2,b_1))\diff u,\right)
\\ = \left( \int_{0}^{S_1{\wedge}t_0}\int_{\N^2}\hspace{-3mm} G(u,(x_2,u_A, a_2,b_1))\pi_{C(u)}(\diff x_2,\diff u_A) \diff u,\right)
\end{multline}
 with
 \[
 S_1=\inf\left\{t{>}0: (\lambda_2a_2{+}\mu_1b_1)t{+}\alpha_2 r_L(t) =E_1\right\}\steq{dist} s_1^\infty.
 \]
 The right-hand side of Relation~\eqref{SReq2} has therefore the same distribution as
 \[
  \left( \int_{0}^{s_1^\infty{\wedge}t_0}\int_{\N^2}\hspace{-3mm} G(u,(x_2,u_A, R_\infty^S(u)))\pi_{C(u)}(\diff x_2,\diff u_A) \diff u,\right).
  \]
  We have thus proved the convergence~\eqref{SReq0} for $k=0$. From there we use the strong Markov property of the process $(V_N(t))$ for the stopping time $s_1^N$ and proceed as in the first step, by taking care of an additional aspect: The values $(X_2^N(s_1^N),U_A^N(s_1^N))$  are not, a priori, ``fixed'' as in the statement of the theorem, nevertheless we can use the same argument as in the proof of Proposition~\ref{StabProp} for the location of $V_N^H(s_1)$ by using a finite set $K_1$ and a convenient (small) stopping time. Based on these arguments, the proof can be concluded with an induction scheme on the sequence $(s_k^N)$.
\end{proof}
\subsection{Weight on the Right}\label{M1RSec}
This section is just a quick re-formulation of the results of Section~\ref{M1LSec} obtained by symmetry. 
\begin{theorem}\label{ThM1R}
Under the condition $e{+}f{<}1$, if $H{=}\{  { S}_1, { S}_2,  { AS_1},{ B},  { BS_2}\}$ and the initial condition is 
\[
V_N^H(0)=(x_1,x_2,a_1,u_B,b_2)\in\N^5, \quad V_N^{H^c}(0)=(x_3^N,u_A^N,a_2^N,b_1^N)\in\N^4,
\]
such that
\[
\lim_{N\to+\infty}\frac{V_N^{H^c}(0)}{N}=(1{-}f{-}a_2^0,e{-}a_2^0,a_2^0,f),
\]
with $a_2^0{\in}(0,e)$, then there exists $t_0{>}0$ such that,  for the convergence in distribution
\begin{multline}\label{CVWR}
\lim_{N\to+\infty}\left(\left(\left(\frac{v_N^{H^c}(t)}{N}\right),t{<}t_0\right),\Lambda^{H,t_0}_N\right)\\ =\left((v(t),t{<}t_0)),\Lambda^{t_0}_{\infty}\right)=
\left(\left(\left(1{-}f{-}{a}_2(t),e{-}{a}_2(t),{a}_2(t),f\right),t{<}t_0\right),\Lambda^{t_0}_\infty\right),
\end{multline}
where 
\[
a_2(t)=a_2^0\exp(-\lambda_2t)+\frac{\mu_1 f}{\lambda_2}\left(1-\exp(-\lambda_1t)\right),
\]
 and, for $F{\in}{\cal C}_c(\R_+{\times}\N^5)$, almost surely the relation
\begin{multline*}
\int_{\R_+{\times}\N^5} F(s,\underbrace{(x_1,x_2,a_1,u_B,b_2)}_{v^H})\Lambda_{\infty}(\diff s, \diff v^H)\\ =\int_{\R_+{\times}\N^3} F(s,(0,x_2,L_2(s),u_B,L_1(s)))\pi_{C(s)}(\diff x_2,\diff u_B)\diff s
\end{multline*}
holds, where, for $C{\in}(0,{+}\infty)^5$,  $\pi_{C}$ is the invariant measure of Proposition~\ref{MMNet}, and, for $t{\ge}0$, 
\[
C(t){=}\left(\rule{0mm}{5mm}\mu_1f,\alpha_2^-{a}_2(t),\alpha_2(e{-}{a}_2(t)),\beta_1^-f,\beta_1(1{-}f{-}{a}_2(t))\right)
\]
The stochastic processes $(L_1(t)),L_2(t))$ is associated to a non-homogeneous network of $M/M/\infty$ queues with initial state $(a_2,b_1)$, in state $\ell{=}(\ell_1,\ell_2)$ at time $t$, its transition rates are 
  \[
    \ell{\longrightarrow}   \ell {+} \begin{cases}
\scriptstyle{e_{1}}, &\scriptstyle{\beta_2r_R(t)},\\
\scriptstyle{{-}e_{1}}, &\scriptstyle{\beta_2^-\ell_1},\\
\scriptstyle{{-}e_{2}}, &\scriptstyle{\lambda_1\ell_2},\\
\scriptstyle{e_{2}{-}e_{1}}, &\scriptstyle{\mu_2\ell_1},
   \end{cases}
\qquad   \text{ with }   r_R(t)=\int_{\N^2} x_2u_B\pi_{C(t)}(\diff x_2,\diff u_B).
\]

\end{theorem}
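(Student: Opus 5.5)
The proof mirrors that of Theorem~\ref{ThM1L}, with the roles of the enzymes exchanged: $A\leftrightarrow B$, $S_1\leftrightarrow S_3$, $AS_1\leftrightarrow BS_3$, $AS_2\leftrightarrow BS_2$, $\lambda_i\leftrightarrow\mu_i$, $\alpha_i\leftrightarrow\beta_i$. Let
\[
\tau_N=\inf\{t>0:\min(X_3^N(t),U_A^N(t),A_2^N(t),B_1^N(t))<\eps N\}.
\]
The first step is to bound the free enzyme count $U_B^N(t)$ from above by an $M/M/\infty$ queue $W_B^N(t)=Q_0(Nt)$ up to time $\tau_N$. This queue has input rate $2\delta_0$, with $\delta_0=\max(\beta_1^-,\mu_1,\beta_2^-,\mu_2)$, and service rate $\eps\beta_1$, since free $B$ is consumed at rate $\beta_1X_3^NU_B^N\ge\eps\beta_1NU_B^N$. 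As in Section~\ref{CoupX1Sec}, this coupling together with the ergodic theorem gives a linear lower bound $X_3^N(t)/N\ge x_3^0-2\delta_0t$.

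Next, the pair $(X_2^N,U_B^N)$ is dominated by the network of $M/M/\infty$ queues of Proposition~\ref{MMNet}, with the joint input $\mu_1$ (the reaction $BS_3\to B+S_2$), extra $U_B$ input $\beta_1^-+\beta_2^-+\mu_2$, extra $X_2$ input $\lambda_1+\alpha_2^-$, and services $\eps\beta_1$ and $\eps\alpha_2$. The slow $O(1)$ variables $(X_1^N,A_1^N,B_2^N)$ are bounded by $x_1+a_1+b_2$ plus the number of $\beta_2$-reactions, which is controlled in expectation by the ergodic theorem for this network, exactly as in Relation~\eqref{CoupO1Eq}. The species $B_1^N$ and $A_2^N$ are bounded from below by pure-death $M/M/\infty$ queues, and Lemma~\ref{MMILower} keeps them of order $N$ on a short interval. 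For $U_A^N$, the losses through $\alpha_1$ and $\alpha_2$ are respectively $O(1)$ and linearly bounded, as for Relation~\eqref{EqCoupUB}. Together these yield $t_0>0$ with $\P(\tau_N\ge t_0)\to1$.

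Tightness of $\Lambda_N^{H,t_0}$ then follows from Lemmas~1.3--1.4 of~\citet{Kurtz1992}, and Proposition~\ref{MMIHit} gives that all $H$-coordinates divided by $N$ vanish. The only nontrivial slow variable is $\overline{A}_2^N$. Its martingale decomposition is
\[
\overline{A}_2^N(t)=\overline{a}_2^N+M_N(t)+\alpha_2\int_0^t\overline{U}_A^N X_2^N\,\diff s-(\lambda_2+\alpha_2^-)\int_0^t\overline{A}_2^N\,\diff s,
\]
and $B_1^N/N\to f$ because nearly all of $B$ is bound. Conservation then gives the form of the limit in~\eqref{CVWR}.

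The remaining identification is the averaging step. Itô's formula for $f(X_2^N,U_B^N)$, divided by $N$, shows that the fast occupation measure converges to $\pi_{C(s)}$ with $C(s)$ as stated. By Relation~\eqref{MMIExp}, the mean of the $X_2$-coordinate is $(\mu_1f+\alpha_2^-a_2)/(\alpha_2(e-a_2))$, so the drift of $a_2$ becomes $\mu_1f-\lambda_2a_2$. This linear ODE gives the stated $a_2(t)$; note that the exponential in the second term should read $\exp(-\lambda_2t)$, not $\exp(-\lambda_1t)$. Finally, the slow $O(1)$ pair $(B_2^N,A_1^N)$ is handled as in Section~\ref{O1SSec}. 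One shows $X_1^N$ vanishes in occupation measure, because it is depleted at rate $\alpha_1U_A^N=\Theta(N)$. The jumps are then matched through exponential clocks to the inhomogeneous network driven by $\beta_2r_R(t)$, using the strong Markov property and induction on the jump times.

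The main obstacle is building these couplings consistently on the same Poisson space up to $\tau_N$, and in particular the lower bound on $U_A^N$.
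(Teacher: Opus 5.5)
Your proposal is correct and is essentially the paper's own argument. The paper gives no separate proof of Theorem~\ref{ThM1R}; it obtains it by symmetry from the proof of Theorem~\ref{ThM1L} ($A\leftrightarrow B$, $S_1\leftrightarrow S_3$, $AS_i\leftrightarrow BS_{4-i}$, $\alpha_i\leftrightarrow\beta_i$, $\alpha_i^-\leftrightarrow\beta_i^-$, $\lambda_i\leftrightarrow\mu_i$), which is exactly the mirrored coupling, tightness and averaging scheme you describe. Your correction of $\exp(-\lambda_1 t)$ to $\exp(-\lambda_2 t)$ is right, since the averaged drift is $\mu_1 f-\lambda_2 a_2$; the one slip in your sketch, inherited from the paper's Section~\ref{CoupX1Sec}, is harmless: $X_3^N$ is depleted at rate $\beta_1X_3^NU_B^N\le\beta_1 N U_B^N$ (not $\eps\beta_1 N U_B^N$), so the linear lower bound is $x_3^0-Ct$ with $C$ of order $\delta_0/\eps$ rather than $2\delta_0$, which still yields some $t_0>0$.
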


\begin{corollary}
Under the conditions of Theorem~\ref{ThM1R} if $\mu_1f{<}\lambda_2 e$ then  the subset $H{=}\{  { S}_1, { S}_2,  { AS_1},{ B},  { BS_2}\}$ is stable for the sequence of Markov processes $(V_N(t))$. 
\end{corollary}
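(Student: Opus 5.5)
The plan is to reproduce the argument of the corollary following Theorem~\ref{ThM1L}, with the roles of the two enzyme species exchanged, and to invoke Proposition~\ref{StabProp}. Take $O_P$ to be the open set of points of the form $(1{-}f{-}a_2^0,e{-}a_2^0,a_2^0,f)$ with $a_2^0\in(0,e)$, understood up to the conservation relations~\eqref{eqCons1} and~\eqref{eqCons2}. The coordinates are those of $(X_3^N,U_A^N,A_2^N,B_1^N)$ scaled by $N$.

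\textbf{Hypothesis~(1) of Proposition~\ref{StabProp}.} Theorem~\ref{ThM1R} gives this hypothesis on some interval $[0,t_0)$. The occupation measure $\Lambda_\infty^{t_0}$ is built from the invariant distributions $\pi_{C(s)}$ and the process $(L_1(s),L_2(s))$. Both are probability distributions on $\N^{|H|}$ for each $s$, so the marginal $\Lambda_\infty^{t_0}(\diff t,\N^{|H|})$ is Lebesgue measure on $[0,t_0]$, as required.

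\textbf{Hypothesis~(2): the equilibrium.} The limiting dynamics reduce to the scalar linear ODE
\[
\dot a_2(t)=\mu_1 f-\lambda_2 a_2(t),
\]
which is the analogue of the equation for $b_2$ in the proof of Theorem~\ref{ThM1L}. Here $\mu_1 f$ is the rate at which $BS_3$ complexes, now $O(N)$ with $b_1\equiv f$, release $S_2$, which is then immediately captured by free $A$ enzymes. The term $\lambda_2 a_2$ is the break-up rate of $AS_2$.

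Its unique equilibrium is $a_2^*=\mu_1f/\lambda_2$, and it is exponentially attracting with rate $\lambda_2$. The assumption $\mu_1 f<\lambda_2 e$ gives exactly $a_2^*\in(0,e)$. Since $e{+}f<1$, the other coordinates of the equilibrium satisfy
\[
1-f-a_2^*>1-f-e>0\quad\text{and}\quad e-a_2^*>0,
\]
so the equilibrium lies in $O_P$ and is stable. In fact every solution started in $O_P$ stays in $O_P$, because $a_2(t)$ stays between $a_2^0$ and $a_2^*$, both of which lie in $(0,e)$.

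\textbf{Main obstacle: $t_0$ must not degenerate.} The value of $t_0$ supplied by Theorem~\ref{ThM1R} depends on the distance of the slow variables to the boundary, through the $\eps$ in the stopping time $\tau_N$. I will argue as in the corollary to Theorem~\ref{ThM1L}: along the limiting trajectory, $a_2(t)$ is monotone between $a_2^0$ and $a_2^*$. Hence all slow coordinates stay uniformly bounded away from $0$ and from the boundary on the whole time axis. One can therefore fix $\eps$ once, valid for the entire trajectory, and this gives a uniform $t_0$. The restart argument of Proposition~\ref{StabProp} via the strong Markov property can then be iterated indefinitely, which yields the stability of $H$.
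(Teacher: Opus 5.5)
Your proposal is correct and takes essentially the paper's route. The paper gives no separate proof for this corollary, but it is the mirror image of the proof of the corollary to Theorem~\ref{ThM1L}: apply Proposition~\ref{StabProp}, and use that $a_2(t)$ converges to $\mu_1 f/\lambda_2\in(0,e)$, so the slow coordinates stay away from the boundary and $t_0$ can be taken arbitrarily large. Your ODE $\dot a_2=\mu_1 f-\lambda_2 a_2$ is the right one (the exponent $-\lambda_1 t$ in the displayed formula of Theorem~\ref{ThM1R} is a typo for $-\lambda_2 t$), and your monotonicity and uniform-$\eps$ remark just spells out what the paper states in one line.
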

\subsection{Maple Code}\label{MapCod}

\begin{verbatim}
#General Maple Code

a2:=e-a1:
b2:=f-b1:
y:=1-e-f-x-z:
mA:=normal(((lambda1+alp1m)*a1+(lambda2+alp2m)*a2)/(alpha1*x+alpha2*y)):
mB:=normal(((mu1+bet1m)*b1+(mu2+bet2m)*b2)/(beta1*z+beta2*y)):

F1 := proc(x,z,a1,b1) normal(mu2*b2+alp1m*a1-alpha1*mA*x) end:
F2  := proc(x,z,a1,b1) normal(lambda2*a2+bet1m*b1-beta1*mB*z) end:
F3 :=  proc(x,z,a1,b1) normal(alpha1*mA*x-(lambda1+alp1m)*a1) end:
F4 :=  proc(x,z,a1,b1) normal(beta1*mB*z-(mu1+bet1m)*b1) end:

W:=solve({F1(x,z,a1,b1)=0,F2(x,z,a1,b1)=0,F3(x,z,a1,b1)=0,
                                    F4(x,z,a1,b1)=0},{x,z,a1,b1}):
ai := subs(W[1],a1): 
bi := subs(W[2],b1): 
xi := subs(W[3],x): 
zi := subs(W[4],z): 

# Jacobian matrix at equilibrium
with(LinearAlgebra):
with(VectorCalculus)

A:=Jacobian([F1(x,z,a1,b1), F2(x,z,a1,b1), F3(x,z,a1,b1),
               F4(x,z,a1,b1)],[x,z,a1,b1]=[xi,zi,ai,bi]):

# Change of variables (I)
B := map(normal, subs(e=u/(v+u*(1+v)),f=u*v/(v+u*(1+v)), A)):

# Change of variables (II)

#Underloaded case
lambda2:=mu1*f/e/(1+a):
lambda1:=mu2*(1+b)*f/e:

#overloaded case 
lambda2:=(1+b)*mu1*f/e:
lambda1:=mu2*f/e/(1+a):

# Characteristic Polynomial 
P := proc(x) map( factor,
          taylor( numer(CharacteristicPolynomial(B, x)),x)) end:

#  Coeficients of P
C0:=normal(coef(P(x), x, 4)):
C1:=normal(coef(P(x), x, 3)):
C2:=normal(coef(P(x), x, 2)):
C3:=normal(coef(P(x), x, 1)):
C4:=normal(coef(P(x), x,0)):

# Routh-Hurwitz Coeficients
R1:=C0:
R2:=C1:
R3:=C4:
R4:= normal((C2*C1 - C0*C3)/C1):
R5:= normal(R4*C3-C4*C1):
\end{verbatim}

\end{document}